\documentclass[a4paper,11pt]{article}
\usepackage{amsmath}
\usepackage{amsthm}
\usepackage{amsfonts}
\usepackage{amssymb}
\usepackage[latin1]{inputenc}
\usepackage[left=2.5cm,top=2.5cm,right=2.5cm,bottom=2.5cm]{geometry}

\newtheorem{lem}{Lemma}[section]
\newtheorem{thm}{Theorem}[section]
\newtheorem{co}[thm]{Corollary}
\newtheorem{pr}[thm]{Proposition} 
\numberwithin{equation}{section}

\begin{document}
\title{  Convergence in $L^p$ and its exponential rate  for a branching process in a random environment }
\author{Chunmao HUANG$^{a}$, Quansheng LIU$^{b,}$\footnote{Corresponding author at:  LMBA, UMR 6205, Universit\'e de Bretagne-Sud, Campus de Tohannic,
BP 573, 56017 Vannes, France.  \newline \indent \ \ Email addresses: cmhuang@hitwh.edu.cn (C. Huang), quansheng.liu@univ-ubs.fr (Q. Liu).}
\\
\small{\emph{$^{a}$Department of mathematics, Harbin institute of technology at Weihai, Weihai, 264209, China}}\\
\small{\emph{$^{b}$LMBA, Universit\'e de Bretagne-Sud, Campus de Tohannic,
BP 573, 56017 Vannes, France}}}
\maketitle

\begin{abstract}
We consider a supercritical branching process $(Z_n)$ in a random
environment $\xi$. Let $W$ be the limit of the normalized population size
$W_n=Z_n/\mathbb{E}[Z_n|\xi]$.  We first show a necessary  and sufficient condition for the quenched $L^p$ ($p>1$) convergence of $(W_n)$, which completes the known result for the annealed $L^p$ convergence. We then show that the convergence rate is exponential, and we find  the maximal value of $\rho>1$ such that $\rho^n(W-W_n)\rightarrow 0$  in $L^p$, in both quenched and annealed sense. Similar results are also shown for a branching process in a varying environment.
\\*

\emph{Key words:} branching process, varying environment,  random
environment, moments, exponential convergence rate, $L^p$ convergence

\emph{AMS subject classification:} 60K37, 60J80

\end{abstract}

\section{Introduction and main results}
We consider a branching process in a random environment (BPRE). Let $\xi=(\xi_0, \xi_1, \xi_2,\cdots)$ be a stationary and ergodic process taking values in some space $\Theta$. Each realization of $\xi_n$ corresponds to a probability distribution on $\mathbb N_0=\{0,1, 2,\cdots\}$, denoted by $p(\xi_n)=\{p_i(\xi_n):i\in\mathbb N_0\}$, where
$$
 \text{$p_i(\xi_n)\geq0$, \quad$\sum_ip_i(\xi_n)=1$\quad and \quad$\sum_iip_i(\xi_n)\in(0,\infty)$. }
$$
The sequence $\xi=(\xi_n)$ will be called \emph{random environment}. A branching process $(Z_n)$ in the  random environment $\xi$ is a class of branching processes in varying environment indexed by $\xi$. By definition,
\begin{equation}
Z_0=1,\qquad Z_{n+1}=\sum_{i=1}^{Z_n}X_{n,i}\quad (n\geq0),
\end{equation}
where $X_{n,i} (i=1,2,\cdots)$ denotes the number of offspring of the
$i$th particle in the $n$th generation.  Given $\xi$, $\{X_{n,i}:n\geq0,i\geq1\}$ is a family of (conditionally)  independent
random variables and each $X_{n,i}$ has distribution $p(\xi_n)$ on $\mathbb N_0=\{0,1,\cdots\}$.

Let $(\Gamma, \mathbb{P}_\xi)$ be the probability space under which the
process is defined when the environment $\xi$ is fixed.  As usual,
$\mathbb{P}_\xi$ is called \emph{quenched law}.
The total probability space can be formulated as the product space
$( \Theta^{\mathbb{N}_0}\times\Gamma , \mathbb{P})$,
 where $ \mathbb{P} = \tau\otimes \mathbb{P}_{\xi} $ in the sense  that for all measurable and
 positive $g$, we have
  $$\int g d\mathbb{P} = \int_{\Theta^\mathbb{N }} \int_\Gamma g(\xi,y) d\mathbb{P}_{\xi}(y) d \tau  (\xi),$$
  where $\tau$ is the law of the environment $\xi$.  The total
probability $\mathbb{P}$ is usually called \emph{annealed law}.  The quenched law $\mathbb{P}_\xi$ may be considered to be the conditional
probability of $\mathbb{P}$ given $\xi$.

Let $\mathcal {F}_0=\sigma(\xi_0,\xi_1,\xi_2,\cdots)$ and $\mathcal
{F}_n=\sigma(\xi_0,\xi_1,\xi_2,\cdots,X_{k,i},\;0\leq k\leq
n-1,\;i=1,2,\cdots)$ be the $\sigma$-field generated by  $X_{k,i}$ ($0\leq k\leq n-1,\;i=1,2,\cdots$), so that $Z_n$ are $\mathcal {F}_n$-measurable.  For $n\geq0$ and $p\geq1$, set
\begin{equation}\label{CRlp1.1}
m_n(p)=\sum_ii^pp_i(\xi_n),\quad m_n=m_n(1),
\end{equation}
and
\begin{equation}\label{CRlp1.2}
P_0=1, \qquad P_n=\prod_{i=0}^{n-1}m_i \;\;(n\geq 1).
\end{equation}
So $m_n(p)=\mathbb{E}_\xi X_{n,i}^p$ and $P_n=\mathbb{E}_\xi Z_n$. It is well known that the normalized population size
\begin{equation}
W_n=\frac{Z_n}{P_n}
\end{equation}
 is a non-negative martingale with respect to $\mathcal F_n$ both under $\mathbb{P}_\xi$ for every $\xi$
and under $\mathbb{P}$,   hence the limit
\begin{equation}W=\lim_{n\rightarrow\infty}W_n\end{equation}
exists almost surely (a.s.) with $\mathbb{E}W\leq1$ by Fatou's lemma. Assume throughout the paper that  the process is supercritical in the sense that $\mathbb{E}\log m_0$ is well defined with
$$\mathbb{E}\log m_{0}>0. $$
We are interested in the  $L^p$ convergence rate of $W_n$ both in the quenched sense (under $\mathbb{P}_\xi$)  and  in the annealed sense (under $\mathbb{P}$).
\\*

 We first show a criterion for the quenched $L^p$ convergence of $W_n$.
\begin{thm}[Quenched $L^p$ convergence]\label{CRT2.1.3}
Let $ p>1$.  Consider the following assertions:
\begin{equation*}
\begin{array}{ll}
(i) \;\text{$\mathbb{E}\log \mathbb{E}_\xi\left(\frac{Z_1}{m_0}\right)^p<\infty$}; & (ii) \;\sup_n\mathbb{E}_\xi W_n^p<\infty \;a.s.; \\
(iii) \;\text{$W_n\rightarrow W$ in
$L^p$ under $\mathbb{P}_\xi$ for almost all $\xi$}; & (iv) \;0<\mathbb{E}_\xi W^p<\infty\; a.s..
\end{array}
\end{equation*}
Then the following implications hold: (i) $\Rightarrow$ (ii) $\Leftrightarrow$ (iii) $\Leftrightarrow$ (iv). If additionally $(\xi_n)$ are i.i.d. and $\mathbb{E}\log m_{0}<\infty$, then all the four assertions   are equivalent.
\end{thm}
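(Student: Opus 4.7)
The plan is to establish the implications in order, starting with the easy ones.

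The equivalence $(ii)\Leftrightarrow(iii)$ is a direct consequence of Doob's $L^p$ maximal inequality applied to the non-negative $\mathbb{P}_\xi$-martingale $(W_n)$: the bound $\mathbb{E}_\xi\sup_n W_n^p\le (p/(p-1))^p\sup_n\mathbb{E}_\xi W_n^p$ turns (ii) into a dominated-convergence argument delivering (iii), while the converse is trivial. The implication $(iii)\Rightarrow(iv)$ is immediate, since $L^p$ convergence yields $\mathbb{E}_\xi W^p=\lim_n\mathbb{E}_\xi W_n^p<\infty$ and $\mathbb{E}_\xi W=\lim_n\mathbb{E}_\xi W_n=1$, which forces $\mathbb{E}_\xi W^p>0$.

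For $(iv)\Rightarrow(ii)$, I plan to exploit the branching recursion
\[
W=\frac{1}{m_0}\sum_{i=1}^{Z_1}W^{(i)},
\]
where, under $\mathbb{P}_\xi$, the variables $W^{(i)}$ are independent copies of $W$ with law $\mathbb{P}_{T\xi}$ and are independent of $Z_1$. Taking $\mathbb{E}_\xi$ yields $\mathbb{E}_\xi W=\mathbb{E}_{T\xi}W$, so by ergodicity of the shift the map $\xi\mapsto\mathbb{E}_\xi W$ is $\tau$-a.s.\ equal to a constant $c\in[0,1]$. The Kesten--Stigum-type dichotomy for supercritical BPRE forces $c\in\{0,1\}$, and (iv) rules out $c=0$ (which would mean $W=0$ $\mathbb{P}_\xi$-a.s.). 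Hence $\mathbb{E}_\xi W=1$, the martingale $(W_n)$ is $\mathbb{P}_\xi$-uniformly integrable with $W_n=\mathbb{E}_\xi[W\mid\mathcal{F}_n]$, and conditional Jensen gives $\mathbb{E}_\xi W_n^p\le\mathbb{E}_\xi W^p<\infty$, which is (ii).

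The technically central step $(i)\Rightarrow(ii)$ I plan to handle by a Burkholder--Davis--Gundy inequality applied to the quenched martingale $W_n-1=\sum_{k=0}^{n-1}D_k$ with differences $D_k=P_{k+1}^{-1}\sum_{i=1}^{Z_k}(X_{k,i}-m_k)$. For $p\in(1,2]$, subadditivity of $x\mapsto x^{p/2}$ reduces BDG to $\mathbb{E}_\xi|W_n-1|^p\le C_p\sum_k\mathbb{E}_\xi|D_k|^p$, and the von Bahr--Esseen inequality on the conditionally i.i.d.\ centered increments, combined with $\mathbb{E}_\xi Z_k=P_k$ and $P_{k+1}=m_k P_k$, yields
\[
\mathbb{E}_\xi|D_k|^p\le\frac{C'_p\,\mathbb{E}_{T^k\xi}|Z_1/m_0-1|^p}{P_k^{\,p-1}}.
\]
Assumption (i) and Birkhoff's theorem force $\log\mathbb{E}_{T^k\xi}(Z_1/m_0)^p=o(k)$ $\tau$-a.s., while $(\log P_k)/k\to\mathbb{E}\log m_0>0$, so the summands decay geometrically and the series converges. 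For $p>2$ the same scheme works with Rosenthal's inequality in place of von Bahr--Esseen; the additional square-function term is absorbed using the $p=2$ analysis already in hand.

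Finally, for $(ii)\Rightarrow(i)$ under the i.i.d.\ hypothesis with $\mathbb{E}\log m_0<\infty$, I would reverse the recursion: conditional Jensen applied to the i.i.d.\ sum $\sum_{i=1}^k W^{(i)}$ given $Z_1=k$ yields the lower bound $\mathbb{E}_\xi W^p\ge(\mathbb{E}_{T\xi}W)^p\,\mathbb{E}_\xi(Z_1/m_0)^p$; since $\mathbb{E}_{T\xi}W=1$ by the foregoing, this simplifies to $\mathbb{E}_\xi(Z_1/m_0)^p\le\mathbb{E}_\xi W^p$, and taking $\log$ and $\tau$-expectations delivers (i), contingent on $\tau$-integrability of $\log\mathbb{E}_\xi W^p$, which one extracts from iterating the Jensen upper bound $\mathbb{E}_\xi W^p\le\mathbb{E}_\xi(Z_1/m_0)^p\,\mathbb{E}_{T\xi}W^p$ against the i.i.d.\ independence and $\mathbb{E}\log m_0<\infty$. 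I expect the two principal obstacles to be the Kesten--Stigum dichotomy underpinning $(iv)\Rightarrow(ii)$, and the treatment of $p>2$ in $(i)\Rightarrow(ii)$, where the one-step von Bahr--Esseen bound is no longer sharp and a Rosenthal-type decomposition plus bootstrapping in $p$ is required.
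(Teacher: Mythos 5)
Your proposal tracks the paper closely on $(ii)\Leftrightarrow(iii)\Leftrightarrow(iv)$ and on $(i)\Rightarrow(ii)$, but the converse $(ii)\Rightarrow(i)$ under the i.i.d.\ hypothesis has a genuine gap. You plan to take the conditional-Jensen lower bound $\mathbb{E}_\xi(Z_1/m_0)^p\le\mathbb{E}_\xi W^p$, and to establish $\tau$-integrability of $\log\mathbb{E}_\xi W^p$ by iterating the matching upper bound $\mathbb{E}_\xi W^p\le\mathbb{E}_\xi(Z_1/m_0)^p\,\mathbb{E}_{T\xi}W^p$. That extraction does not close: taking logs and $\tau$-expectations of the upper bound, and using that $\log\mathbb{E}_{T\xi}W^p$ has the same law as $\log\mathbb{E}_\xi W^p$, only yields the trivial $0\le\mathbb{E}\log\mathbb{E}_\xi(Z_1/m_0)^p$, and iterating along the shift produces $\sum_{k<n}\log\mathbb{E}_{T^k\xi}(Z_1/m_0)^p$, whose possible divergence is exactly what is at stake. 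The two bounds you have sandwich $\log\mathbb{E}_\xi W^p$ between $\log\mathbb{E}_\xi(Z_1/m_0)^p$ and that same quantity plus a copy of $\log\mathbb{E}_\xi W^p$, so the argument is circular: there is no way, from Jensen alone, to deduce $\mathbb{E}\log\mathbb{E}_\xi W^p<\infty$ from $\mathbb{E}_\xi W^p<\infty$ a.s. The paper breaks the circle with a genuine perpetuity theorem (its Lemma 5.1(a), due to Grintsevichyus): for an i.i.d.\ random series $\sum_n\alpha_0\cdots\alpha_{n-1}\beta_n$ with $\mathbb{E}\log\alpha_0\in(-\infty,0)$, a.s.\ convergence \emph{forces} $\mathbb{E}\log^+\beta_0<\infty$. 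Applied with $\alpha_k=m_k^{1-p}$ (or $m_k^{-s-p/2}$) and $\beta_k=\bar m_k(p)$, after routing $(ii)$ through the BPVE necessary conditions, this is what delivers $\mathbb{E}\log^+\mathbb{E}_\xi|Z_1/m_0-1|^p<\infty$. That lemma is a nontrivial converse to Cauchy's root test for random series and has no elementary Jensen substitute; this is also where the hypothesis $\mathbb{E}\log m_0<\infty$ enters.

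Two smaller remarks. In $(iv)\Rightarrow(ii)$ you invoke a Kesten--Stigum-type dichotomy to force $\mathbb{E}_\xi W\in\{0,1\}$; the paper gets by with less: by ergodicity $\mathbb{E}_\xi W=c$ is a.s.\ constant, $(iv)$ gives $c>0$, and conditional Jensen applied to the decomposition $W=P_n^{-1}\sum_iW(n,i)$ gives $\mathbb{E}_\xi(W^p\mid\mathcal F_n)\ge c^pW_n^p$, hence $\mathbb{E}_\xi W_n^p\le c^{-p}\mathbb{E}_\xi W^p<\infty$, with no need for $c=1$ (which then falls out afterwards). Your route is valid if the dichotomy is granted for general stationary ergodic environments, but it imports a heavier result. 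In $(i)\Rightarrow(ii)$ for $p>2$, the phrase ``the square-function term is absorbed using the $p=2$ analysis already in hand'' understates what is required: the Rosenthal square-function term carries $\mathbb{E}_\xi W_k^{p/2}$, not a second moment, and one needs the induction over dyadic ranges $p\in(2^b,2^{b+1}]$ that the paper carries out; your closing remark about ``bootstrapping in $p$'' does acknowledge this.
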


It can be easily seen that  $\forall p>0$, $\mathbb{E}\log
\mathbb{E}_\xi(\frac{Z_1}{m_0})^p<\infty$ if and only if $\mathbb{E}\log^+\mathbb{E}_\xi|\frac{Z_1}{m_0}-1|^p<\infty$, where and hereafter we use the following usual notations:
$$\log^+x=\max(\log x, 0), \quad\log^-x=\max(-\log x, 0). $$

We will also use the notations
$$ a\wedge b=\min(a,b),\quad a\vee b=\max(a,b).$$

Next we  give a description of the  quenched $L^p$ convergence rate.
\begin{thm}[Exponential rate of quenched $L^p$ convergence]\label{CRC2.1.5}
Let $ p>1$,  $\rho>1$ and $m=\exp({\mathbb{E}\log m_0})>1$.
\begin{itemize}
\item[] (a) If $\mathbb{E}\log \mathbb{E}_\xi\left(\frac{Z_1}{m_0}\right)^p<\infty$, then
$$\lim_{n\rightarrow\infty}\rho^n(\mathbb{E}_\xi|W-W_n|^p)^{1/p}=0\quad a.s.\qquad\text{for}\;\rho<\min\{m^{1-1/p}, m^{1/2}\}.$$
\item[] (b) If $\mathbb{E}\log^-
\mathbb{E}_\xi\left|\frac{Z_1}{m_0}-1\right|^{p\wedge2}<\infty$ and  $\mathbb{E}\log^+
\mathbb{E}_\xi\left|\frac{Z_1}{m_0}-1\right|^{p\vee2}<\infty$, then a.s.
$$\limsup_{n\rightarrow\infty}\rho^n(\mathbb{E}_\xi|W-W_n|^p)^{1/p}\left\{\begin{array}{ll}
=0 \quad&\text{if}\;\rho<\bar{\rho}_c,\\
>0\quad &\text{if}\;\rho>\bar{\rho}_c,
\end{array}
\right.
$$
where $\bar{\rho}_c=m^{1/2}=\exp(\frac{1}{2}\mathbb{E}\log m_0)>1$.
\end{itemize}
\end{thm}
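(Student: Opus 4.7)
My plan rests on the fundamental tree decomposition
\[
W - W_n \;=\; \frac{1}{P_n}\sum_{i=1}^{Z_n}\bigl(W^{(n,i)}-1\bigr),
\]
where, conditionally on $\mathcal{F}_n$ and $\xi$, the $W^{(n,i)}$ are i.i.d.\ copies of the martingale limit constructed from the shifted environment $T^n\xi=(\xi_n,\xi_{n+1},\ldots)$, each of quenched mean $1$. Write $S_n$ for the inner sum and set $a_n=\mathbb{E}_{T^n\xi}|W-1|^p$, $b_n=\mathbb{E}_{T^n\xi}(W-1)^2$. Under the respective hypotheses, Theorem~\ref{CRT2.1.3} gives $a_n,b_n<\infty$ a.s., and stationarity of $(T^n\xi)_n$ combined with the integrability of $\log^+a_0$ and $|\log b_0|$ forces $a_n^{1/n}\to 1$ and $b_n^{1/n}\to 1$ a.s. Since $(\log P_n)/n\to\log m$ a.s.\ by Birkhoff's theorem, all subexponential-in-$n$ factors become negligible in the comparison with $\rho^n$.

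For the upper bounds in both (a) and (b), I will condition on $\mathcal{F}_n$ and apply Rosenthal's inequality when $p\geq 2$, or the von Bahr--Esseen inequality when $1<p\leq 2$, to obtain
\[
\mathbb{E}_\xi\bigl[|S_n|^p\,\big|\,\mathcal{F}_n\bigr]\;\leq\; C_p\bigl(Z_n a_n + Z_n^{p/2} b_n^{p/2}\bigr).
\]
Taking quenched expectation, using $\mathbb{E}_\xi Z_n=P_n$ together with the bound $\mathbb{E}_\xi Z_n^{p/2}\leq C(\xi)P_n^{p/2}$ (itself supplied by $L^{p\vee 2}$-boundedness of $W_n$ via Theorem~\ref{CRT2.1.3}), I will arrive at
\[
\mathbb{E}_\xi|W-W_n|^p \;\leq\; C_p P_n^{1-p} a_n + C(\xi)P_n^{-p/2}b_n^{p/2}.
\]
Taking $p$-th roots, multiplying by $\rho^n$, and sending $n\to\infty$, the two terms vanish precisely when $\rho<m^{1-1/p}$ and $\rho<m^{1/2}$ respectively, which yields (a) and the upper half of (b) with $\bar{\rho}_c=m^{1/2}$.

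For the matching lower bound in (b), the starting point is the orthogonality identity
\[
\mathbb{E}_\xi(W-W_n)^2 \;=\; P_n^{-1}b_n.
\]
If $p\geq 2$, Jensen's inequality immediately delivers $(\mathbb{E}_\xi|W-W_n|^p)^{1/p}\geq P_n^{-1/2}b_n^{1/2}$, which diverges when multiplied by $\rho^n$ with $\rho>m^{1/2}$. If $1<p<2$, Jensen goes the wrong way, and this is the step I expect to be the main obstacle, since the hypothesis in (b) gives only an $L^2$ moment (as $p\vee 2=2$) and therefore no higher moment is available for a direct H\"older interpolation. My plan is to invoke the lower half of the Burkholder--Davis--Gundy inequality applied conditionally to $S_n$,
\[
\mathbb{E}_\xi\bigl[|S_n|^p\,\big|\,\mathcal{F}_n\bigr]\;\geq\; c_p\,\mathbb{E}_\xi\Bigl[\bigl(\textstyle\sum_{i=1}^{Z_n}(W^{(n,i)}-1)^2\bigr)^{p/2}\,\Big|\,\mathcal{F}_n\Bigr],
\]
and to combine it with a conditional law of large numbers for the quadratic-variation average $\tfrac{1}{Z_n}\sum_i(W^{(n,i)}-1)^2$, which concentrates around $b_n$ as $Z_n\to\infty$ on the non-extinction event and forces the right-hand side to be of order $Z_n^{p/2}b_n^{p/2}$. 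Taking full quenched expectation and using $\mathbb{E}_\xi Z_n^{p/2}\asymp P_n^{p/2}$ will yield $\mathbb{E}_\xi|W-W_n|^p\gtrsim P_n^{-p/2}b_n^{p/2}$, completing the proof. The delicate point will be making the conditional LLN quantitative uniformly in $n$ under only the $L^2$ hypothesis, which I will handle by a truncation at a level growing slowly with $n$ so as to preserve both the concentration and the $P_n^{-p/2}$ rate.
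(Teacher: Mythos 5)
Your decomposition $W-W_n=P_n^{-1}\sum_{i=1}^{Z_n}(W^{(n,i)}-1)$ is a genuine alternative to the paper's route, which goes through the one-step differences $W_{n+1}-W_n$, the associated martingale $\hat A_n(\rho)=\sum_{k\le n}\rho^k(W_{k+1}-W_k)$, Burkholder's inequality (Lemmas \ref{CRL1.3.2}--\ref{CRL1.4.1}), and the series criterion of Lemma \ref{CRL2.2.1}. Your upper-bound argument for part (a) is sound, but there are two real gaps.

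\emph{Upper bound in (b), $p\in(1,2)$.} For $1<p\le 2$ the von Bahr--Esseen inequality gives only the first term $Z_n a_n$, so the rate you extract is governed by $P_n^{1-p}a_n$, which, raised to the $1/p$ power and compared with $\rho^n$, caps $\rho$ at $m^{1-1/p}<m^{1/2}$. Writing both terms in the display does not help: for $p<2$ one has $P_n^{1-p}\gg P_n^{-p/2}$, so the first term is the dominant (worst) one, and the sum vanishes only for $\rho<m^{1-1/p}$, not $\rho<m^{1/2}$. What you actually need here is the \emph{conditional Jensen} bound $\mathbb{E}_\xi[|S_n|^p\mid\mathcal F_n]\le(\mathbb{E}_\xi[S_n^2\mid\mathcal F_n])^{p/2}=(Z_nb_n)^{p/2}$, with the $L^2$ moment $b_n$ guaranteed by the $L^{p\vee 2}$ hypothesis, and then $\mathbb{E}_\xi Z_n^{p/2}\le P_n^{p/2}$. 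This drops the $Z_na_n$ term entirely and gives the right threshold $m^{1/2}$. (This is exactly the role of the parameter $r=2$ in the paper's Lemma \ref{CRL1.4.1}(i) and Theorem \ref{CRC5.5}(i).)

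\emph{Lower bound in (b).} The direct BDG\,+\,conditional-LLN plan has two difficulties that your sketch does not resolve. First, even if the conditional LLN can be made quantitative, your estimate is phrased in terms of $a_n=\mathbb{E}_{T^n\xi}|W-1|^p$ (or $b_n$), for which you need $a_n^{1/n}\to1$, i.e.\ $\mathbb{E}|\log a_0|<\infty$; the hypotheses of (b) control moments of $\bar X_0-1=\tfrac{Z_1}{m_0}-1$, not of $W-1$, so the required integrability of $\log a_0$ is not available for free, and establishing it would amount to a separate moment theorem. Second, making the LLN quantitative and uniform in $n$ under only the $L^2$ hypothesis is, as you acknowledge, the hard part, and a truncation argument is far from routine here. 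The paper sidesteps all of this with a short ergodicity argument: from the recursive identity $W-W_n=\tfrac1{m_0}\sum_{i\le Z_1}(W(1,i)-W_{n-1}(1,i))$, two-sided comparisons of $\mathbb{E}_\xi|W-W_n|^p$ with $\mathbb{E}_{T\xi}|W-W_{n-1}|^p$ show that the set $D=\{\xi:\lim_n\rho^n(\mathbb{E}_\xi|W-W_n|^p)^{1/p}=0\}$ is shift-invariant, hence $\mathbb{P}(D)\in\{0,1\}$; if $\mathbb{P}(D)=1$, the \emph{necessity} direction of Theorem \ref{CRC5.5} (which hinges on the two-sided bounds on $\mathbb{E}_\xi|W_{n+1}-W_n|^p$ in Lemma \ref{CRL1.4.1} and the $\log$-integrability supplied by the $\mathbb{E}\log^{\pm}$ conditions in (b)) forces $\rho\le m^{1/2}$, a contradiction when $\rho>m^{1/2}$. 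You would do well to look at this zero--one law route, or at least to replace the LLN step by the elementary power-mean lower bound $\bigl(\sum_i x_i\bigr)^{p/2}\ge Z_n^{p/2-1}\sum_ix_i^{p/2}$ (valid for $p<2$), which avoids concentration entirely but still leaves you with the $\log a_0$ integrability problem above.
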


We mention that the theorem is valid with evident interpretation even if $\mathbb{E}\log m_{0}=\infty$ (so that $m=\infty$).

 Theorem \ref{CRC2.1.5}(a) shows that $W_n\rightarrow W$ in $L^p$ under $\mathbb{P}_\xi$ at an exponential rate;  Theorem \ref{CRC2.1.5}(b) means that $\bar{\rho}_c$ is the critical value of $\rho>1$ for which $\rho^n(W-W_n)\rightarrow0$ in $L^p$ under $\mathbb{P}_\xi$ for almost all $\xi$.

For the classical  Galton-Watson process,  Theorem \ref{CRC2.1.5}(a) reduces to the result of Liu (2001, \cite{liu3}) that if $\mathbb{E}Z_1^p<\infty$, then $\rho^n(W-W_n)\rightarrow0$ in
$L^p$ for $1<\rho<\min\{m^{1-1/p},m^{1/2}\}$, where $m=\mathbb{E}Z_1\in(1,\infty)$;  Theorem \ref{CRC2.1.5}(b) can be obtained by a result of Alsmeyer,   Iksanov,  Polotsky and R\"osler (2009, \cite{IK}) on branching random walks.
\\*

Recall that for a Galton-Watson process with $m=\mathbb{E}Z_1\in(1,\infty)$ and $\mathbb{P}(W>0)>0$, Asmussen (1976, \cite{soren}) showed that for $p\in(1,2)$, $W-W_n=o(m^{-n/q})$ a.s. if and  only if $\mathbb{E}Z_1^p<\infty$, where $1/p+1/q=1$. As an application of  Theorem \ref{CRC2.1.5},  we immediately obtain the following similar result for a branching process  in a random environment.
\begin{co}[Exponential rate of a.s. convergence]\label{CRCC1}
Let $p\in(1,2)$ and $m=\exp(\mathbb{E}\log m_0)\in(0,\infty)$. If $\mathbb{E}\log \mathbb{E}_\xi(\frac{Z_1}{m_0})^p<\infty$, then $\forall \varepsilon>0$,
\begin{equation}\label{CREE1}
W-W_n=o(m^{-\frac{n}{q+\varepsilon}})\quad a.s.,
\end{equation}
where $1/p+1/q=1$.
\end{co}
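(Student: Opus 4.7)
The plan is to deduce this corollary directly from Theorem \ref{CRC2.1.5}(a) by upgrading the quenched $L^p$ decay of $W-W_n$ to a quenched almost sure decay via a Markov-inequality plus Borel--Cantelli argument, at the price of an arbitrarily small loss $\varepsilon$ in the exponent.

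First I would verify that Theorem \ref{CRC2.1.5}(a) supplies the correct rate. Since $p\in(1,2)$, the conjugate exponent $q=p/(p-1)$ satisfies $q>2$, so $1/q<1/2$ and $m^{1/q}<m^{1/2}$. Consequently $\min\{m^{1-1/p},m^{1/2}\}=m^{1-1/p}=m^{1/q}$, and Theorem \ref{CRC2.1.5}(a), together with the hypothesis $\mathbb{E}\log\mathbb{E}_\xi(Z_1/m_0)^p<\infty$, yields, for every $\rho\in(1,m^{1/q})$,
\[\rho^n(\mathbb{E}_\xi|W-W_n|^p)^{1/p}\longrightarrow 0\quad\text{a.s.}\]

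Now fix $\varepsilon>0$ and set $\rho=m^{1/(q+\varepsilon)}$, and choose an intermediate exponent $\rho'\in(\rho,m^{1/q})$, e.g.\ $\rho'=m^{1/(q+\varepsilon/2)}$. Applying the previous display at $\rho'$ produces an a.s.\ finite random variable $C(\xi)$ such that $\mathbb{E}_\xi|W-W_n|^p\le C(\xi)(\rho')^{-np}$ for all $n$. Markov's inequality under $\mathbb{P}_\xi$ then gives, for every $\delta>0$,
\[\mathbb{P}_\xi\bigl(\rho^n|W-W_n|>\delta\bigr)\le \delta^{-p}\rho^{np}\mathbb{E}_\xi|W-W_n|^p\le \delta^{-p}C(\xi)(\rho/\rho')^{np}.\]
Since $\rho/\rho'<1$, the right-hand side is summable in $n$, so the quenched Borel--Cantelli lemma delivers $\limsup_n\rho^n|W-W_n|\le\delta$, $\mathbb{P}_\xi$-a.s. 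Applying this along a countable sequence $\delta_k\downarrow 0$ yields $\rho^n(W-W_n)\to 0$ $\mathbb{P}_\xi$-a.s.\ for almost every $\xi$, and hence $\mathbb{P}$-a.s., which is exactly \eqref{CREE1} for the chosen $\varepsilon$.

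The only delicate point is the step from $L^p$ decay to almost sure decay: the gap between $\rho$ and $\rho'$, encoded in the $\varepsilon>0$, is indispensable, because with $\varepsilon=0$ the ratio $\rho/\rho'$ would equal $1$ and the Borel--Cantelli sum would diverge. This is precisely why the statement features $q+\varepsilon$ instead of the optimal $q$, and why one cannot improve on the exponent by this purely quenched argument.
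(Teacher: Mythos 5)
Your proof is correct and follows the paper's strategy: identify $\min\{m^{1-1/p},m^{1/2}\}=m^{1/q}$ for $p\in(1,2)$, invoke Theorem \ref{CRC2.1.5}(a), and then exploit the geometric gap $\rho/\rho'<1$ to upgrade the quenched $L^p$ decay to a.s.\ decay. The paper's final step is marginally slicker --- it bounds $\mathbb{E}_\xi\bigl(\sum_n\rho_1^n|W-W_n|\bigr)$ by the geometric series and concludes a.s.\ finiteness of the sum directly, avoiding Markov and Borel--Cantelli --- but this is the same mechanism dressed differently, not a different route.
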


In fact, to see the conclusion,   let $\rho_1=m^{\frac{1}{q+\varepsilon}}$ and take $\rho$ satisfying $\rho_1<\rho<m^{1/q}$. By Theorem \ref{CRC2.1.5}(a), $\rho^n(\mathbb{E}_\xi|W-W_n|^p)^{1/p}\rightarrow0$, so that
$$\mathbb{E}_\xi\left(\sum_n\rho_1^n|W-W_n|\right)\leq\sum_n\left(\frac{\rho_1}{\rho}\right)^n\rho^n(\mathbb{E}_\xi|W-W_n|^p)^{1/p}<\infty\;\;a.s..$$
Therefore the series $\sum_n\rho_1^n(W-W_n)$ converges a.s., which implies (\ref{CREE1}).
 \\*

Corollary \ref{CRCC1} has recently been shown by Huang and Liu (\cite{huang2}, 2013) by a truncating argument. The approach here is quite different.
\\*

We now turn to the annealed $L^p$ convergence of $W_n$. When the environment is $i.i.d.$, a necessary and sufficient condition was shown
by Guivarc'h and Liu (2001, \cite{gliu}, Theorem 3).

\begin{pr}[Annealed $L^p$ convergence \cite{gliu}]
Assume that $(\xi_n)$ are i.i.d. and $ p>1$. Then the following assertions are equivalent:
\begin{equation*}
\begin{array}{ll}
(i) \;\text{$\mathbb{E}\left(\frac{Z_1}{m_0}\right)^p<\infty \;\text{and}\; \mathbb{E}m_0^{1-p}<1$}; & (ii) \;\sup_n\mathbb{E}W_n^p<\infty ; \\
(iii) \;\text{$W_n\rightarrow W$ in
$L^p$ under $\mathbb{P}$}; & (iv) \;0<\mathbb{E} W^p<\infty.
\end{array}
\end{equation*}
\end{pr}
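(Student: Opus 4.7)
My plan is to establish the cycle (i)$\Rightarrow$(ii)$\Leftrightarrow$(iii)$\Rightarrow$(iv)$\Rightarrow$(i).

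The equivalence (ii)$\Leftrightarrow$(iii) is standard martingale theory: since $(W_n)$ is a non-negative $(\mathcal F_n,\mathbb P)$-martingale, Doob's maximal $L^p$-inequality identifies $L^p$-boundedness with $L^p$-convergence for $p>1$. For (iii)$\Rightarrow$(iv), $L^p$-convergence gives $\mathbb{E} W^p=\lim_n\mathbb{E} W_n^p<\infty$, while $L^1$-convergence (implied by $p>1$) gives $\mathbb{E} W=1$, hence $\mathbb{E} W^p\geq 1>0$ by Jensen.

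For (iv)$\Rightarrow$(i) I would use the fundamental branching decomposition
\begin{equation*}
W=\frac{1}{m_0}\sum_{i=1}^{Z_1}W^{(i)},
\end{equation*}
where, given $\xi$, the $W^{(i)}$ are independent copies of $W$ under the shifted environment $T\xi$, independent of $(\xi_0,Z_1)$; by i.i.d.\ of $(\xi_n)$ also $W^{(i)}\stackrel{d}{=}W$. Applying $(\sum x_i)^p\geq\sum x_i^p$ for $p\geq 1$ and non-negative $x_i$, and taking expectations, gives $\mathbb{E} W^p\geq \mathbb{E} m_0^{1-p}\cdot\mathbb{E} W^p$. Since $\mathbb{E} W^p\in(0,\infty)$, this yields $\mathbb{E} m_0^{1-p}\leq 1$, with strict inequality because $(\sum x_i)^p>\sum x_i^p$ strictly when at least two $x_i$ are positive, a positive-probability event under supercriticality. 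A parallel conditional Jensen bound $\mathbb{E}_\xi[W^p\mid Z_1]\geq m_0^{-p}(Z_1\,\mathbb{E}_{T\xi}W)^p$ forces $\mathbb{E}(Z_1/m_0)^p<\infty$, after noting that $\mathbb{E}_{T\xi}W$ is a.s.\ a positive constant by ergodicity.

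The main work is (i)$\Rightarrow$(ii). Writing $W_{n+1}-W_n=P_{n+1}^{-1}\sum_{i=1}^{Z_n}(X_{n,i}-m_n)$, for $p\in(1,2]$ I would apply the von Bahr--Esseen inequality conditionally on $\mathcal F_n$ to get
\begin{equation*}
\mathbb{E}_\xi\bigl[|W_{n+1}-W_n|^p\bigm|\mathcal F_n\bigr]\leq 2\,P_{n+1}^{-p}\,Z_n\,\mathbb{E}_\xi|X_{n,1}-m_n|^p,
\end{equation*}
then take annealed expectation. Using the i.i.d.\ environment, $\mathbb{E} P_n^{1-p}=(\mathbb{E} m_0^{1-p})^n$ while $\mathbb{E}[m_n^{-p}\mathbb{E}_\xi|X_{n,1}-m_n|^p]$ is a finite constant by (i); summing the resulting geometric series with ratio $\mathbb{E} m_0^{1-p}<1$ gives $\sum_n\mathbb{E}|W_{n+1}-W_n|^p<\infty$, which via the sub-additivity $(\sum a_k^2)^{p/2}\leq\sum|a_k|^p$ (valid since $p/2\leq 1$) and Burkholder--Davis--Gundy yields (ii). For $p>2$ the von Bahr--Esseen step is replaced by a Rosenthal-type inequality: it produces a principal term with the same geometric decay together with a lower-order term controlled by the $L^{p/2}$-norm of $W_n$. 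An induction on $\lfloor p\rfloor$, each step bootstrapping the moment already controlled at the previous level, closes the argument.

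The main obstacle is the $p>2$ case: the Rosenthal-type bound introduces cross-terms of intermediate order, and one must verify that they all fall inside the geometric decay regime once the lower-order moments are known to be bounded. A clean alternative, in the spirit of Guivarc'h--Liu, is to exploit the Mellin transform $\phi(s)=\mathbb{E} W^s$: the branching recursion yields a functional equation for $\phi$ whose contraction properties, governed by the convex map $s\mapsto\mathbb{E} m_0^{1-s}$, transport finiteness of $\phi$ across the critical range $s=p$ precisely when $\mathbb{E} m_0^{1-p}<1$.
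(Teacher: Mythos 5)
First, a framing note: the paper itself does \emph{not} prove this proposition --- it is quoted verbatim from Guivarc'h--Liu (2001), Theorem~3, and used as a known result. So there is no in-paper proof to match line-by-line; I will compare against the closely related machinery the paper does develop (Theorem~\ref{CRT2.1.3}, Lemmas~\ref{CRL2.4.1}--\ref{CRL2.4.2}, Propositions~\ref{CRP2.4.1} and~\ref{CRP2.4.3}).

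Your overall architecture is right, and the easy implications are fine. For (iv)$\Rightarrow$(i) your superadditivity plus conditional Jensen argument on $W=m_0^{-1}\sum_{i=1}^{Z_1}W^{(i)}$ is correct and is the same decomposition the paper uses to prove (iv)$\Rightarrow$(ii) in the quenched Theorem~\ref{CRT2.1.3}. Two points should be made explicit to secure the strict inequality $\mathbb{E}m_0^{1-p}<1$: supercriticality forces $\mathbb{P}(m_0>1)>0$, hence $\mathbb{P}_\xi(Z_1\geq 2)>0$ on a positive-$\tau$-measure set; and (iv) forces the ergodic constant $c=\mathbb{E}_\xi W$ to be strictly positive, hence $\mathbb{P}_{T\xi}(W>0)>0$ a.s.\ --- together these make the strict-superadditivity event of positive $\mathbb{P}$-probability, and your ``$\mathbb{E}_{T\xi}W$ is a.s.\ a positive constant by ergodicity'' needs the second of these observations, not ergodicity alone. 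The (i)$\Rightarrow$(ii) step for $p\in(1,2]$ via von Bahr--Esseen is also clean: the factor $m_n^{-p}\mathbb{E}_\xi|X_{n,1}-m_n|^p$ depends only on $\xi_n$ and is therefore independent of $P_n^{1-p}W_n$, and $\mathbb{E}[P_n^{1-p}W_n]=\mathbb{E}[P_n^{1-p}\mathbb{E}_\xi W_n]=(\mathbb{E}m_0^{1-p})^n$, giving the geometric series.

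The genuine gap is (i)$\Rightarrow$(ii) for $p>2$, and it is deeper than ``cross-terms of intermediate order.'' The Rosenthal step produces a term proportional to $\mathbb{E}\bigl[P_n^{-p/2}W_n^{p/2}\bigr]$ (the paper's $u_n(p/2,p/2)$), and under the annealed law $P_n$ and $W_n$ are \emph{not} independent --- indeed $W_n=Z_n/P_n$, so the two are tightly coupled. Your phrase ``a lower-order term controlled by the $L^{p/2}$-norm of $W_n$'' and the proposed induction on $\lfloor p\rfloor$ implicitly replace $\mathbb{E}[P_n^{-p/2}W_n^{p/2}]$ by $(\mathbb{E}m_0^{-p/2})^n\sup_n\mathbb{E}W_n^{p/2}$; that factorisation is exactly what fails, so the induction does not close as stated. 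This is the difficulty that drives the paper to prove a separate, nontrivial recursive inequality for $u_n(s,r)=\mathbb{E}P_n^{-s}W_n^r$ (Lemma~\ref{CRL2.4.1}, derived from a size-biased distributional equation, with the resulting growth estimate in Lemma~\ref{CRL2.4.2}), which is then fed into Proposition~\ref{CRP2.4.3}. An alternative that actually works is the exponential tilting $\tilde\tau_0(dx)\propto m(x)^{-p/2}\tau_0(dx)$ used in Proposition~\ref{CRP2.4.1}: it turns $\mathbb{E}[P_n^{-p/2}W_n^{p/2}]$ into $(\mathbb{E}m_0^{-p/2})^n\,\tilde{\mathbb{E}}W_n^{p/2}$, at the cost of having to rerun the $L^{p/2}$-boundedness under $\tilde{\mathbb{P}}$ and verify the tilted hypothesis $\tilde{\mathbb{E}}m_0^{1-p/2}<1$, i.e.\ $\mathbb{E}m_0^{1-p}<\mathbb{E}m_0^{-p/2}$. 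You correctly flag $p>2$ as the main obstacle, and your Mellin-transform remark is indeed closer to what Guivarc'h--Liu actually do, but in the form written the $p>2$ branch of the proof is incomplete.
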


We shall prove the following theorem about the rate of convergence.

\begin{thm}[Exponential rate of annealed $L^p$  convergence]\label{CRC5.9}
 Assume that $(\xi_n)$ are i.i.d.. Let $p>1$ and
$\rho>1$.
\begin{itemize}
\item[] (a) Assume that $\mathbb{E}\left(\frac{Z_1}{m_0}\right)^p<\infty$ and $\mathbb{E}m_0^{1-p}<1$. Then
$$\lim_{n\rightarrow\infty}\rho^n(\mathbb{E}|W-W_n|^p)^{1/p}=0\quad\text{for}\;\rho<\rho_0,$$
where $\rho_0>1$ is defined by
\begin{equation*}
\rho_0=\left\{\begin{array}{ll}
(\mathbb{E}m_0^{1-p})^{-1/p}& \text{if $p\in(1, 2)$},\\
\min\{(\mathbb{E}m_0^{1-p})^{-1/p},(\mathbb{E}m_0^{-p/2})^{-1/p}\}&\text{if $p\geq 2$}.
\end{array}\right.
\end{equation*}
 \item[](b) Assume that $\mathbb{P}(W_1=1)<1$ and that either of the following conditions is satisfied:
\begin{equation*}
\begin{array}{l}
\text{(i) $p\in(1,2)$, $\mathbb{E}\left(\mathbb{E}_\xi\left(\frac{Z_1}{m_0}\right)^2\right)^{p/2}<\infty$, $\mathbb{E}m_0^{-p/2}\log m_0>0$ and $\mathbb{E}m_0^{-p/2-1}Z_1\log^+Z_1<\infty$};\\
\text{(ii) $p\geq 2$ and $\mathbb{E}(\frac{Z_1}{m_0})^p<\infty$}.
\end{array}
\end{equation*}
Set
\begin{equation*}
\rho_c=\left\{\begin{array}{ll}
(\mathbb{E}m_0^{-p/2})^{-1/p}& \text{if $p\in(1, 2)$},\\
\min\{(\mathbb{E}m_0^{1-p})^{-1/p},(\mathbb{E}m_0^{-p/2})^{-1/p}\}&\text{if $p\geq 2$}.
\end{array}\right.
\end{equation*}
Then
$$\limsup_{n\rightarrow\infty}\rho^n(\mathbb{E}|W-W_n|^p)^{1/p}\left\{\begin{array}{ll}
=0\quad  &\text{if}\;\rho<\rho_c,\\
>0\quad   &\text{if}\;\rho>\rho_c.
\end{array}
\right.
$$
\end{itemize}
\end{thm}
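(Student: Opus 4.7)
The proof hinges on the branching decomposition
\[
W - W_n \;=\; \frac{1}{P_n}\sum_{i=1}^{Z_n}\bigl(W^{(n,i)}-1\bigr),\qquad d_k := W_{k+1}-W_k = \frac{1}{P_{k+1}}\sum_{i=1}^{Z_k}(X_{k,i}-m_k),
\]
where, conditionally on $\mathcal{F}_n$ and $\xi$, the $W^{(n,i)}$ are i.i.d.\ with the law of $W$ in the shifted environment $T^n\xi$ and have mean one. Because $(\xi_n)$ is i.i.d., $T^n\xi$ is independent of $\mathcal{F}_n$; this structural fact is what lets annealed expectations factor into products of single-coordinate integrals such as $\mathbb{E}m_0^{1-p}$, $\mathbb{E}m_0^{-p/2}$, and $\mathbb{E}(m_0^{-p}\sigma_0^p)$.

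For part~(a) I would write $\|W-W_n\|_p \le \sum_{k\ge n}\|d_k\|_p$ and estimate $\mathbb{E}|d_k|^p$ by applying a conditional moment inequality to the centered i.i.d.\ sum $\sum_{i=1}^{Z_k}(X_{k,i}-m_k)$. For $p\in(1,2)$ the von Bahr--Esseen inequality gives $\mathbb{E}_\xi[|d_k|^p\mid\mathcal{F}_k]\le 2P_{k+1}^{-p}Z_k\,\mathbb{E}_\xi|X_{k,0}-m_k|^p$; taking $\mathbb{E}_\xi$ and then the annealed expectation and separating the $\xi_k$-dependence via i.i.d.\ independence gives $\mathbb{E}|d_k|^p \le C(\mathbb{E}m_0^{1-p})^k$, and the geometric tail produces the rate $\rho_0=(\mathbb{E}m_0^{1-p})^{-1/p}$. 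For $p\ge 2$ the Burkholder--Rosenthal inequality adds a variance contribution $C_pP_{k+1}^{-p}\sigma_k^pZ_k^{p/2}$; after the annealed expectation and i.i.d.\ separation, one is reduced to the key estimate $\mathbb{E}[P_k^{-p/2}W_k^{p/2}] \le C(\mathbb{E}m_0^{-p/2})^k$, which I would obtain from the annealed $L^p$ boundedness of $(W_k)$ together with the $\mathbb{P}$-integrability of $(\mathbb{E}_\xi W^p)^{1/2}$ (Cauchy--Schwarz, using $\mathbb{E}W^p<\infty$). Combining the two terms yields $\rho_0=\min\{(\mathbb{E}m_0^{1-p})^{-1/p},(\mathbb{E}m_0^{-p/2})^{-1/p}\}$.

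For part~(b) the starting point is the martingale lower bound
\[
\|W-W_n\|_p \;\ge\; \|d_n\|_p,
\]
which follows from conditional Jensen applied to $|d_n + (W-W_{n+1})|^p$ given $\mathcal{F}_{n+1}$, using $\mathbb{E}[W-W_{n+1}\mid\mathcal{F}_{n+1}]=0$ (from the $L^1$ martingale convergence ensured by~(a)). It suffices to bound $\mathbb{E}|d_n|^p$ from below. For $p\ge 2$, the \emph{two-sided} Rosenthal inequality yields $\mathbb{E}_\xi[|d_n|^p\mid\mathcal{F}_n] \ge c_pP_{n+1}^{-p}(Z_n^{p/2}\sigma_n^p + Z_n\mu_n)$, with $\mu_n=\mathbb{E}_\xi|X_{n,0}-m_n|^p$; combining with $\mathbb{E}_\xi Z_n^{p/2}\ge P_n^{p/2}$ (quenched Jensen) and the i.i.d.\ factorization produces the two rates $\mathbb{E}(m_0^{-p}\sigma_0^p)(\mathbb{E}m_0^{-p/2})^n$ and $\mathbb{E}(m_0^{-p}\mu_0)(\mathbb{E}m_0^{1-p})^n$, both with constants positive under $\mathbb{P}(W_1=1)<1$. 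For $p\in(1,2)$, the two-sided Marcinkiewicz--Zygmund inequality combined with a law-of-large-numbers argument replaces $\sum_i(X_{n,i}-m_n)^2$ by $Z_n\sigma_n^2$ on the high-probability event $\{Z_n\text{ large}\}$; the conditions $\mathbb{E}m_0^{-p/2}\log m_0>0$ and $\mathbb{E}m_0^{-p/2-1}Z_1\log^+Z_1<\infty$ guarantee that this substitution yields the exact exponential rate $(\mathbb{E}m_0^{-p/2})^n$ without a logarithmic correction.

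The main technical obstacle is achieving the sharp exponent $(\mathbb{E}m_0^{-p/2})^n$ in the mixed quantity $\mathbb{E}[P_n^{-p/2}W_n^{p/2}]$ (both in the upper bound of (a) for $p\ge 2$ and in the lower bound for the $p\in(1,2)$ case of (b)): a naive Cauchy--Schwarz would yield only $(\mathbb{E}m_0^{-p})^{n/2}$, which by Jensen is strictly larger than $(\mathbb{E}m_0^{-p/2})^n$, and avoiding this loss is precisely why the refined moment and $L\log L$-type conditions of (b)(i) appear in the statement.
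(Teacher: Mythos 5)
Your skeleton is sound and on the upper-bound side via $\|W-W_n\|_p\le\sum_{k\ge n}\|d_k\|_p$ for $p\in(1,2)$, and on the lower-bound side via conditional Jensen $\|W-W_n\|_p\ge\|d_n\|_p$ (a cleaner route than the paper's, which goes through the auxiliary martingale $\hat A_n=\sum_{k\le n}\rho^k(W_{k+1}-W_k)$ and Burkholder). However, you have correctly identified the crux --- controlling $\mathbb{E}\bigl[P_n^{-p/2}W_n^{p/2}\bigr]$ --- and then left it unresolved in both directions, which leaves two genuine gaps.

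\emph{Upper bound, $p\ge2$.} You write that Cauchy--Schwarz ``would give'' the estimate $\mathbb{E}[P_n^{-p/2}W_n^{p/2}]\le C(\mathbb{E}m_0^{-p/2})^n$ and then, in your last paragraph, concede that Cauchy--Schwarz actually yields the strictly worse rate $(\mathbb{E}m_0^{-p})^{n/2}$. That concession is correct, and it is fatal: with only the Cauchy--Schwarz bound one proves the result for $\rho<\min\{(\mathbb{E}m_0^{1-p})^{-1/p},(\mathbb{E}m_0^{-p})^{-1/(2p)}\}$, which is smaller than the claimed $\rho_0$. The paper closes this gap by a genuinely different device: it introduces the quantity $u_n(s,r)=\mathbb{E}P_n^{-s}W_n^r$, derives a recursive inequality $u_n(s,r)^{1/(r-1)}\le(\mathbb{E}m_0^{1-r-s})^{1/(r-1)}u_{n-1}(s,r)^{1/(r-1)}+(\mathbb{E}m_0^{-s}W_1^r)^{1/(r-1)}u_{n-1}(s,r-1)^{1/(r-1)}$ from a size-biased transform of $W_n$ (obtained by differentiating the quenched characteristic function and reading off the distributional fixed-point equation $V_n\stackrel{d}{=}m_0^{-1}V_{n-1}^{(1)}+M_n$), and then solves the recursion by induction on the integer part of $r$. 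This gives $u_n(p/2,p/2)=O\bigl(n^\gamma\max\{\mathbb{E}m_0^{1-p},\mathbb{E}m_0^{-p/2}\}^n\bigr)$, which is exactly what part (a) for $p\ge2$ needs. None of this machinery appears in your proposal.

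\emph{Lower bound, $p\in(1,2)$.} You invoke a ``two-sided Marcinkiewicz--Zygmund plus law of large numbers'' argument and assert that the hypotheses $\mathbb{E}m_0^{-p/2}\log m_0>0$ and $\mathbb{E}m_0^{-p/2-1}Z_1\log^+Z_1<\infty$ ``guarantee'' the exact rate $(\mathbb{E}m_0^{-p/2})^n$ --- but you do not explain the mechanism, and the bare LLN heuristic does not obviously produce a nonvanishing constant. The paper's mechanism is a change of measure: it tilts the environment law to $\tilde\tau_0(dx)=m(x)^{-p/2}\tau_0(dx)/\mathbb{E}m_0^{-p/2}$, for which $\mathbb{E}P_n^{-p/2}W_n^{p/2}=\tilde{\mathbb{E}}W_n^{p/2}\,(\mathbb{E}m_0^{-p/2})^n$ exactly. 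Your two extra hypotheses are then recognized as precisely $\tilde{\mathbb{E}}\log m_0>0$ (supercriticality of the tilted BPRE) and $\tilde{\mathbb{E}}\tfrac{Z_1}{m_0}\log^+Z_1<\infty$ (Tanny's Kesten--Stigum criterion), which together force $\inf_n\tilde{\mathbb{E}}W_n^{p/2}=\tilde{\mathbb{E}}W^{p/2}>0$. That strictly positive constant is what turns the rate into a genuine lower bound. Without this (or an equivalent) argument your part (b)(i) is a statement of intent, not a proof.

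In short: the scaffolding (centered i.i.d.~decomposition, von Bahr--Esseen / Rosenthal / Burkholder moment bounds, the i.i.d.~factorization of single-coordinate integrals, conditional Jensen for the lower bound) is all fine and would reproduce the easy parts of the theorem, but both of the sharp-rate estimates on $\mathbb{E}[P_n^{-p/2}W_n^{p/2}]$ are missing, and these are exactly the content of the paper's Lemmas on $u_n(s,r)$ and the tilted-environment construction.
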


\noindent\textbf{Remark.} By the convexity of the function $\mathbb{E}m_0^{-x}$, the condition $\mathbb{E}m_0^{-p/2}\log m_0>0$ implies that $\mathbb{E}m_0^{-x}$ is strictly decreasing on $(-\infty, \frac{p}{2}]$. Thus $\mathbb{E}m_0^{-p/2}<\mathbb{E}m_0^{1-p}$ for $p\in(1,2)$, so that $\rho_0\leq\rho_c$.
\\*


Theorem \ref{CRC5.9}(a) implies that $W_n\rightarrow W$   in $L^p$ under $\mathbb{P}$ (annealed) at an exponential rate.  Theorem \ref{CRC5.9}(b) shows that under certain moment conditions,   $\rho_c$  is the critical value of $\rho>1$ for the annealed $L^p$ convergence of  $\rho^n(W-W_n)$ to $0$, while
Theorem \ref{CRC2.1.5}(b) shows that $\bar\rho_c$ is the critical value for the quenched $L^p$ convergence.
Notice that by Jensen's inequality,
$$\mathbb{E}m_0^{-p/2}=\mathbb{E}\exp(-\frac{p}{2}\log m_0)\geq \exp(-\frac{p}{2}\mathbb{E}\log m_0),$$
so that $(\mathbb{E}m_0^{-p/2})^{-1/p}\leq \exp(\frac{1}{2}\mathbb{E}\log m_0)$. This shows that $\rho_c\leq \bar{\rho}_c$.
\\*

The rest of this paper is organized as follows. In Section \ref{CRLPS2}, we consider the $L^p$ convergence of the martingale $W_n$ and its exponential rate for a branching process in a varying environment.   In Sections \ref{CRLPS5}
and \ref{CRLPS6}, we study the random environment case, and give the proofs of the main results:
in Section \ref{CRLPS5}, we consider the quenched case and prove Theorems \ref{CRT2.1.3} and \ref{CRC2.1.5}; in Section \ref{CRLPS6}, we consider the annealed case and give the proof of Theorem \ref{CRC5.9}.

\section{Branching process in a varying environment}\label{CRLPS2}
In this section,  as preliminaries, we study the $L^p$ convergence and its convergence rate for a branching process $(Z_n)$ in a varying environment (BPVE).
 By definition,
\begin{equation}\label{CRLPEE21}
Z_0=1,\qquad Z_{n+1}=\sum_{i=1}^{Z_n}X_{n,i}\quad (n\geq0),
\end{equation}
where $X_{n,i} (i=1,2,\cdots)$ denotes the number of offspring of the
$i$th particle in the $n$th generation, each $X_{n,i}$ has distribution
$p(n)=\left\{p_i(n):i\in\mathbb N_0\right\}$
 on $\mathbb N_0=\{0,1,\cdots\}$, where
 $$
 \text{$p_i(n)\geq0$, \quad$\sum_ip_i(n)=1$ \quad and \quad$\sum_iip_i(n)\in(0,\infty)$;}
 $$
 all the random variables $X_{n,i} (n\geq 0, i\geq1)$ are independent of each other. Let $(\Gamma,\mathbb{P})$ be  the underlying probability space.
  For $n\geq0$ and $p\geq1$, set
\begin{equation}
m_n(p)=\mathbb{E}X_{n,i}^p=\sum_{k} k^pp_k(n),\qquad m_n=m_n(1),
\end{equation}
 and
\begin{equation}
\bar{m}_n(p)=\mathbb{E}\left|\frac{X_{n,i}}{m_n}-1\right|^p=\sum_{k} \left|\frac{k-m_n}{m_n}\right|^pp_k(n).
\end{equation}

Let $\mathcal {F}_0=\{\emptyset,\Gamma\}$ and $\mathcal
{F}_n=\sigma(X_{k,i}:\;0\leq k\leq n-1,\;i=1,2,\cdots)$ be the
$\sigma$-field generated by $X_{k,i} \;(0\leq
k\leq n-1,\;i=1,2,\cdots)$. Like the case of BPRE, let
\begin{equation}
P_0=1,\qquad P_n=\prod_{i=0}^{n-1}m_i\;\;(n\geq 1).
\end{equation}
Then the normalized population size
$W_n={Z_n}/{P_n}$
is a non-negative martingale with
respect to the filtration $\mathcal{F}_n$, and
$\lim_{n\rightarrow\infty}W_n=W$ a.s. for some non-negative random
variable $W$ with $\mathbb{E}W\leq1$. It is well known that there is a
non-negative but possibly infinite random variable $Z_\infty$ such
that $Z_n\rightarrow Z_\infty$ in distribution as
$n\rightarrow\infty$. We are interested in the supercritical case
where $\mathbb{P}(Z_\infty=0)<1$, so that by (\cite{jajer}, Corollary 3), either
$\sum\limits_{n=0}^\infty(1-p_1(n))<\infty$, or
$\lim\limits_{n\rightarrow\infty}P_n=\infty$. Here we assume that
$\lim\limits_{n\rightarrow\infty}P_n=\infty$.

We are interested in the $L^p$ convergence of the martingale
$W_n$ and its convergence rate.  We
have the following theorems.

\begin{thm}[$L^p$ convergence of $W_n$ for BPVE]\label{CRT1.3}
Let $(Z_n)$ be the BPVE defined in (\ref{CRLPEE21}).
\begin{itemize}
\item[](i) Let $p\in(1,2)$. If
$\sum_nP_n^{p(1/r-1)} \bar{m}_n(r)^{p/r}<\infty$
for some $r\in[p,2]$, then
\begin{equation}\label{CREE2.1}
\text{$W_n\rightarrow W$ \quad in \;\;$L^p$.}
\end{equation}
Conversely, if $\liminf\limits_{n\rightarrow\infty}\frac{\log
P_n}{n} >0$ and (\ref{CREE2.1}) holds, then
$\sum_nP_n^{-s-p/2}\bar{m}_n(p)<\infty$ for all $ s>0$.

\item[](ii) Let $p\geq2$. If
$\sum_nP_n^{-1}\bar{m}_n(p)^{2/p}<\infty$,
then (\ref{CREE2.1}) holds.
Conversely, if (\ref{CREE2.1}) holds, then
$\sum_nP_n^{p(1/r-1)}\bar{m}_n(r)^{p/r}<\infty$ for all $r\in[2,p]$.
\end{itemize}
\end{thm}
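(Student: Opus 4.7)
The martingale differences take the form
\[
d_n := W_{n+1} - W_n = \frac{1}{P_n}\sum_{i=1}^{Z_n}(\bar X_{n,i}-1),
\]
a conditionally (given $\mathcal F_n$) centered sum of $Z_n$ i.i.d.\ random variables with common $p$-th moment $\bar m_n(p)$. My plan is to control $\sup_n \mathbb{E}W_n^p$ by controlling these increments, using standard moment inequalities for sums of independent random variables (von Bahr--Esseen for $p\in(1,2)$ and Burkholder/Rosenthal for $p\ge 2$), and for the converses to run essentially the same inequalities in reverse.

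For the sufficient direction in (i), I would apply the von Bahr--Esseen inequality at exponent $r\in[p,2]$ conditionally on $\mathcal F_n$, obtaining $\mathbb E_{\mathcal F_n}|P_n d_n|^{r}\le 2 Z_n\bar m_n(r)$. Raising to the power $p/r\le 1$ and invoking conditional Jensen, then taking $\mathbb E$ and using $\mathbb E Z_n^{p/r}\le P_n^{p/r}$ (again Jensen, since $p/r\le 1$), I get $\mathbb E|d_n|^p\le C\,P_n^{p(1/r-1)}\bar m_n(r)^{p/r}$. A second application of the martingale von Bahr--Esseen bound ($\mathbb E|W_n|^p\le 1+C\sum_k \mathbb E|d_k|^p$ for $p\in(1,2)$) then gives $\sup_n\mathbb EW_n^p<\infty$, from which $L^p$ convergence follows by Doob and the fact that $W_n\to W$ a.s.

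For the sufficient direction in (ii), Burkholder gives $\|W_\infty-W_n\|_p^2\le C\bigl\|\sum_{k\ge n}d_k^2\bigr\|_{p/2}\le C\sum_{k\ge n}\|d_k\|_p^2$, so I need $\sum_k\|d_k\|_p^2<\infty$. A conditional Burkholder or Rosenthal estimate gives $\mathbb E_{\mathcal F_k}|d_k|^p\le C P_k^{-p}Z_k^{p/2}\bar m_k(p)$, and taking expectations then the $(2/p)$-norm yields $\|d_k\|_p^2\le C P_k^{-2}\|Z_k\|_{p/2}\bar m_k(p)^{2/p}$. Thus I need $\sup_k\mathbb EW_k^{p/2}<\infty$, which is achieved by a bootstrap doubling argument: by Lyapunov, the hypothesis $\sum P_n^{-1}\bar m_n(p)^{2/p}<\infty$ implies the analogous hypothesis for every $p'\in[2,p]$, so starting from the trivial case $p'=2$ (where $\|Z_k\|_1=P_k$) and successively doubling, one upgrades $L^{2^j}$-boundedness to $L^{2^{j+1}}$-boundedness until $2^j\ge p$, after which Lyapunov gives $L^p$.

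For the converse direction the strategy is to reverse the above chain. From $W_n\to W$ in $L^p$, the Burkholder lower bound gives $\mathbb E(\sum_{k\ge n}d_k^2)^{p/2}\le C\|W-W_n\|_p^p\to 0$, hence in particular $\mathbb E|d_k|^p\to 0$. In part (ii) one then lower-bounds $\mathbb E_{\mathcal F_n}|P_n d_n|^r$ by the Marcinkiewicz--Zygmund/von Bahr--Esseen lower bound $c Z_n\bar m_n(r)$ (valid on $\{Z_n\ge 1\}$) and uses $\mathbb P(Z_n\ge 1)\to\mathbb P(W>0)>0$ to peel off a positive constant. For part (i), where the target is $\sum P_n^{-s-p/2}\bar m_n(p)<\infty$ for all $s>0$, I would combine a conditional lower bound of the form $\mathbb E_{\mathcal F_n}|d_n|^p\ge c P_n^{-p}(Z_n\bar m_n(2))^{p/2}\ge c P_n^{-p}(Z_n)^{p/2}\bar m_n(p)$ (using Lyapunov $\bar m_n(p)\le\bar m_n(2)^{p/2}$) with a lower bound on $\mathbb E Z_n^{p/2}$ on the set where $Z_n$ is close to $P_n$; the hypothesis $\liminf \frac{1}{n}\log P_n>0$ is exactly what allows the $P_n^{-s}$ slack to absorb the deviations of $Z_n$ from $P_n$.

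The main obstacle is the converse of (i). The two difficulties are: (a) for $p\in(1,2)$ one only has weak lower bounds of Marcinkiewicz--Zygmund type for $\mathbb E|\sum Y_i|^p$ in terms of the individual moment $\mathbb E|Y_1|^p$, so the matching exponent $p/2$ on $P_n$ has to be produced from the quadratic-variation side $\bar m_n(2)^{p/2}$ and then passed to $\bar m_n(p)$ via Lyapunov at some loss; (b) the loss has to be paid for by a factor of the form $P_n^{-s}$, which is why the conclusion contains an arbitrary $s>0$ and requires the geometric-growth hypothesis. Once these two ingredients are assembled correctly the summability conclusion follows.
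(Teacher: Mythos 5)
Your sufficiency arguments are sound and essentially the ones in the paper: in (i) you reach $\mathbb{E}|W_{n+1}-W_n|^p\le CP_n^{p(1/r-1)}\bar m_n(r)^{p/r}$ via von Bahr--Esseen where the paper uses a conditional Burkholder plus Jensen (equivalent bookkeeping), and in (ii) your bound $\|W_{n+1}-W_n\|_p^2\le CP_n^{-1}(\mathbb{E}W_n^{p/2})^{2/p}\bar m_n(p)^{2/p}$ together with the doubling bootstrap for $\sup_n\mathbb{E}W_n^{p/2}$ is exactly the paper's mechanism. Your converse of (ii) also works, though the detour through $\mathbb{P}(Z_n\ge1)\to\mathbb{P}(W>0)$ is superfluous: $Z_n\mathbf{1}_{\{Z_n\ge1\}}=Z_n$, so the conditional lower bound integrates directly to $\mathbb{E}|W_{n+1}-W_n|^r\ge cP_n^{1-r}\bar m_n(r)$, and one then passes to exponent $p$ by Jensen; the paper does this even more directly with a two-step Jensen at the unconditional level.

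The converse of (i) has a real gap, in two places. First, the inequality $\mathbb{E}_{\mathcal F_n}|d_n|^p\ge cP_n^{-p}\bigl(Z_n\bar m_n(2)\bigr)^{p/2}$ is Jensen run the wrong way: for $p\in(1,2)$ the map $y\mapsto y^{p/2}$ is concave, so the conditional quadratic variation gives $\mathbb{E}_{\mathcal F_n}\bigl(\textstyle\sum_i(\bar X_{n,i}-1)^2\bigr)^{p/2}\le(Z_n\bar m_n(2))^{p/2}$, not $\ge$. The usable lower bound (which the paper records in Lemma \ref{CRL1.4.1}(i)) comes from the power-mean inequality $\bigl(\sum_i a_i^2\bigr)^{p/2}\ge Z_n^{p/2-1}\sum_i a_i^p$ and reads $\mathbb{E}_{\mathcal F_n}|d_n|^p\ge c\,P_n^{-p}Z_n^{p/2}\bar m_n(p)$; your final expression is correct but your intermediate step is not. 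Second, and more seriously, you have no device that converts $\sup_n\mathbb{E}|W_n-1|^p<\infty$ into a summability statement about $\mathbb{E}|W_{n+1}-W_n|^p$ when $p<2$. Burkholder gives the lower bound $\sup_n\mathbb{E}|W_n-1|^p\ge a_p\,\mathbb{E}\bigl(\sum_n d_n^2\bigr)^{p/2}$, but by concavity $\bigl(\sum_n d_n^2\bigr)^{p/2}\le\sum_n|d_n|^p$, so this does \emph{not} dominate $\sum_n\mathbb{E}|d_n|^p$; you cannot ``run the sufficiency chain in reverse'' term by term. The paper's missing ingredient here is Lemma \ref{CRL1.4.2}: split $\sum_n d_n^2=\sum_n\frac{1}{\eta P_n^{s'}}\cdot(\eta P_n^{s'}d_n^2)$ with $\eta=\sum_nP_n^{-s'}$, apply Jensen for the concave function against the probability weights $\frac{1}{\eta P_n^{s'}}$ to obtain $\mathbb{E}\bigl(\sum_n d_n^2\bigr)^{p/2}\ge\eta^{p/2-1}\sum_nP_n^{s'(p/2-1)}\mathbb{E}|d_n|^p$, and then choose $s'=\frac{2s}{2-p}$. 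This weighted Jensen is precisely what produces the $P_n^{-s}$ factor, and the hypothesis $\liminf\frac{\log P_n}{n}>0$ is there solely to guarantee $\eta<\infty$ -- not, as you suggest, to ``absorb deviations of $Z_n$ from $P_n$.'' (There are no such deviations to absorb: $\mathbb{E}Z_n^{p/2}=P_n^{p/2}\mathbb{E}W_n^{p/2}$ and $\mathbb{E}W_n^{p/2}\to\mathbb{E}W^{p/2}>0$ once $L^p$-convergence holds, so $\inf_n\mathbb{E}W_n^{p/2}>0$ is already available.) Without the weighted Jensen step, the converse of (i) does not go through.
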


\noindent\textbf{Remark 1.}  As $W_n$ is a martingale, $\forall p>1$, (\ref{CREE2.1}) holds if and only if $\sup_n\mathbb{E}W_n^p<\infty$.
\\*

\noindent\textbf{Remark 2.} For $p=2$, $\sup_n\mathbb{E}W_n^2=1+\sum_{n=0}^{\infty}P_n^{-1} \bar{m}_n(2)$.
So $\sup_n\mathbb{E}W_n^2<\infty$ if and only if $\sum_nP_n^{-1}\bar{m}_n(2)<\infty$, as shown by Jagers (1974,  \cite{jajer}, Theorem 4).
\\*



\begin{thm}[Exponential   rate of $L^p$ convergence of $W_n$ for BPVE]\label{CRC1.5}
Let $(Z_n)$ be the BPVE defined in (\ref{CRLPEE21}) and let $\rho>1$.
\begin{itemize}
\item[] (i) Let $p\in(1,2)$. If
$\sum_n\rho^{pn}P_n^{p(1/r-1)}\bar{m}_n(r)^{p/r}<\infty$ for some
$r\in[p, 2]$, then
\begin{equation}\label{CRE1.1.5}
(\mathbb{E}|W-W_n|^p)^{1/p}=o(\rho^{- n}).
\end{equation}
 Conversely, if $\liminf_{n\rightarrow\infty}\frac{\log P_n}{n} >0$ and
 (\ref{CRE1.1.5}) holds, then $\sum_n{\rho_1}^{p
n}P_n^{-s-p/2}\bar{m}_n(p)<\infty$ for all  $\rho_1\in(1,\rho)$ and all
$s>0$.
\item[] (ii) Let $p\geq2$. If $\sum_n\rho^{2
n}P_n^{-1}\bar{m}_n(p)^{2/p}<\infty$, then (\ref{CRE1.1.5})  holds. Conversely, if (\ref{CRE1.1.5}) holds, then $\sum_n{\rho_1}^{pn}P_n^{p(1/r-1)}\bar{m}_n(r)^{p/r}<\infty$ for all $\rho_1\in(1,\rho)$ and $ r\in[2,p]$.
\end{itemize}
\end{thm}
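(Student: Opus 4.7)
The plan is to deduce Theorem \ref{CRC1.5} from Theorem \ref{CRT1.4} by a mechanical translation resting on two elementary facts: (a) convergence of a series $\sum_n x_n$ in a Banach space forces $x_n \to 0$; and (b) Minkowski's inequality controls partial sums of $L^p$-norms. No new probabilistic input is needed beyond Theorem \ref{CRT1.4}; the entire content of Theorem \ref{CRC1.5} is that the exponential rate of $W_n \to W$ in $L^p$ and the $L^p$ convergence of $A(\rho) = \sum_n \rho^n(W-W_n)$ determine each other up to a harmless $\rho \to \rho_1 < \rho$ shift.

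For the sufficiency halves of both (i) and (ii), the hypothesis on $\sum_n \rho^{pn} P_n^{p(1/r-1)}\bar{m}_n(r)^{p/r}$ (case $p \in (1,2)$), respectively $\sum_n \rho^{2n} P_n^{-1}\bar{m}_n(p)^{2/p}$ (case $p \geq 2$), is exactly the sufficient condition for $L^p$ convergence of $A(\rho)$ provided by Theorem \ref{CRT1.4}. Since convergence of $\sum_n \rho^n(W-W_n)$ in $L^p$ forces its general term to tend to $0$ in $L^p$, I obtain $\rho^n(W-W_n) \to 0$ in $L^p$, i.e.\ $(\mathbb{E}|W-W_n|^p)^{1/p} = o(\rho^{-n})$, which is (\ref{CRE1.1.5}).

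For the converse halves, fix $\rho_1 \in (1,\rho)$ and set $\varepsilon_n = \rho^n(\mathbb{E}|W-W_n|^p)^{1/p}$, so that $\varepsilon_n \to 0$ by (\ref{CRE1.1.5}). Minkowski's inequality applied to the partial sums of $\sum_n \rho_1^n(W-W_n)$ yields
$$\left(\mathbb{E}\left|\sum_{n=N}^{M}\rho_1^n(W-W_n)\right|^p\right)^{1/p} \leq \sum_{n=N}^{M}\rho_1^n(\mathbb{E}|W-W_n|^p)^{1/p} = \sum_{n=N}^{M}(\rho_1/\rho)^n\varepsilon_n,$$
and since $\rho_1/\rho < 1$ while $(\varepsilon_n)$ is bounded, the right-hand side is a Cauchy tail. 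Hence $A(\rho_1)$ converges in $L^p$, so the necessity halves of Theorem \ref{CRT1.4}(i) and (ii) applied at $\rho_1$ deliver $\sum_n \rho_1^{pn} P_n^{-s-p/2}\bar{m}_n(p) < \infty$ for every $s > 0$ (case $p \in (1,2)$, under $\liminf_n \log P_n/n > 0$), and $\sum_n \rho_1^{pn} P_n^{p(1/r-1)}\bar{m}_n(r)^{p/r} < \infty$ for every $r \in [2,p]$ (case $p \geq 2$).

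There is no real obstacle, since everything is a direct consequence of Theorem \ref{CRT1.4}; the only point requiring mild care is the $\rho_1 < \rho$ shift in the converse, which is forced by the fact that (\ref{CRE1.1.5}) supplies only little-$o$ decay of $(\mathbb{E}|W-W_n|^p)^{1/p}$ against $\rho^{-n}$, not summability of $\rho^n (\mathbb{E}|W-W_n|^p)^{1/p}$ itself.
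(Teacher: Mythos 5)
Your proof is correct and follows exactly the route the paper takes: it deduces Theorem \ref{CRC1.5} from Theorem \ref{CRT1.4} via the observation that $L^p$ convergence of $A(\rho)$ forces its general term $\rho^n(W-W_n)\to 0$ in $L^p$, while the converse requires the shrinkage to $\rho_1<\rho$ (handled via Minkowski and boundedness of $\varepsilon_n$). The paper states these two implications without proof as "relations between $W-W_n$ and $A(\rho)$" just after (\ref{CRE1.6}); you have simply written out what it leaves implicit.
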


\subsection{The martingale $\{\hat{A}_n\}$}\label{CRLPS3}
To estimate the exponential rate of  $L^p$ convergence of $W_n$, following \cite{IK}, we consider the series
\begin{equation}\label{CRE1.6}
A(\rho)=\sum_{n=0}^{\infty}\rho^{n}(W-W_n)\quad (\rho>1).
\end{equation}
Here $A(\rho)$  denotes the series; it will also denote the sum of the series when the series converges.
The convergence of the series $A(\rho)$ reflects the exponential
rate of $W-W_n$. More precisely, if the series $A(\rho)$ converges a.s. (resp. in $L^p$, $p>1$), then
$\rho^{n}(W-W_n)\rightarrow0$ a.s. (resp. in $L^p$). Conversely,
if $\rho^{n}(W-W_n)\rightarrow0$ a.s. (resp. in $L^p$), then for $\rho_1\in(1,\rho)$, the series $A(\rho_1)$ converges a.s. (resp. in $L^p$). Moreover, from the remark after Lemma \ref{CRL1.3.2} below, we shall see that the $L^p$ convergence of the series $A(\rho)$ implies its a.s. convergence.

As in \cite{IK}, we  introduce an associated martingale $\{\hat{A}_n\}$. Let $\rho\geq1$, and define
\begin{equation}
\hat{A}_n=\hat{A}_n(\rho)=\sum_{k=0}^{n}\rho^{k}(W_{k+1}-W_k),\qquad
\hat{A}(\rho)=\sum_{n=0}^{\infty}\rho^{n}(W_{n+1}-W_n).
\end{equation}
As in the case of $A(\rho)$, here $\hat A(\rho)$ also denotes the series and it also denote the sum of the series when the series converges.
It is easy to see that $\{(\hat{A}_n;\mathcal{F}_{n+1})\}$ forms a
martingale. In particular, for $\rho=1$, $\hat{A}_n=W_{n+1}-1$.
By the convergence theorems for martingales,
$\sup_n\mathbb{E}|\hat{A}_n|^p<\infty$ implies that the series $\hat{A}(\rho)$ converges a.s.
and in $L^p$. Therefore the $L^p$ convergence of $\hat{A}(\rho)$ is
equivalent to $\sup_n\mathbb{E}|\hat{A}_n|^p<\infty$. Moreover, if $\hat{A}(\rho)$ converges
in $L^p$, then it also converges a.s..

It is known that the series $A(\rho)$ and $\hat A(\rho)$ have the following relations.

\begin{lem}[\cite{IK}, Lemma 3.1]\label{CRL1.3.2}
Let $p>1$ and $\rho>1$.
 The series $A(\rho)$ converges a.s. (resp. in $L^p$) if and only if the same is true for the series $\hat{A}(\rho)$.
\end{lem}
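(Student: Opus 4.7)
The plan is to set up a single algebraic identity linking the partial sums
$S_N := \sum_{n=0}^N \rho^n (W-W_n)$ of $A(\rho)$ and the martingale
$T_N := \hat A_N = \sum_{n=0}^N \rho^n (W_{n+1}-W_n)$, and then to deduce both halves of the
equivalence (a.s.\ and $L^p$) from it. Abel summation applied to $T_N$ with $a_n = \rho^n$,
$b_n = W_n$, followed by substitution of the elementary relation
$\sum_{n=1}^N \rho^n W_n = W(\rho^{N+1}-1)/(\rho-1) - W_0 - S_N$, yields
\begin{equation*}
\rho\, T_N \;=\; (\rho-1)\,S_N \;+\;(W-W_0)\;-\;\rho^{N+1}(W-W_{N+1}).
\end{equation*}
Thus the whole lemma reduces to showing that the ``remainder'' $\rho^{N+1}(W-W_{N+1})$
tends to $0$ (a.s.\ resp.\ in $L^p$) under the appropriate hypothesis.

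One direction is immediate: if $A(\rho)$ converges (a.s.\ resp.\ in $L^p$), then its general term
$\rho^N(W-W_N)$ tends to $0$ in the same mode, and the identity then gives convergence of $T_N$ in the
same mode. It remains to treat the converse: assuming $\hat A(\rho)$ converges, show
$\rho^{N+1}(W-W_{N+1}) \to 0$.

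For this, set $g_k := \rho^k(W_{k+1}-W_k)$ and observe that, since $W_n \to W$ a.s.,
\begin{equation*}
\rho^{N+1}(W-W_{N+1}) \;=\; \sum_{k=N+1}^{\infty} \rho^{N+1-k}\, g_k \quad \text{a.s.}
\end{equation*}
In the a.s.\ case, write $R_N := \hat A(\rho) - \hat A_N = \sum_{k=N+1}^\infty g_k$, which tends to
$0$ a.s.\ by assumption; a second Abel summation using $g_k = R_{k-1}-R_k$ rewrites the right-hand side
as $R_N - (\rho-1)\sum_{k=N+1}^\infty \rho^{N-k} R_k$, and both pieces vanish (the tail sum is bounded in
absolute value by $(\rho-1)^{-1}\sup_{k\ge N+1}|R_k|$). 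In the $L^p$ case, the martingale
$n \mapsto \sum_{k=N+1}^n \rho^{N+1-k}g_k$ satisfies, by Burkholder's inequality
(Lemma~\ref{CRL1.3.1}),
\begin{equation*}
\Bigl\| \sum_{k=N+1}^{\infty} \rho^{N+1-k}g_k \Bigr\|_p
\;\le\; b_p \Bigl\| \Bigl(\sum_{k=N+1}^{\infty} \rho^{2(N+1-k)} g_k^2 \Bigr)^{1/2} \Bigr\|_p
\;\le\; b_p \Bigl\| \Bigl(\sum_{k=N+1}^{\infty} g_k^2 \Bigr)^{1/2} \Bigr\|_p,
\end{equation*}
and $L^p$ convergence of $\hat A(\rho)$ gives, via Burkholder applied to $\hat A_n$, that
$\|(\sum_{k\ge 0} g_k^2)^{1/2}\|_p < \infty$, so dominated convergence drives the right-hand side to $0$.

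The main obstacle is bookkeeping rather than substance: one must justify rigorously the a.s.\ identity
expressing $\rho^{N+1}(W-W_{N+1})$ as the tail series in $g_k$ (immediate in the a.s.\ case from
$W - W_{N+1} = \sum_{k\ge N+1}(W_{k+1}-W_k)$, and in the $L^p$ case via the fact that the partial sums of that
tail form an $L^p$-bounded martingale by Burkholder, hence converge both a.s.\ and in $L^p$ to the same
limit), and one must be careful to apply Burkholder to the correctly truncated martingale so that the
dominating square-function sits in a summable $L^p$ environment.
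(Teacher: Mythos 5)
Your proof is correct and follows essentially the same route as the paper's: you derive the same algebraic identity linking the partial sums $A_N$ and $\hat A_N$ (you get it via Abel summation on $\hat A_N$, the paper via a double-sum interchange, but the formula is identical), and then both directions reduce to showing the remainder $\rho^{N+1}(W-W_{N+1})$ tends to $0$. For the a.s.\ converse you and the paper both use an Abel-type telescoping to control $\rho^N(W-W_N)$ by tails of $\hat A_n$; for the $L^p$ converse you both invoke Burkholder together with the monotone bound $\rho^{2(N+1-k)}\le1$ for $k\ge N+1$ (the paper phrases the conclusion via the $L^p$-Cauchy criterion for $\hat A_n$, you via dominated convergence for the square function, which is equivalent).
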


\noindent\textbf{Remark.} According to the relations between $\hat{A}_n$ and $\hat A(\rho)$ stated above,
Lemma
\ref{CRL1.3.2} in fact tells us that $A(\rho)$ converges in $L^p$ if and
only if $\sup_n\mathbb{E}|\hat{A}_n|^p<\infty$, and the $L^p$ convergence of
$A(\rho)$ implies its a.s. convergence.
\\*

We shall study the $L^p$ convergence of $A(\rho)$ through the existence of the $p$th-moment of the martingale $\{\hat A_n\}$. The main tool is the Burkholder's inequality for martingales.

\begin{lem}[Burkholder's inequality, see e.g. \cite{chow}]\label{CRL1.3.1}
Let $\{S_n\}$ be a $L^1$ martingale with $S_0=0$. Let $Q_n=(\sum_{k=1}^{n}(S_k-S_{k-1})^2)^{1/2}$ and
$Q=(\sum_{n=1}^{\infty}(S_n-S_{n-1})^2)^{1/2}$. Then $\forall p>1$,
$$a_p\parallel Q_n\parallel_p\leq\parallel S_n\parallel_p\leq b_p\parallel Q_n\parallel_p,$$
$$a_p\parallel Q\parallel_p\leq\sup_n\parallel S_n\parallel_p\leq b_p\parallel Q\parallel_p,$$
where $a_p=(p-1)/18p^{3/2}$, $b_p=18p^{3/2}/(p-1)^{1/2}$.
\end{lem}

The following lemma gives the relations  between $\sup_n\mathbb{E}|\hat{A}_n|^p$ and
$\mathbb{E}|W_{n+1}-W_n|^p$ that we shall use later.

\begin{lem}\label{CRL1.3.3}
Let $p>1$and write $a_p=(p-1)/18p^{3/2}$, $b_p=18p^{3/2}/(p-1)^{1/2}$. Then:
\begin{itemize}
\item[] (i) For $p\in(1,2)$ and $N\geq1$,
\begin{equation}\label{CRE1.3.2}
a_pN^{p/2-1}\sum_{n=0}^{N-1}\rho^{pn}\mathbb{E}|W_{n+1}-W_n|^p\leq\sup_n\mathbb{E}|\hat{A}_n|^p\leq
b_p\sum_{n=0}^{\infty}\rho^{pn}\mathbb{E}|W_{n+1}-W_n|^p.
\end{equation}

\item[] (ii) For $p=2$,
\begin{equation}\label{CRE1.3.3}
\sup_n\mathbb{E}|\hat{A}_n|^2=\sum_{n=0}^{\infty}\rho^{2n}\mathbb{E}|W_{n+1}-W_n|^2.
\end{equation}

\item[] (iii) For $p>2$,
\begin{equation}\label{CRE1.3.4}
a_p\sum_{n=0}^{\infty}\rho^{pn}\mathbb{E}|W_{n+1}-W_n|^p\leq
\sup_n\mathbb{E}|\hat{A}_n|^p \leq
b_p\left(\sum_{n=0}^{\infty}\rho^{2n}(\mathbb{E}|W_{n+1}-W_n|^p)^{2/p}\right)^{p/2}.
\end{equation}
\end{itemize}
\end{lem}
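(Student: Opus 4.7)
\emph{Plan.} The idea is to combine Burkholder's inequality (Lemma \ref{CRL1.3.1}) applied to the martingale $\{(\hat A_n;\mathcal{F}_{n+1})\}$ with three different ways of comparing $\ell^p$ and $\ell^2$ norms of the martingale differences, depending on whether $p<2$, $p=2$ or $p>2$. Throughout, set
$$
Q_n=\left(\sum_{k=0}^{n}\rho^{2k}(W_{k+1}-W_k)^2\right)^{1/2},
\qquad
Q=\left(\sum_{k=0}^{\infty}\rho^{2k}(W_{k+1}-W_k)^2\right)^{1/2},
$$
so that Lemma \ref{CRL1.3.1} gives $a_p\|Q_n\|_p\le\|\hat A_n\|_p\le b_p\|Q_n\|_p$ and analogously $a_p\|Q\|_p\le\sup_n\|\hat A_n\|_p\le b_p\|Q\|_p$ (possibly up to replacing $a_p,b_p$ by $a_p^p,b_p^p$ after raising to the $p$th power, which only affects the constants).

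\textbf{Case $p=2$.} Since the martingale differences $\rho^k(W_{k+1}-W_k)$ are orthogonal in $L^2$, one has $\mathbb{E}|\hat A_n|^2=\sum_{k=0}^{n}\rho^{2k}\mathbb{E}|W_{k+1}-W_k|^2$. This sequence is non-decreasing in $n$, so taking the supremum yields (\ref{CRE1.3.3}) directly, with no recourse to Burkholder.

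\textbf{Case $p\in(1,2)$.} For the upper bound, use that $x\mapsto x^{p/2}$ is subadditive on $[0,\infty)$: $(\sum a_k)^{p/2}\le\sum a_k^{p/2}$. Applied to $a_k=\rho^{2k}(W_{k+1}-W_k)^2$ this gives $Q^{p}\le\sum_{k=0}^{\infty}\rho^{pk}|W_{k+1}-W_k|^p$; take expectation and invoke Burkholder. For the lower bound, I use Hölder with exponents $2/p$ and $2/(2-p)$ on the finite sum of $N$ terms to get the reverse power-mean inequality
$$
\left(\sum_{k=0}^{N-1}\rho^{2k}(W_{k+1}-W_k)^2\right)^{p/2}\ge N^{p/2-1}\sum_{k=0}^{N-1}\rho^{pk}|W_{k+1}-W_k|^p,
$$
take expectation, and use $\mathbb{E}|\hat A_{N-1}|^p\ge a_p^p\,\mathbb{E}Q_{N-1}^p\le\sup_n\mathbb{E}|\hat A_n|^p$ from Burkholder. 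The factor $N^{p/2-1}$ in (\ref{CRE1.3.2}) is precisely this Hölder price, and is the one place where the argument is not completely routine.

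\textbf{Case $p>2$.} Now $p/2>1$, so the two inequalities flip. For the lower bound I use $(\sum a_k)^{p/2}\ge\sum a_k^{p/2}$ (valid for any $\alpha\ge1$ and $a_k\ge0$), which gives $Q^p\ge\sum_{k}\rho^{pk}|W_{k+1}-W_k|^p$; taking expectation and applying the lower half of Burkholder yields the left-hand inequality of (\ref{CRE1.3.4}). For the upper bound, I apply Minkowski's inequality in $L^{p/2}$ to the sum $\sum_k \rho^{2k}(W_{k+1}-W_k)^2$:
$$
\left\|\sum_k\rho^{2k}(W_{k+1}-W_k)^2\right\|_{p/2}\le\sum_k\rho^{2k}\bigl\|(W_{k+1}-W_k)^2\bigr\|_{p/2}=\sum_k\rho^{2k}\bigl(\mathbb{E}|W_{k+1}-W_k|^p\bigr)^{2/p},
$$
which, raised to the power $p/2$, bounds $\mathbb{E}Q^p$ and then, via Burkholder, $\sup_n\mathbb{E}|\hat A_n|^p$, giving the right-hand inequality of (\ref{CRE1.3.4}).

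No genuine obstacle is anticipated; the work is purely a bookkeeping of constants and the correct choice among the three comparisons (subadditivity / orthogonality / Minkowski) according to the position of $p$ relative to $2$. The only mildly delicate point is the lower bound for $p\in(1,2)$, whose dependence on $N$ through $N^{p/2-1}$ must be tracked carefully and is why the left-hand side of (\ref{CRE1.3.2}) is stated for fixed $N$ rather than as an infinite sum.
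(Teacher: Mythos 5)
Your proof is correct and follows essentially the same route as the paper: apply Burkholder's inequality (Lemma \ref{CRL1.3.1}) to the martingale $\{\hat A_n\}$, then compare $\mathbb{E}Q^p$ with the desired sums by subadditivity of $x^{p/2}$ when $p<2$, by orthogonality when $p=2$, and by the $L^{p/2}$ triangle inequality together with superadditivity of $x^{p/2}$ when $p>2$. The one cosmetic difference is that you derive the reverse power-mean inequality $\bigl(\sum_{k=0}^{N-1}a_k\bigr)^{p/2}\ge N^{p/2-1}\sum_{k=0}^{N-1}a_k^{p/2}$ via H\"older with exponents $2/p$ and $2/(2-p)$, whereas the paper deduces it from Jensen's inequality for the concave function $x^{p/2}$; these are the same estimate proved two standard ways. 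Your parenthetical remark about the constants is also apt: since Lemma \ref{CRL1.3.1} is stated in $\|\cdot\|_p$ norms, raising to the $p$th power technically yields $a_p^p,b_p^p$ rather than $a_p,b_p$, a minor imprecision already present in the paper's statement and proof that does not affect any subsequent use of the lemma.
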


\begin{proof}Firstly,  (\ref{CRE1.3.3}) is obvious by the orthogonality of martingale. The upper bound in (\ref{CRE1.3.2}) and (\ref{CRE1.3.4}) are directly from Burkholder's inequality. The lower bound in  (\ref{CRE1.3.2}) can be obtained following similar arguments in \cite{IK} (p.25).
\end{proof}

\noindent\textbf{Remark.} For a BPRE, notice that $\{W_n\}$ is a martingale wit  both under $\mathbb{P}_\xi$ (for every $\xi$) and under $\mathbb{P}$, and the same is true for $\{\hat{A}_n\}$. Thus Lemmas \ref{CRL1.3.2} and
\ref{CRL1.3.3} hold for both expectations $\mathbb{E}_\xi$ and $\mathbb{E}$.
\\*

Let
\begin{equation}
\bar{X}_{n,i}=\frac{X_{n,i}}{m_n},\qquad\bar{X}_n=\bar{X}_{n,1}.
\end{equation}
From the definitions of $Z_n$ and $W_n$, we have
\begin{equation}\label{CRE1.1.1}
W_{n+1}-W_n=\frac{1}{P_n}\sum_{i=1}^{Z_n}(\bar{X}_{n,i}-1).
\end{equation}
This fact leads us to estimate $\mathbb{E}|W_{n+1}-W_n|^p$ through the moments of  $\bar X_n -1$.

\begin{lem}\label{CRL1.4.1}
Let $p>1$, $n\geq0$ and write $a_p=(p-1)/18p^{3/2}$, $b_p=18p^{3/2}/(p-1)^{1/2}$. Then:
\begin{itemize}
\item[](i) For $p\in(1,2)$ and $r\in[p,2]$,
\begin{equation}\label{CRE1.4.1}
a_pP_n^{-p/2}\mathbb{E}W_n^{p/2}\mathbb{E}|\bar{X}_n-1|^p\leq \mathbb{E}|W_{n+1}-W_n|^p\leq
b_pP_n^{p(1/r-1)}(\mathbb{E}|\bar{X}_n-1|^r)^{p/r}.
\end{equation}

\item[] (ii) For $p=2$,
\begin{equation}\label{CRE1.4.2}
\mathbb{E}|W_{n+1}-W_n|^2=P_n^{-1}\mathbb{E}|\bar{X}_n-1|^2.
\end{equation}

\item[](iii) For $p>2$ and $r\in[2,p]$,
\begin{equation}\label{CRE1.4.3}
a_pP_n^{p(1/r-1)}(\mathbb{E}|\bar{X}_n-1|^r)^{p/r}\leq \mathbb{E}|W_{n+1}-W_n|^p\leq
b_pP_n^{-p/2}\mathbb{E}W_n^{p/2}\mathbb{E}|\bar{X}_n-1|^p.
\end{equation}
\end{itemize}
\end{lem}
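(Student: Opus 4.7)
The starting point is the martingale-difference identity (\ref{CRE1.1.1}), $W_{n+1}-W_n=P_n^{-1}S_{Z_n}$, where $S_{Z_n}=\sum_{i=1}^{Z_n}(\bar X_{n,i}-1)$. Conditional on $\mathcal F_n$, the integer $Z_n$ is known and $\{\bar X_{n,i}-1\}_{i\ge1}$ are i.i.d., centred, with the common law of $\bar X_n-1$. My plan is to apply Burkholder's inequality (Lemma \ref{CRL1.3.1}) conditionally on $\mathcal F_n$ to the martingale $S_k=\sum_{i=1}^{k}(\bar X_{n,i}-1)$, $0\le k\le Z_n$, which yields two-sided bounds on $\mathbb E[|S_{Z_n}|^p\mid\mathcal F_n]$ in terms of $\mathbb E[Q^{p/2}\mid\mathcal F_n]$, with $Q=\sum_{i=1}^{Z_n}(\bar X_{n,i}-1)^2$. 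The remaining work is to convert $Q^{p/2}$ into a sum of $r$-th powers using $\ell^q$-norm monotonicity and power-mean inequalities for $x^{p/2}$, and then to take expectations and replace $\mathbb E Z_n^{\alpha}$ by $P_n^{\alpha}$ via Jensen (or by $P_n^{p/2}\,\mathbb E W_n^{p/2}$ when $\alpha=p/2$). Case (ii), $p=2$, is immediate from $L^2$-orthogonality: $\mathbb E[S_{Z_n}^2\mid\mathcal F_n]=Z_n\,\mathbb E|\bar X_n-1|^2$, and $\mathbb E Z_n=P_n$ yields (\ref{CRE1.4.2}) at once.

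For case (i), $p\in(1,2)$, the upper bound with $r\in[p,2]$ is obtained by combining the finite-dimensional $\ell^q$-norm monotonicity $(\sum a_i^2)^{1/2}\le(\sum a_i^r)^{1/r}$ (valid for $r\le2$) with concave Jensen for $x^{p/r}$ (since $p/r\le1$):
\[
\mathbb E[Q^{p/2}\mid\mathcal F_n]\le\mathbb E\!\left[\Big(\sum_{i=1}^{Z_n}|\bar X_{n,i}-1|^r\Big)^{p/r}\,\Big|\,\mathcal F_n\right]\le\big(Z_n\,\mathbb E|\bar X_n-1|^r\big)^{p/r};
\]
unconditional concave Jensen then gives $\mathbb E Z_n^{p/r}\le P_n^{p/r}$. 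The matching lower bound uses the reverse power-mean $(\sum a_i)^{p/2}\ge k^{p/2-1}\sum a_i^{p/2}$ (concavity of $x^{p/2}$) applied with $a_i=(\bar X_{n,i}-1)^2$ and $k=Z_n$, which produces $\mathbb E[Q^{p/2}\mid\mathcal F_n]\ge Z_n^{p/2}\,\mathbb E|\bar X_n-1|^p$; the identity $\mathbb E Z_n^{p/2}=P_n^{p/2}\mathbb E W_n^{p/2}$ then closes the bound.

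Case (iii), $p>2$, is the mirror image. The upper bound in (\ref{CRE1.4.3}) comes from the convex power-mean $(\sum a_i)^{p/2}\le k^{p/2-1}\sum a_i^{p/2}$ (valid for $p/2\ge1$) applied to $Q^{p/2}$, which yields $\mathbb E[Q^{p/2}\mid\mathcal F_n]\le Z_n^{p/2}\,\mathbb E|\bar X_n-1|^p$. For the lower bound with $r\in[2,p]$ I would first drop to exponent $r$ by conditional Jensen $(\mathbb E[|S_{Z_n}|^p\mid\mathcal F_n])^{1/p}\ge(\mathbb E[|S_{Z_n}|^r\mid\mathcal F_n])^{1/r}$, then apply Burkholder at exponent $r$ together with $(\sum a_i)^{r/2}\ge\sum a_i^{r/2}$ for $r\ge2$ to get $\mathbb E[|S_{Z_n}|^r\mid\mathcal F_n]\ge C\,Z_n\,\mathbb E|\bar X_n-1|^r$, and finish with convex Jensen $\mathbb E Z_n^{p/r}\ge P_n^{p/r}$ (valid since $p/r\ge1$). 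Multiplying each of the four estimates by $P_n^{-p}$ produces (\ref{CRE1.4.1})--(\ref{CRE1.4.3}). The main source of friction is bookkeeping: at every step one must align the concavity or convexity direction of $x^{p/2}$ or $x^{p/r}$ with the allowed range of $r$ and with which side of Burkholder is being invoked; the constants $a_p,b_p$ are inherited from Lemma \ref{CRL1.3.1} up to a possible absorption of $p$-th powers into the notation.
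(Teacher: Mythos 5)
Your proposal is correct and follows essentially the same route as the paper: condition on $\mathcal F_n$, apply Burkholder to the within-generation martingale $S_k=\sum_{i=1}^k(\bar X_{n,i}-1)$, and then use power-mean and Jensen inequalities to convert $\mathbb E\bigl[\bigl(\sum_{i\le Z_n}(\bar X_{n,i}-1)^2\bigr)^{p/2}\bigr]$ into the quantities in (\ref{CRE1.4.1})--(\ref{CRE1.4.3}); the paper phrases the same step via the unconditional martingale $P_n^{-1}\sum_{i\le k}(\bar X_{n,i}-1)\mathbf 1_{\{Z_n\ge i\}}$ with filtration $\mathcal G_k=\sigma(\mathcal F_n,X_{n,1},\dots,X_{n,k})$, which is equivalent. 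The one small deviation is in the lower bound of (iii): you drop from exponent $p$ to $r$ by Lyapunov and re-invoke Burkholder at exponent $r$, which produces a constant $a_r^p$ depending on $r$; the paper instead stays with Burkholder at $p$ and uses $\bigl(\sum a_i^2\bigr)^{p/2}=\bigl(\bigl(\sum a_i^2\bigr)^{r/2}\bigr)^{p/r}\ge\bigl(\sum a_i^r\bigr)^{p/r}$ followed by convex Jensen for $x^{p/r}$, which keeps the constant $p$-dependent only as stated. This is a harmless difference for the downstream applications (where $r$ is fixed), but your version proves the lemma with a slightly weaker, $r$-dependent constant.
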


\begin{proof}
We first prove (ii). By (\ref{CRE1.1.1}),
\begin{eqnarray*}
\mathbb{E}|W_{n+1}-W_n|^2
&=&\frac{1}{P_n^2}\mathbb{E}\sum_{i=1}^{Z_n}(\bar{X}_{n,i}-1)\sum_{j=1}^{Z_n}(\bar{X}_{n,j}-1)\\
&=&\frac{1}{P_n^2}\mathbb{E}\sum_{i=1}^{Z_n}(\bar{X}_{n,i}-1)^2=P_n^{-1}\mathbb{E}|\bar{X}_n-1|^2.
\end{eqnarray*}
We then prove (i) and (iii).  Let $p>1$. Fix $n\geq0$ and let
 $$
 \text{$S_0=0$, \qquad $S_k=P_n^{-1}\sum_{i=1}^{k}(\bar{X}_{n,i}-1)\mathbf{1}_{\{Z_n\geq i\}}$.}
 $$
Let $\mathcal {G}_0=\mathcal {F}_n$ and $\mathcal {G}_k=\sigma(\mathcal {F}_n,X_{n,i},1\leq i\leq k)$.
It is not difficult to verify that $\{S_k\}$ forms a martingale with respect to $\mathcal {G}_k$
and $\{S_k\}$ is uniformly integrable, so that $\sup_k\mathbb{E}|S_k|^p=\mathbb{E}|S|^p$,
 where $S=\lim\limits_{k\rightarrow\infty}S_k=P_n^{-1}\sum_{i=1}^{Z_n}(\bar{X}_{n,i}-1)=W_{n+1}-W_n$. By Burkholder's inequality,
 $$a_p\mathbb{E}\left|\sum_{k=1}^{\infty} (S_{k}-S_{k-1})^2\right|^{p/2}\leq \mathbb{E}|S|^p
\leq b_p\mathbb{E}\left|\sum_{k=1}^{\infty} (S_{k}-S_{k-1})^2\right|^{p/2},$$ which
means that
\begin{equation}\label{CRE1.4.4}
a_p\mathbb{E}\left|\frac{1}{P_n^2}\sum_{i=1}^{Z_n}(\bar{X}_{n,i}-1)^2\right|^{p/2}\leq
\mathbb{E}|W_{n+1}-W_n|^p \leq
b_p\mathbb{E}\left|\frac{1}{P_n^2}\sum_{i=1}^{Z_n}(\bar{X}_{n,i}-1)^2\right|^{p/2}.
\end{equation}

For $p\in(1,2)$ and $r\in[p,2]$, by the concavity of $x^{r/2}$, $x^{p/r}$ and $x^{p/2}$, we have
\begin{eqnarray}\label{CRE1.4.5}
\mathbb{E}\left|\frac{1}{P_n^2}\sum_{i=1}^{Z_n}(\bar{X}_{n,i}-1)^2\right|^{p/2}
&=&\mathbb{E}\left|\frac{1}{P_n^2}\sum_{i=1}^{Z_n}(\bar{X}_{n,i}-1)^2\right|^{\frac{r}{2}\cdot\frac{p}{r}}\nonumber\\
&\leq&\mathbb{E}\left(P_n^{-r}\sum_{i=1}^{Z_n}|\bar{X}_{n,i}-1|^r\right)^{p/r}\nonumber\\
&\leq&P_n^{p(1/r-1)}(\mathbb{E}|\bar{X}_n-1|^r)^{p/r},
\end{eqnarray}
and
\begin{eqnarray}\label{CRE1.4.6}
\mathbb{E}\left|\frac{1}{P_n^2}\sum_{i=1}^{Z_n}(\bar{X}_{n,i}-1)^2\right|^{p/2}
&\geq&P_n^{-p}\mathbb{E}Z_n^{p/2-1}\sum_{i=1}^{Z_n}|\bar{X}_{n,i}-1|^p\nonumber\\
&=&P_n^{-p/2}\mathbb{E}W_n^{p/2}\mathbb{E}|\bar{X}_{n}-1|^p.
\end{eqnarray}
Combing (\ref{CRE1.4.5}), (\ref{CRE1.4.6}) with (\ref{CRE1.4.4}), we obtain (\ref{CRE1.4.1}).

For $p>2$ and $r\in[2,p]$, since $x^{r/2}$, $x^{p/r}$ and $x^{p/2}$ are convex,   (\ref{CRE1.4.5}) holds with $"\leq"$ replaced by $"\geq"$, while (\ref{CRE1.4.6}) holds with $"\geq"$ replaced by $"\leq"$.
\end{proof}


\subsection{Moments of $\{\hat A_n\}$; Proofs of Theorems \ref{CRT1.3} and \ref{CRC1.5}}\label{CRLPS4}
In this section, we study the $p$th-moment of $\{\hat A_n\}$, and prove Theorems \ref{CRT1.3} and \ref{CRC1.5}.

\begin{pr}[Moments of $\hat A_n$ for BPVE]\label{CRP1.4.1} Let $\rho\geq1$.
\begin{itemize}
\item[] (i) Let $p\in(1,2)$. If
$\sum_n\rho^{pn}P_n^{p(1/r-1)}\bar{m}_n(r)^{p/r}<\infty$ for some
$r\in[p,2]$, then
\begin{equation}\label{CREE4.1}
\sup_n\mathbb{E}|\hat{A}_n|^p<\infty.
\end{equation}
Conversely, if
$\liminf\limits_{n\rightarrow\infty}\frac{\log P_n}{n}>0$ and (\ref{CREE4.1}) holds, then $\sum_n\rho^{pn}P_n^{-s
-p/2}\bar{m}_n(p)<\infty$ for any $s>0$.
\item[](ii) Let $p\geq2$. If $\sum_n\rho^{2n}P_n^{-1}\bar{m}_n(p)^{2/p}<\infty$,
then (\ref{CREE4.1}) holds. Conversely,
if (\ref{CREE4.1}) holds, then for any $r\in[2,p]$,
$\sum_n\rho^{pn}P_n^{p(1/r-1)}\bar{m}_n(r)^{p/r}<\infty$.
\end{itemize}
\end{pr}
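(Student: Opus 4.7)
The plan is to mirror the proof of Theorem \ref{CRT1.3}, applying Lemma \ref{CRL1.3.3} to the martingale $\hat A_n$ in place of the ordinary Burkholder inequality applied to $W_n-1$, and then converting the one-step increments $\mathbb{E}|W_{n+1}-W_n|^p$ into expressions involving $\bar m_n(r)$ via Lemma \ref{CRL1.4.1}. Each half of the proposition then reduces to bookkeeping of exponents of $\rho$ and $P_n$.

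For the sufficient directions: when $p\in(1,2)$ I would chain the upper bound $\sup_n\mathbb{E}|\hat A_n|^p\le b_p\sum_n\rho^{pn}\mathbb{E}|W_{n+1}-W_n|^p$ of Lemma \ref{CRL1.3.3}(i) with $\mathbb{E}|W_{n+1}-W_n|^p\le b_p P_n^{p(1/r-1)}\bar m_n(r)^{p/r}$ from Lemma \ref{CRL1.4.1}(i) and sum. The case $p=2$ is simply the equality coming from Lemmas \ref{CRL1.3.3}(ii) and \ref{CRL1.4.1}(ii). For $p>2$, Lemma \ref{CRL1.3.3}(iii) bounds $\sup_n\mathbb{E}|\hat A_n|^p$ by $b_p\bigl(\sum_n\rho^{2n}(\mathbb{E}|W_{n+1}-W_n|^p)^{2/p}\bigr)^{p/2}$; inserting Lemma \ref{CRL1.4.1}(iii) leaves a factor $(\mathbb{E}W_n^{p/2})^{2/p}$ inside the sum, so to conclude I would show $\sup_n\mathbb{E}W_n^{p/2}<\infty$ by the same dyadic induction on the interval $(2^b,2^{b+1}]$ used in Theorem \ref{CRT1.3}(ii): H\"older's inequality propagates the hypothesis from level $p$ to level $p/2$, and the base case $p/2\in(1,2]$ is supplied by Theorem \ref{CRT1.3}.

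For the necessary directions I would use the lower halves of the same two lemmas. When $p\ge 2$, the bound $\sup_n\mathbb{E}|\hat A_n|^p\ge a_p\sum_n\rho^{pn}\mathbb{E}|W_{n+1}-W_n|^p$ (Lemma \ref{CRL1.3.3}(ii)--(iii)) combined with $\mathbb{E}|W_{n+1}-W_n|^p\ge a_p P_n^{p(1/r-1)}\bar m_n(r)^{p/r}$ (Lemma \ref{CRL1.4.1}(iii)) yields the conclusion uniformly in $r\in[2,p]$. For $p\in(1,2)$ I would instead use the $\rho$-weighted Burkholder bound (\ref{CRE1.4.8}) of Lemma \ref{CRL1.4.2} with parameter $s'=\frac{2s}{2-p}$ (chosen so that $P_n^{s'(p/2-1)}=P_n^{-s}$), together with the lower bound in Lemma \ref{CRL1.4.1}(i), to obtain
\begin{equation*}
\sup_n\mathbb{E}|\hat A_n|^p\ \ge\ C\,\eta^{p/2-1}\,\inf_n\mathbb{E}W_n^{p/2}\sum_n\rho^{pn}P_n^{-s-p/2}\bar m_n(p),
\end{equation*}
where $\eta=\sum_n P_n^{-s'}<\infty$ under the assumption $\liminf_n P_n^{1/n}>0$, and $\inf_n\mathbb{E}W_n^{p/2}>0$ because $(W_n^{p/2})$ is a non-negative supermartingale (by concavity of $x\mapsto x^{p/2}$) decreasing to a limit bounded below by $\mathbb{E}W^{p/2}>0$ in the supercritical regime considered. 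The main obstacle is the dyadic induction for the sufficient direction at $p>2$; beyond that, every step is a direct chaining of Lemmas \ref{CRL1.3.3}, \ref{CRL1.4.1}, \ref{CRL1.4.2}, and the only technical subtlety is the choice $s'=\frac{2s}{2-p}$ that absorbs the extra exponent of $P_n$ arising from the weighted Burkholder inequality.
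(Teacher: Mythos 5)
Your proposal is correct and follows essentially the same route as the paper: chain Lemma \ref{CRL1.3.3} with Lemma \ref{CRL1.4.1} for the sufficient directions, and use the $\rho$-weighted Burkholder bound \eqref{CRE1.4.8} with $s'=\frac{2s}{2-p}$ (together with the lower bound in Lemma \ref{CRL1.4.1}(i)) for the necessity when $p\in(1,2)$. The only small stylistic difference is at $p>2$: rather than rerunning the dyadic induction, the paper simply notes that since $\rho>1$ the hypothesis dominates $\sum_nP_n^{-1}\bar m_n(p)^{2/p}<\infty$ and invokes Theorem \ref{CRT1.3}(ii) directly to get $\sup_n\mathbb{E}W_n^p<\infty$ (hence $\sup_n\mathbb{E}W_n^{p/2}<\infty$); and your explicit justification that $\inf_n\mathbb{E}W_n^{p/2}>0$ under the hypothesis (via $L^p$ convergence forcing $\mathbb{E}W=1$) is a detail the paper leaves implicit.
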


\noindent\textbf{Remark}. For $p=2$,
$\sup_n\mathbb{E}|\hat{A}_n|^2=\sum_{n=0}^{\infty}\rho^{2n}P_n^{-1}\bar{m}_n(2)$.
\\*

Before the proof of Proposition \ref{CRP1.4.1}, we give another lower bound  of $\sup_n\mathbb{E}|\hat{A_n}|^p$ for $p\in(1,2)$, which is different from (\ref{CRE1.3.2}).

\begin{lem}\label{CRL1.4.2}
Let $p\in(1,2)$ and $s>0$. If $\eta=\eta(s):=\sum_nP_n^{-s}<\infty$, then writing $a_p=(p-1)/18p^{3/2}$ and $b_p=18p^{3/2}/(p-1)^{1/2}$, we have
\begin{equation}\label{CRE1.4.8}
\sup_n\mathbb{E}|\hat{A_n}|^p\geq
a_p\eta^{p/2-1}\sum_{n=0}^{\infty}\rho^{pn}P_n^{s(p/2-1)}\mathbb{E}|W_{n+1}-W_n|^p.
\end{equation}
\end{lem}

\begin{proof}
Applying Burkholder's inequality and Jensen's inequality, we get
\begin{eqnarray*}
\sup_n\mathbb{E}|\hat A_n|^p&\geq &a_p\mathbb{E}\left|\sum_{n=0}^{\infty}\rho^{2n}(W_{n+1}-W_n)^2\right|^{p/2}\\
&=&a_p\mathbb{E}\left(\sum_{n=0}^{\infty}\frac{1}{\eta P_n^s}(\eta P_n^s\rho^{2n}|W_{n+1}-W_n|^2)\right)^{p/2}\\
&\geq&a_p\mathbb{E}\sum_{n=0}^{\infty}\frac{1}{\eta P_n^s}(\eta P_n^s\rho^{2n}|W_{n+1}-W_n|^2)^{p/2}\\
&=&a_p\eta^{p/2-1}\sum_{n=0}^{\infty}\rho^{pn}P_n^{s(p/2-1)}\mathbb{E}|W_{n+1}-W_n|^p.
\end{eqnarray*}
So (\ref{CRE1.4.8}) is proved.
\end{proof}

\begin{proof}[Proof of Proposition \ref{CRP1.4.1}]
(i) By Lemmas \ref{CRL1.3.3} and  \ref{CRL1.4.1}, for $r\in[p,2]$,
\begin{eqnarray*}
\sup_n\mathbb{E}|\hat{A}_n|^p&\leq&C\sum_{n=0}^{\infty}\rho^{pn}\mathbb{E}|W_{n+1}-W_n|^p\\
&\leq&C\sum_{n=0}^{\infty}\rho^{pn}P_n^{p(1/r-1)}(\mathbb{E}|\bar{X}_n-1|^r)^{p/r}.
\end{eqnarray*}
Here and throughout this paper $C$ denotes a general positive constant (maybe different from line to line).
Hence $\sup_n\mathbb{E}|\hat{A}_n|^p<\infty$,  if
$\sum_n\rho^{pn}P_n^{p(1/r-1)}\bar{m}_n(r)^{p/r}<\infty$ for some
$r\in[p,2]$. Conversely, assume that
$\sup_n\mathbb{E}|\hat{A}_n|^p<\infty$. For any $s>0$, let
$s'=\frac{2s}{2-p}>0$. Since $\eta=\eta(s')<\infty$, by
(\ref{CRE1.4.8}) and Lemma \ref{CRL1.4.1},
$$\sup_n\mathbb{E}|\hat{A}_n|^p\geq C\eta^{p/2-1}\inf_n\mathbb{E}W_n^{p/2}\sum_{n=0}^{\infty}\rho^{pn}P_n^{-s-p/2} \bar{m}_n(p).$$
Thus $\sum_n\rho^{pn}P_n^{-s-p/2} \bar{m}_n(p)<\infty$, $\forall s>0$.

(ii)  For $p=2$, by (\ref{CRE1.3.3}) and (\ref{CRE1.4.2}),
$$\sup_n\mathbb{E}|\hat{A}_n|^2=\sum_{n=0}^{\infty}\rho^{2n}P_n^{-1}\mathbb{E}|\bar{X}_n-1|^2=\sum_{n=0}^{\infty}\rho^{2n}P_n^{-1}\bar{m}_n(2).$$
Thus $\sup_n\mathbb{E}|\hat{A}_n|^2<\infty$ if and only if
$\sum_n\rho^{2n}P_n^{-1}\bar{m}_n(2)<\infty$.

Let $p>2$. We first
assume that
$\sum_n\rho^{2n}P_n^{-1}(\mathbb{E}|\bar{X}_n-1|^p)^{2/p}$ ($=\sum_n\rho^{2n}P_n^{-1}
\bar{m}_n(p)^{2/p}$ ) $<\infty$. By (\ref{CRE1.3.4} ) and
(\ref{CRE1.4.3}),
\begin{eqnarray}\label{CRLPEE41}
\sup_n\mathbb{E}|\hat{A}_n|^p&\leq&C\left(\sum_{n=0}^{\infty}\rho^{2n}(\mathbb{E}|W_{n+1}-W_n|^p)^{2/p}\right)^{p/2}\nonumber\\
&\leq&C\left(\sum_{n=0}^{\infty}\rho^{2n}P_n^{-1}(\mathbb{E}W_n^{p/2})^{2/p}(\mathbb{E}|\bar{X}_n-1|^p)^{2/p}\right)^{p/2}\nonumber\\
&\leq&C\sup_n\mathbb{E}W_n^{p/2}\left(\sum_{n=0}^{\infty}\rho^{2n}P_n^{-1}(\mathbb{E}|\bar{X}_n-1|^p)^{2/p}\right)^{p/2}<\infty,
\end{eqnarray}
provided that $\sup_n\mathbb{E}W_n^{p/2}<\infty$, which holds obviously when $\sup_n\mathbb{E}W_n^{p}<\infty$. Therefore, it suffices to prove that  for every integer $b\geq1$,
\begin{equation}\label{CRE1.4.9}
\sup_n\mathbb{E}W_n^p<\infty \quad \text{if}
\quad\sum_nP_n^{-1}(\mathbb{E}|\bar{X}_n-1|^p)^{2/p}<\infty, \quad
\forall p\in(2^b,2^{b+1}].
\end{equation}
We shall prove (\ref{CRE1.4.9}) by induction on b. For $b=1$, we consider $p\in(2, 2^2]$, so that $p/2\in(1,2]$. By H\"older's inequality,
$$\sum_nP_n^{-1}\mathbb{E}|\bar{X}_n-1|^2\leq\sum_nP_n^{-1}(\mathbb{E}|\bar{X}_n-1|^p)^{2/p}<\infty.$$
Hence $\sup_n\mathbb{E}W_n^2<\infty$, so that  $\sup_n\mathbb{E}W_n^{p/2}<\infty$. By (\ref{CRLPEE41}) (with $\rho=1$),
\begin{equation}\label{CREN1}
\sup_n\mathbb{E}|W_n-1|^p\leq C\sup_n\mathbb{E}W_n^{p/2}\left(\sum_{n=0}^{\infty}P_n^{-1}(\mathbb{E}|\bar{X}_n-1|^p)^{2/p}\right)^{p/2}<\infty.
\end{equation}
So (\ref{CRE1.4.9}) holds for $b=1$. Now assume  that (\ref{CRE1.4.9}) holds for $p\in(2^b,2^{b+1}]$ for some integer $b\geq1$. For $p\in(2^{b+1},2^{b+2}]$,
we have $p/2\in(2^b,2^{b+1}]$. By H\"older's inequality,
$$\sum_nP_n^{-1}(\mathbb{E}|\bar{X}_n-1|^{p/2})^{4/p}\leq\sum_nP_n^{-1}(\mathbb{E}|\bar{X}_n-1|^p)^{2/p}<\infty.$$
Using (\ref{CRE1.4.9}) for $p/2$, we obtain $\sup_n\mathbb{E}W_n^{p/2}<\infty$, so that $\sup_n\mathbb{E}W_n^{p}<\infty$ from (\ref{CREN1}).
Therefore (\ref{CRE1.4.9}) still holds for $p\in(2^{b+1},2^{b+2}]$, which implies that (\ref{CRE1.4.9}) holds for all integers $b\geq1$.

Conversely, assume that $\sup_n\mathbb{E}|\hat{A}_n|^p<\infty$. Notice that by (\ref{CRE1.3.4} ) and
(\ref{CRE1.4.3}), $\forall r\in[2,p]$,
$$\sup_n\mathbb{E}|\hat{A}_n|^p\geq C\sum_{n=0}^{\infty}\rho^{pn}P_n^{p(1/r-1)}(\mathbb{E}|\bar{X}_n-1|^r)^{p/r}.$$
This implies that $\sum_n\rho^{pn}P_n^{p(1/r-1)}
\bar{m}_n(r)^{p/r}<\infty$, $\forall r\in[2,p]$.
\end{proof}

Now we give proofs of Theorems \ref{CRT1.3} and \ref{CRC1.5}.

\begin{proof}[Proof of Theorem \ref{CRT1.3}]
Notice that $W_{n+1}-1=\hat A_n(1)$. Since $W_n\rightarrow W$ in $L^p$ is equivalent to $\sup_n \mathbb E W_n^p<\infty$, Theorem \ref{CRT1.3} is just a consequence of Proposition \ref{CRP1.4.1} with $\rho=1$.
\end{proof}

\begin{proof}[Proof of Theorem \ref{CRC1.5}]
Notice that the assertion  $\sup_n\mathbb{E}|\hat{A}_n|^p<\infty$ is  equivalent to the $L^p$ convergence of $\hat A(\rho)$, which is also equivalent to the $L^p$ convergence of $A(\rho)$  by Lemma \ref{CRL1.3.2}. So Theorem \ref{CRC1.5} is just a consequence of Proposition \ref{CRP1.4.1} with $\rho>1$.
\end{proof}

\section{Quenched moments and quenched $L^p$ convergence rate for BPRE; Proofs of Theorems \ref{CRT2.1.3} and \ref{CRC2.1.5}}\label{CRLPS5}
Let us return to  a BPRE $(Z_n)$. Notice that for each  fixed $\xi$, $(Z_n)$ is a BPVE. So all the results for
BPVE can be directly applied to BPRE by considering the quenched law $\mathbb{P}_\xi$ and the corresponding expectation $\mathbb{E}_\xi$.
The following lemma will be used to prove our theorems for BPRE.

\begin{lem}\label{CRL2.2.1}
Let $(\alpha_n,\beta_n)_{n\geq0}$ be a stationary and ergodic sequence of non-negative random variables. If $\mathbb{E}\log\alpha_0<0$ and  $\mathbb{E}\log^+\beta_0<\infty$, then
\begin{equation}\label{CRE2.2.1}
\sum_{n=0}^{\infty}\alpha_0\cdots\alpha_{n-1}\beta_n<\infty\quad a.s..
\end{equation}
Conversely, we have:
\begin{itemize}
\item[] (a)  if $(\alpha_n,\beta_n)_{n\geq0}$ are $i.i.d.$ and $\mathbb{E}\log\alpha_0\in(-\infty,0)$, then (\ref{CRE2.2.1}) implies that $\mathbb{E}\log^+\beta_0<\infty$;

 \item[](b) if $\mathbb{E}|\log\beta_0|<\infty$, then (\ref{CRE2.2.1}) implies that $\mathbb{E}\log\alpha_0\leq0$.
\end{itemize}
\end{lem}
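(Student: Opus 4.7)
The plan is to take logarithms and apply Birkhoff's ergodic theorem to the partial sums $S_n=\sum_{k=0}^{n-1}\log\alpha_k$ and to the sequence $(\log\beta_n)$, reducing the whole question to a comparison of asymptotic growth rates of $\log(\alpha_0\cdots\alpha_{n-1}\beta_n)=S_n+\log\beta_n$.

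For the forward direction, assuming $\mathbb{E}\log\alpha_0<0$ and $\mathbb{E}\log^+\beta_0<\infty$: the ergodic theorem applied to the stationary ergodic sequence $(\log\alpha_n)$ yields $S_n/n\to\mathbb{E}\log\alpha_0=-2c<0$ a.s. Since $\mathbb{E}\log^+\beta_0<\infty$ and $(\beta_n)$ is stationary, Cesàro averages of $\log^+\beta_n$ converge, forcing $\log^+\beta_n/n\to 0$ a.s. Consequently, for $n$ large enough,
\[
\log\bigl(\alpha_0\cdots\alpha_{n-1}\beta_n\bigr)\;\le\; S_n+\log^+\beta_n \;\le\; -cn,
\]
so the general term is dominated by $e^{-cn}$ and (\ref{CRE2.2.1}) follows by comparison with a geometric series.

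For part (a), we argue by contraposition: assume $(\alpha_n,\beta_n)$ are i.i.d., $\mathbb{E}\log\alpha_0\in(-\infty,0)$, and $\mathbb{E}\log^+\beta_0=\infty$. Since $(\beta_n)$ are i.i.d., the identity $\mathbb{E}\log^+\beta_0\asymp\sum_{n\ge1}\mathbb{P}(\log^+\beta_0>tn)$ (valid for any fixed $t>0$) gives divergence of the series, so by independence and the second Borel--Cantelli lemma $\log\beta_n>tn$ infinitely often a.s., for every $t>0$. Pick $t>|\mathbb{E}\log\alpha_0|$ and $\varepsilon>0$ small; by the SLLN $S_n\ge-(|\mathbb{E}\log\alpha_0|+\varepsilon)n$ eventually a.s., and so
\[
\alpha_0\cdots\alpha_{n-1}\beta_n\;\ge\;\exp\bigl((t-|\mathbb{E}\log\alpha_0|-\varepsilon)\,n\bigr) \quad\text{i.o.},
\]
contradicting the a.s.\ convergence of (\ref{CRE2.2.1}).

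For part (b), assume $\mathbb{E}|\log\beta_0|<\infty$; by the ergodic theorem applied to $(\log\beta_n)$ one gets $\log\beta_n/n\to 0$ a.s. If $\mathbb{E}\log\alpha_0>0$ then $S_n/n$ converges a.s.\ to a strictly positive limit, whence $S_n+\log\beta_n\to+\infty$ and the general term of (\ref{CRE2.2.1}) tends to $+\infty$, a contradiction. The main delicate point of the whole argument is the tail translation used in part (a): going from $\mathbb{E}\log^+\beta_0=\infty$ to $\sum_n\mathbb{P}(\log^+\beta_0>tn)=\infty$ for every $t>0$, together with the independence required for the converse Borel--Cantelli. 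This is the only place where the full i.i.d.\ hypothesis enters (and it is essential, since (a) can fail under mere stationarity/ergodicity).
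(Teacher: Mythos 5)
Your proof is correct and takes essentially the same route as the paper for the forward direction and for part (b): apply the (one‑sided) ergodic theorem to the partial sums $S_n=\sum_{k<n}\log\alpha_k$, use the Ces\`aro‑difference trick to get $\log^+\beta_n/n\to 0$ (resp.\ $\log\beta_n/n\to 0$ under $\mathbb{E}|\log\beta_0|<\infty$), and compare the $n$th term of the series with a geometric one -- the paper phrases this comparison via Cauchy's root test, which is the same computation. The only genuine difference is part (a): the paper simply cites Grintsevichyus (1974) for it, whereas you give a self-contained proof via the second Borel--Cantelli lemma, using the equivalence $\mathbb{E}\log^+\beta_0=\infty \Leftrightarrow \sum_n\mathbb{P}(\log^+\beta_0>tn)=\infty$ for every fixed $t>0$, the independence of the events $\{\log\beta_n>tn\}$, and the SLLN lower bound for $S_n$ to force $\alpha_0\cdots\alpha_{n-1}\beta_n\to\infty$ along a subsequence. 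This is the standard proof of the Grintsevichyus/Kesten-type lemma and is sound; it is also the one place where the full i.i.d.\ hypothesis (rather than mere stationarity and ergodicity) is really used, which you correctly flag.
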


\begin{proof}
The sufficiency is a direct consequence of the ergodic theorem and Cauchy's test for the convergence of series, remarking that if $\mathbb{E}\log\alpha_0<0$ and  $\mathbb{E}\log\max(\beta_0,1)<\infty$, then
$$\limsup_{n\rightarrow\infty}\frac{1}{n}\log (\alpha_0\cdots\alpha_{n-1}\max(\beta_n,1))<0.$$

For the necessity, part (a) was shown in the proof of (\cite{grin},  Theorem 4.1). For part (b), again by  Cauchy's test,
 if (\ref{CRE2.2.1}) holds, then
$$\limsup_{n\rightarrow\infty}(\alpha_0\cdots\alpha_{n-1}\beta_n)^{1/n}\leq1 \quad a.s.,$$
which is equivalent to $$\limsup_{n\rightarrow\infty}(\frac{1}{n}\sum_{i=0}^{n-1}\log\alpha_i+\frac{1}{n}\log \beta_n)\leq0\quad a.s..$$
By the ergodic theorem,
$$\lim_{n\rightarrow\infty}\frac{1}{n}\sum_{i=0}^{n-1}\log\alpha_i=\mathbb{E}\log\alpha_0\qquad a.s.,$$
and
$$\lim_{n\rightarrow\infty}\frac{1}{n}\log \beta_n=\lim_{n\rightarrow\infty}\left(\frac{1}{n}\sum_{i=0}^{n}\log\beta_i-
\frac{1}{n}\sum_{i=0}^{n-1}\log\beta_i\right)=\mathbb{E}\log\beta_0-\mathbb{E}\log\beta_0=0\quad a.s..$$
Hence $\mathbb{E}\log\alpha_0\leq0$.
\end{proof}

\begin{proof}[Proof of Theorem \ref{CRT2.1.3}]
The implications  "(ii) $\Rightarrow$ (iii) $\Rightarrow$ (iv)" are evident.
We first prove that (iv) implies (ii). Notice that for $n\geq1$,
\begin{equation}\label{CRLPEE5.1}
W=\frac{1}{P_n}\sum_{i=1}^{Z_n}W(n,i)\qquad a.s.,
\end{equation}
where under $\mathbb{P}_\xi$, $(W(n,i))_{i\geq1}$ are independent of each other and independent of $Z_n$, with distribution $\mathbb{P}_\xi(W(n,i)\in \cdot)=\mathbb{P}_{T^n\xi}(W\in\cdot)$. Taking conditional expectation  at both sides of (\ref{CRLPEE5.1}), we see that
$$\mathbb{E}_\xi W=\mathbb{E}_{T^n\xi} W\qquad a.s..$$
Therefore, by the ergodicity, $\mathbb{E}_\xi W=c$ a.s. for some constant $c\in[0,\infty]$. As $\mathbb{E}_\xi W^p>0$ a.s., we have $c>0$. Again by (\ref{CRLPEE5.1}) and Jensen's inequality,
$$\mathbb{E}_\xi(W^p|\mathcal{F}_n)\geq\left(\mathbb{E}_\xi\left(\left.\frac{1}{P_n}\sum_{i=1}^{Z_n}W(n,i)\right|\mathcal{F}_n\right)\right)^p=c^pW_n^p\qquad a.s.,$$
so that $$\mathbb{E}_\xi W_n^p\leq c^{-p}\mathbb{E}_\xi W^p\qquad a.s. , \quad \forall n\geq1.$$
Therefore, $\sup_n\mathbb{E}_\xi W_n^p\leq c^{-p}\mathbb{E}_\xi W^p<\infty$ a.s. (so that $c=1$ as then $W_n\rightarrow W$ in $L^p$ under $\mathbb{P}_\xi$).

We next prove that (i) implies (ii). Notice that $\mathbb{E}\log \mathbb{E}_\xi\left(\frac{Z_1}{m_0}\right)^p<\infty$ is equivalent to $\mathbb{E}\log^+\mathbb{E}_\xi|\frac{Z_1}{m_0}-1|^p<\infty$.
By Theorem \ref{CRT1.3}, to prove that $\sup_n\mathbb{E}_\xi W_n^p<\infty\;a.s.$,
it suffices to show that
$$\sum_nP_n^{1-p} \bar{m}_n(p)<\infty \;a.s.\quad if\; p\in(1,2),$$
or
$$\sum_nP_n^{-1} \bar{m}_n(p)^{2/p}<\infty \;a.s. \quad if\; p\geq2.$$
By Lemma \ref{CRL2.2.1}, since $\mathbb{E}\log m_0>0$ and $\mathbb{E}\log^+\bar{m}_0(p)=\mathbb{E}\log^+\mathbb{E}_\xi|\frac{Z_1}{m_0}-1|^p<\infty$,
the two series above converge a.s..

We finally prove that  (ii) implies (i) when the environment is i.i.d..  Assume that $(\xi_n)_{n\geq0}$ are i.i.d,    $\mathbb{E}\log m_0<\infty$ and $\sup_n\mathbb{E}_\xi W_n^p<\infty$ a.s.. Again by Theorem \ref{CRT1.3},
$$\sum_nP_n^{-s-p/2}\bar{m}_n(p)<\infty \;a.s., \forall s>0, \quad if\; p\in(1,2),$$
and
$$\sum_nP_n^{1-p}\bar{m}_n(p)<\infty \;a.s.\quad if\; p\geq2.$$
As $(\xi_n)_{n\geq0}$ are $i.i.d.$ and $\mathbb{E}\log m_0\in(0,\infty)$, by Lemma \ref{CRL2.2.1},
$\mathbb{E}\log^+\mathbb{E}_\xi|\frac{Z_1}{m_0}-1|^p<\infty$, so that $\mathbb{E}\log
\mathbb{E}_\xi\left(\frac{Z_1}{m_0}\right)^p<\infty$.
\end{proof}

\begin{pr}[Quenched moments of $\hat A_n$]\label{CRP2.3.1}
Let  $\rho\geq1$ and $m=\exp({\mathbb{E}\log m_0})>1$.
\begin{itemize}
\item[](i) Let $p\in(1,2)$. If $\mathbb{E}\log^+\mathbb{E}_\xi|\frac{Z_1}{m_0}-1|^r<\infty$ and $\rho< m^{1-1/r} $ for some $r\in[p,2]$,
 then
\begin{equation}\label{CREE5.1}
\sup_n\mathbb{E}_\xi|\hat{A}_n|^p<\infty\quad a.s..
\end{equation}
Conversely, if $\mathbb{E}\left|\log \mathbb{E}_\xi|\frac{Z_1}{m_0}-1|^p\right|<\infty$ and (\ref{CREE5.1}) holds,  then $\rho\leq m^{1/2}$.

\item[](ii) Let $p\geq2$. If $\mathbb{E}\log^+\mathbb{E}_\xi|\frac{Z_1}{m_0}-1|^p<\infty$ and $\rho< m^{1/2}$, then (\ref{CREE5.1}) holds. Conversely, if $\mathbb{E}\left|\log \mathbb{E}_\xi|\frac{Z_1}{m_0}-1|^r\right|<\infty$ for some $r\in[2,p]$ and (\ref{CREE5.1}) holds, then $\rho\leq m^{1-1/r} $.
\end{itemize}
\end{pr}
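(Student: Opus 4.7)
The plan is to apply Proposition \ref{CRP1.4.1}, which handles the general BPVE case, to the quenched law $\mathbb{P}_\xi$, and then translate the resulting series conditions into moment conditions on the random environment via Lemma \ref{CRL2.2.1}. The key observation is that $P_n=\prod_{i=0}^{n-1}m_i$, so every expression of the form $\rho^{an}P_n^{b}$ factors as a product $\alpha_0\cdots\alpha_{n-1}$ with $\alpha_i=\rho^{a}m_i^{b}$, a stationary ergodic sequence. Thus the series appearing in Proposition \ref{CRP1.4.1} fall exactly into the framework of Lemma \ref{CRL2.2.1}.

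For the sufficiency in (i), I would fix $r\in[p,2]$ with $\rho<m^{1-1/r}$, set $\alpha_i=\rho^{p}m_i^{p(1/r-1)}$ and $\beta_n=\bar{m}_n(r)^{p/r}$, and compute
$$\mathbb{E}\log\alpha_0=p\log\rho-p(1-1/r)\log m<0,\qquad \mathbb{E}\log^+\beta_0=(p/r)\,\mathbb{E}\log^+\bar{m}_0(r)<\infty.$$
Lemma \ref{CRL2.2.1} then yields $\sum_n\rho^{pn}P_n^{p(1/r-1)}\bar{m}_n(r)^{p/r}<\infty$ a.s., and the sufficiency part of Proposition \ref{CRP1.4.1}(i) applied under $\mathbb{P}_\xi$ closes the step. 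The sufficiency in (ii) is analogous with $\alpha_i=\rho^{2}m_i^{-1}$ and $\beta_n=\bar{m}_n(p)^{2/p}$: the assumption $\rho<m^{1/2}$ gives $\mathbb{E}\log\alpha_0=2\log\rho-\log m<0$, while $\mathbb{E}\log^+\bar{m}_0(p)<\infty$ handles $\beta_0$, and Proposition \ref{CRP1.4.1}(ii) concludes.

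For the necessity in (i), the first observation is that $\mathbb{E}\log m_0>0$ combined with Birkhoff's ergodic theorem gives $n^{-1}\log P_n\to\log m>0$ a.s., so $\liminf_n P_n^{1/n}=m>1$ a.s., which is the hypothesis required by the converse part of Proposition \ref{CRP1.4.1}(i). Thus (\ref{CREE5.1}) forces $\sum_n\rho^{pn}P_n^{-s-p/2}\bar{m}_n(p)<\infty$ a.s.\ for every $s>0$. Setting $\alpha_i=\rho^{p}m_i^{-s-p/2}$ and $\beta_n=\bar{m}_n(p)$, the hypothesis $\mathbb{E}\bigl|\log\mathbb{E}_\xi|\tfrac{Z_1}{m_0}-1|^p\bigr|<\infty$ ensures $\mathbb{E}|\log\beta_0|<\infty$, so Lemma \ref{CRL2.2.1}(b) yields $\mathbb{E}\log\alpha_0\leq 0$, i.e.\ $p\log\rho\leq(s+p/2)\log m$ for every $s>0$; letting $s\downarrow 0$ gives $\rho\leq m^{1/2}$. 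The necessity in (ii) is completely parallel: Proposition \ref{CRP1.4.1}(ii) gives $\sum_n\rho^{pn}P_n^{p(1/r-1)}\bar{m}_n(r)^{p/r}<\infty$ a.s.\ for the given $r\in[2,p]$, and Lemma \ref{CRL2.2.1}(b) with $\alpha_i=\rho^{p}m_i^{p(1/r-1)}$ and $\beta_n=\bar{m}_n(r)^{p/r}$ delivers $p\log\rho+p(1/r-1)\log m\leq 0$, i.e.\ $\rho\leq m^{1-1/r}$.

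No genuine obstacle is expected: the structure of the argument is entirely dictated by the two tools already in hand, namely Proposition \ref{CRP1.4.1} for the BPVE side and Lemma \ref{CRL2.2.1} for the stationary ergodic series criterion, and the reduction is the purely algebraic identity $\rho^{an}P_n^{b}=\prod_i(\rho^{a}m_i^{b})$. The only mildly delicate point is that Lemma \ref{CRL2.2.1}(b) requires two-sided integrability of $\log\beta_0$; this is precisely why the converse statements in (i) and (ii) carry the stronger hypothesis $\mathbb{E}\bigl|\log\mathbb{E}_\xi|\tfrac{Z_1}{m_0}-1|^{p}\bigr|<\infty$ (respectively with $r$) rather than only a $\log^+$ bound.
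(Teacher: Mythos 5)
Your proof is correct and follows essentially the same route as the paper: applying Proposition \ref{CRP1.4.1} under the quenched law $\mathbb{P}_\xi$, then translating the resulting a.s.\ series conditions via Lemma \ref{CRL2.2.1} with the factorisation $\rho^{an}P_n^b=\prod_i(\rho^a m_i^b)$. Your explicit verification that $\liminf_n P_n^{1/n}=m>1$ a.s.\ (needed for the converse of Proposition \ref{CRP1.4.1}(i)) and your remark on the two-sided $\log$-integrability required by Lemma \ref{CRL2.2.1}(b) are both points the paper uses implicitly.
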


\begin{proof}
 (i) Let $p\in(1,2)$. Suppose that $\mathbb{E}\log^+\mathbb{E}_\xi|\frac{Z_1}{m_0}-1|^r<\infty$ and $\rho< m^{1-1/r} $ for some $r\in[p,2]$.
Then by Lemma \ref{CRE2.2.1}, the series $\sum_n\rho^{pn}P_n^{p(1/r-1)} \bar{m}_n(r)^{p/r}<\infty$ a.s.. Thus $\sup_n\mathbb{E}_\xi|\hat{A}_n|^p<\infty$ a.s.
by Proposition \ref{CRP1.4.1}.

Conversely, suppose that $\mathbb{E}\left|\log \mathbb{E}_\xi|\frac{Z_1}{m_0}-1|^p\right|<\infty$ and $\sup_n\mathbb{E}_\xi|\hat{A}_n|^p<\infty$ a.s.. By Proposition \ref{CRP1.4.1},
we have $\forall s>0$, $\sum_n\rho^{pn}P_n^{-s-p/2} \bar{m}_n(p)<\infty$ a.s.. Hence by Lemma \ref{CRE2.2.1},
 $\rho\leq m^{1/2+s/p}$. Letting $s\rightarrow0$, we get $\rho\leq m^{1/2}$.

(ii) Let $p\geq2$. Suppose that $\mathbb{E}\log^+\mathbb{E}_\xi|\frac{Z_1}{m_0}-1|^p<\infty$ and $\rho< m^{1/2}$. Then by Lemma \ref{CRE2.2.1}, the series
$\sum_n\rho^{2n}P_n^{-1}\bar{m}_n(p)^{2/p}<\infty\;a.s.$, which implies that $\sup_n\mathbb{E}_\xi|\hat{A}_n|^p<\infty$ a.s. by Proposition \ref{CRP1.4.1}.

Conversely, suppose that $\mathbb{E}\left|\log \mathbb{E}_\xi|\frac{Z_1}{m_0}-1|^r\right|<\infty$ for some $r\in[2,p]$ and $\sup_n\mathbb{E}_\xi|\hat{A}_n|^p<\infty$ a.s.. Proposition \ref{CRP1.4.1} shows that $\sum_n\rho^{pn}P_n^{p(1/r-1)} \bar{m}_n(r)^{p/r}<\infty$ a.s.,
which implies that $\rho\leq m^{1-1/r} $ by Lemma \ref{CRE2.2.1}.
\end{proof}

 By  the relations of $\hat A_n$, $A(\rho)$ and $\rho^n(W-W_n)$ (discussed at the beginning of Section \ref{CRLPS3}), together with  Proposition \ref{CRP2.3.1},  we immediately obtain  the following criteria for the  quenched  $L^p$  convergence rate of $W_n$.

\begin{thm}[Exponential  rate of quenched $L^p$ convergence of $W_n$]\label{CRC5.5}  Let $\rho>1$ and $m=\exp({\mathbb{E}\log m_0})>1$.
\begin{itemize}
\item[] (i) Let $p\in(1,2)$. If $\mathbb{E}\log
\mathbb{E}_\xi\left(\frac{Z_1}{m_0}\right)^r<\infty$ and $ \rho<m^{1-1/r }$
for some $r\in[p,2]$, then
\begin{equation}\label{CRE2.1.1}
(\mathbb{E}_\xi|W-W_n|^p)^{1/p}=o(\rho^{-n})\qquad a.s..
\end{equation}
 Conversely, if $\mathbb{E}\left|\log
\mathbb{E}_\xi\left|\frac{Z_1}{m_0}-1\right|^p\right|<\infty$ and
(\ref{CRE2.1.1}) holds, then $\rho\leq m^{1/2}$.
\item[] (ii) Let $p\geq2$. If $\mathbb{E}\log \mathbb{E}_\xi\left(\frac{Z_1}{m_0}\right)^p<\infty$
and $\rho<m^{1/2}$, then (\ref{CRE2.1.1}) holds. Conversely, if
$\mathbb{E}\left|\log \mathbb{E}_\xi\left|\frac{Z_1}{m_0}-1\right|^r\right|<\infty$
for some $r\in[2,p]$ and (\ref{CRE2.1.1}) holds, then
$\rho\leq m^{1-1/r}$.
\end{itemize}
\end{thm}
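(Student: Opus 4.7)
The plan is to deduce Theorem \ref{CRC5.5} directly from Theorem \ref{CRT2.1.4} by means of the standard equivalences between the quenched $L^p$ convergence of the series $A(\rho)=\sum_{n\geq 0}\rho^n(W-W_n)$ and the decay rate $(\mathbb{E}_\xi|W-W_n|^p)^{1/p}=o(\rho^{-n})$. Since for each fixed $\xi$ the BPRE is a BPVE, the observations preceding Theorem \ref{CRT2.1.4} apply fiber-wise in $\xi$, so no new probabilistic input is required beyond Theorem \ref{CRT2.1.4}.

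For the sufficiency in part (i), assume $\mathbb{E}\log\mathbb{E}_\xi(Z_1/m_0)^r<\infty$ and $\rho<m^{1-1/r}$ for some $r\in[p,2]$. Theorem \ref{CRT2.1.4}(i) gives that $A(\rho)$ converges in $L^p$ under $\mathbb{P}_\xi$ for almost every $\xi$. The general term of any $L^p$-convergent series tends to $0$ in $L^p$, so $\rho^n(\mathbb{E}_\xi|W-W_n|^p)^{1/p}\to 0$ a.s., which is precisely (\ref{CRE2.1.1}). Part (ii) is identical, invoking Theorem \ref{CRT2.1.4}(ii).

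For the necessity in (i), assume $\mathbb{E}\bigl|\log\mathbb{E}_\xi\bigl|Z_1/m_0-1\bigr|^p\bigr|<\infty$ and (\ref{CRE2.1.1}). Fix $\rho_1\in(1,\rho)$. Since $\rho^n(\mathbb{E}_\xi|W-W_n|^p)^{1/p}\to 0$ a.s.\ (and is in particular bounded in $n$) while $\sum_n(\rho_1/\rho)^n<\infty$, we have
\begin{equation*}
\sum_{n\geq 0}\rho_1^n(\mathbb{E}_\xi|W-W_n|^p)^{1/p}=\sum_{n\geq 0}(\rho_1/\rho)^n\cdot\rho^n(\mathbb{E}_\xi|W-W_n|^p)^{1/p}<\infty\quad\text{a.s.,}
\end{equation*}
so the series $A(\rho_1)$ converges absolutely in $L^p(\mathbb{P}_\xi)$, hence in $L^p(\mathbb{P}_\xi)$, for almost every $\xi$. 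The converse in Theorem \ref{CRT2.1.4}(i) then forces $\rho_1\leq m^{1/2}$; letting $\rho_1\nearrow\rho$ yields $\rho\leq m^{1/2}$. The converse in (ii) is entirely parallel: the same absolute-summability argument provides the $L^p$ convergence of $A(\rho_1)$ for every $\rho_1\in(1,\rho)$, the converse in Theorem \ref{CRT2.1.4}(ii) gives $\rho_1\leq m^{1-1/r}$, and passing to the limit produces the claimed bound.

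There is no substantive technical obstacle; the proof is essentially a translation lemma between the $L^p$ convergence of the weighted series and the pointwise $o(\rho^{-n})$ rate. The only point requiring care is the strict inequality $\rho_1<\rho$ in the converse direction: one cannot hope to conclude the $L^p$ convergence of $A(\rho)$ itself from (\ref{CRE2.1.1}), but the geometric gap $(\rho_1/\rho)^n$ makes the absolute summability immediate and costs only an arbitrarily small loss when one takes $\rho_1\nearrow\rho$ at the end.
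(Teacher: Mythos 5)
Your proof is correct and follows exactly the paper's own route: the paper derives Theorem \ref{CRC5.5} from Theorem \ref{CRT2.1.4} via the same two translation facts (convergence of $A(\rho)$ in $L^p(\mathbb{P}_\xi)$ forces its general term $\rho^n\|W-W_n\|_{L^p(\mathbb{P}_\xi)}\to 0$, and conversely the $o(\rho^{-n})$ rate yields $L^p(\mathbb{P}_\xi)$ convergence of $A(\rho_1)$ for every $\rho_1\in(1,\rho)$). The handling of the strict inequality by introducing $\rho_1<\rho$ and then letting $\rho_1\nearrow\rho$ matches the paper's remark preceding the theorem.
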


\begin{proof}[Proof of  Theorem \ref{CRC2.1.5}]
The assertion (a) is a direct consequence of Theorem \ref{CRC5.5}(i) with $r=p$ for $p\in(1,2)$  and  Theorem \ref{CRC5.5}(ii) for $p\geq 2$.

For the assertion (b), notice that the condition $\mathbb{E}\log^+
\mathbb{E}_\xi\left|\frac{Z_1}{m_0}-1\right|^{p\vee2}<\infty$ ensures that $\mathbb{E}\log^+
\mathbb{E}_\xi\left|\frac{Z_1}{m_0}-1\right|^{p}<\infty$ and $\mathbb{E}\log^+
\mathbb{E}_\xi\left|\frac{Z_1}{m_0}-1\right|^{2}<\infty$. If $\rho<m^{1/2}$, applying Theorem \ref{CRC5.5}(i) with $r=2$ for $p\in(1,2)$  and  Theorem \ref{CRC5.5}(ii) for $p\geq 2$, we have $$\lim_{n\rightarrow\infty}\rho^n(\mathbb{E}_\xi|W-W_n|^p)^{1/p}=0\qquad a.s..$$

Now consider the case where $\rho>m^{1/2}$. Denote
$$D=\{\xi: \lim_{n\rightarrow\infty}\rho^n(\mathbb{E}_\xi|W-W_n|^p)^{1/p}=0\}.$$
First, we show that $\mathbb{P}(D)=0$ or $1$. By the ergodicity, it suffices to show that $T^{-1}D=D$ a.s.. By (\ref{CRLPEE5.1}),
$$W=\frac{1}{m_0}\sum_{i=1}^{Z_1}W(1,i)\qquad a.s..$$
Similarly, we can write $W_n$ as
\begin{equation}\label{CRE2.4.7}
W_n=\frac{1}{m_0}\sum_{i=0}^{Z_1}W_{n-1}(1,i)\qquad a.s.,
\end{equation}
where $W_{n}{(k,i)}=\frac{Z_{n}{(k,i)}}{m_k\cdots m_{k+n-1}}$ with
$Z_{n}{(k,i)}$ denoting the branching process starting with the $i$th
particle in the $k$th generation. Under $\mathbb{P}_\xi$, the sequence
$(W_{n}{(k,i)})_{i\geq1}$ are independent of each other and independent of $Z_k$, and have a common
conditional distribution
$\mathbb{P}_\xi(W_{n}{(k,i)}\in\cdot)=\mathbb{P}_{T^k\xi}(W_n\in\cdot)$. Therefore,
\begin{equation}\label{CRLPEE52}
W-W_n=\frac{1}{m_0}\sum_{i=1}^{Z_1}\left(W(1,i)-W_{n-1}(1,i)\right)\qquad a.s..
\end{equation}
By (\ref{CRLPEE52}) and the convexity of $x^p$, we have
\begin{eqnarray}
\mathbb{E}_\xi|W-W_n|^p&\leq&\frac{1}{m_0^p}\mathbb{E}_\xi\left(\sum_{i=1}^{Z_1}|W(1,i)-W_{n-1}(1,i)|\right)^p\nonumber\\
&\leq&\frac{1}{m_0^p}\mathbb{E}_\xi Z_1^{p-1}\sum_{i=1}^{Z_1}|W(1,i)-W_{n-1}(1,i)|^p\nonumber\\
&=&\mathbb{E}_\xi\left(\frac{Z_1}{m_0}\right)^p\mathbb{E}_{T\xi}|W-W_{n-1}|^p.
\label{CRLPEE53}
\end{eqnarray}
Therefore for almost all $\xi$, if $T\xi\in D$, then  $\xi\in D$. So we have proved that $T^{-1}D\subset D$ a.s.. On the other hand, notice that by Theorem \ref{CRT2.1.3}, $\mathbb{E}_\xi W=1$ a.s.. Using (\ref{CRLPEE52}) and Burkholder's inequality, we get
\begin{eqnarray}
\mathbb{E}_\xi|W-W_n|^p&\geq&\frac{C}{m_0^p}\mathbb{E}_\xi\left(\sum_{i=1}^{Z_1}(W(1,i)-W_{n-1}(1,i))^2\right)^{p/2}\nonumber\\
&\geq&\frac{C}{m_0^p}\mathbb{E}_\xi \mathbf{1}_{\{Z_1\geq1\}}\sum_{i=1}^{Z_1}|W(1,i)-W_{n-1}(1,i)|^p\nonumber\\
&=&C \frac{1-p_0(\xi_0)}{m_0^p}\mathbb{E}_{T\xi}|W-W_{n-1}|^p\qquad a.s..
\label{CRLPEE54}
\end{eqnarray}
Notice that $p_0(\xi_0)<1$ since $m_0\in(0,\infty)$. It follows from (\ref{CRLPEE54}) that for almost all $\xi$, if $\xi\in D$,   then $T\xi\in D$. Hence $D\subset T^{-1}D$ a.s.. So we have proved that $T^{-1}D=D$ a.s..

 For $\rho>m^{1/2}$, assume that $\mathbb{P}(D)=1$, so that $\lim_{n\rightarrow\infty}\rho^n(\mathbb{E}_\xi|W-W_n|^p)^{1/p}=0$ a.s..  Notice that the condition $\mathbb{E}\log^-\mathbb{E}_\xi\left|\frac{Z_1}{m_0}-1\right|^{p\wedge2}<\infty$ ensures that $\mathbb{E}\log^-
\mathbb{E}_\xi\left|\frac{Z_1}{m_0}-1\right|^{p}<\infty$ and $\mathbb{E}\log^-
\mathbb{E}_\xi\left|\frac{Z_1}{m_0}-1\right|^{2}<\infty$. So we have $\mathbb{E}\left|\log
\mathbb{E}_\xi\left|\frac{Z_1}{m_0}-1\right|^{p}\right|<\infty$ and $\mathbb{E}\left|\log
\mathbb{E}_\xi\left|\frac{Z_1}{m_0}-1\right|^{2}\right|<\infty$.
Applying Theorem \ref{CRC5.5}(i) for $p\in(1,2)$ and  Theorem \ref{CRC5.5}(ii) with $r=2$ for $p\geq 2$, we get $\rho\leq m^{1/2}$. This contradicts the condition that $\rho>m^{1/2}$. Thus $\mathbb{P}(D)=0$, which implies that

$$\mathbb{P}\left(\limsup_{n\rightarrow\infty}\rho^n(\mathbb{E}_\xi|W-W_n|^p)^{1/p}>0\right)=\mathbb{P}(D^c)=1.$$
So the proof is finished.
\end{proof}

\section{Annealed moments and annealed $L^p$ convergence rate for BPRE; Proof of Theorem \ref{CRC5.9}}\label{CRLPS6}
In this section, we  consider a branching process in an $i.i.d.$ environment: we assume that $(\xi_n)_{n\geq0}$ are $i.i.d.$. We also assume  that
\begin{equation}
\mathbb{P}(W_1=1)<1,
\end{equation}
which avoids the trivial case where $W_n=1$ a.s..
\\*

 Let us study the annealed moments of  $ {\hat A_n}$ at first. We shall distinguish two cases: (i) $p\in(1,2)$; (ii) $p\geq2$. Our approach
is inspired by ideas from \cite{IK} and \cite{liu1}, especially for the case where $p\geq2$.

\subsection{Annealed moments of $\hat A_n$: case $p\geq 2$}
We first consider the case where $p\geq2$.
\begin{pr}[Annealed moments of $\hat A_n$ for $p\geq2$]\label{CRP2.4.3}
Let $p\geq2$ and $\rho\geq1$. Then $\sup_n\mathbb{E}|\hat{A}_n|^p<\infty$ if and only if
$\mathbb{E}\left(\frac{Z_1}{m_0}\right)^p<\infty$ and
 $\rho\max\{(\mathbb{E}m_0^{1-p})^{1/p}$,
 $(\mathbb{E}m_0^{-p/2})^{1/p}\}<1$.
\end{pr}

To prove Proposition \ref{CRP2.4.3} for $p>2$, we need two lemmas below.
Denote
\begin{equation}
u_n(s,r)=\mathbb{E}P_n^{-s}W_n^r\quad(s\in\mathbb{R},r>1).
\end{equation}

\begin{lem}\label{CRL2.4.1}
For $r>2$, $u_n(s,r)$ satisfies the following recursive formula:
\begin{equation}\label{CRE2.4.6}
u_n(s,r)^{\frac{1}{r-1}}\leq
(\mathbb{E}m_0^{1-r-s})^{\frac{1}{r-1}}u_{n-1}(s,r)^{\frac{1}{r-1}}
+(\mathbb{E}m_0^{-s}W_1^r)^{\frac{1}{r-1}}u_{n-1}(s,r-1)^{\frac{1}{r-1}}.
\end{equation}
\end{lem}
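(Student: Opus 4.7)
The plan is to prove the recursion via a conditional Minkowski inequality in $L^{r-1}$ (available since $r>2$ gives $r-1>1$), followed by an annealed Minkowski step and an independence calculation.

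First I would use the branching identity
$$W_n=\frac{1}{m_0}\sum_{i=1}^{Z_1}Y_i,$$
where, under $\mathbb{P}_\xi$, the $Y_i$ are i.i.d. with the law of $W_{n-1}$ under $\mathbb{P}_{T\xi}$; in particular $\mathbb{E}_{T\xi}Y_1=1$. Fixing $N=Z_1$ and setting $T:=\sum_{i=2}^NY_i$ (so that $Y_1\perp T$ under $\mathbb{P}_{T\xi}$), exchangeability yields
$$\mathbb{E}_{T\xi}\Bigl(\sum_{i=1}^N Y_i\Bigr)^{r}=N\,\mathbb{E}_{T\xi}\bigl[Y_1(Y_1+T)^{r-1}\bigr].$$

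The key algebraic step is then to rewrite
$$Y_1(Y_1+T)^{r-1}=\bigl(Y_1^{1/(r-1)}(Y_1+T)\bigr)^{r-1}=\bigl(Y_1^{r/(r-1)}+Y_1^{1/(r-1)}T\bigr)^{r-1},$$
and apply Minkowski's inequality in $L^{r-1}(\mathbb{P}_{T\xi})$ to the sum inside the $(r-1)$-th power. The first resulting norm is $(\mathbb{E}_{T\xi}Y_1^r)^{1/(r-1)}$; for the second, independence and $\mathbb{E}_{T\xi}Y_1=1$ give
$$\|Y_1^{1/(r-1)}T\|_{L^{r-1}}^{r-1}=\mathbb{E}_{T\xi}Y_1\cdot\mathbb{E}_{T\xi}T^{r-1}=\mathbb{E}_{T\xi}T^{r-1},$$
and a further Minkowski on $T=\sum_{i\geq 2}Y_i$ gives $\|T\|_{L^{r-1}}\leq(N-1)(\mathbb{E}_{T\xi}Y_1^{r-1})^{1/(r-1)}$. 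Putting the pieces together and substituting $N=Z_1=m_0W_1$ (and $N-1\leq N$), I obtain the conditional bound
$$(\mathbb{E}[W_n^r\mid\mathcal{F}_1])^{1/(r-1)}\leq \frac{W_1^{1/(r-1)}}{m_0}(\mathbb{E}_{T\xi}W_{n-1}^r)^{1/(r-1)}+W_1^{r/(r-1)}(\mathbb{E}_{T\xi}W_{n-1}^{r-1})^{1/(r-1)}.$$

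Next I would multiply both sides by $P_n^{-s/(r-1)}$, take the $L^{r-1}(\mathbb{P})$-norm, and apply Minkowski a second time to split the two terms. Since the environment is i.i.d., $(m_0,W_1)$ is independent of $T\xi$ and the subsequent branching, so each of the two resulting expectations factorizes. Using stationarity of $\tau$ to recognize $\mathbb{E}[P_{n-1}(T\xi)^{-s}\mathbb{E}_{T\xi}W_{n-1}^{r}]=u_{n-1}(s,r)$ (and the analogue with $r-1$ in place of $r$), together with $\mathbb{E}[m_0^{1-r-s}W_1]=\mathbb{E}[m_0^{-r-s}Z_1]=\mathbb{E}m_0^{1-r-s}$ (which uses $\mathbb{E}_\xi Z_1=m_0$), delivers exactly the stated coefficients $(\mathbb{E}m_0^{1-r-s})^{1/(r-1)}$ and $(\mathbb{E}m_0^{-s}W_1^r)^{1/(r-1)}$.

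The central obstacle is the regrouping $Y_1(Y_1+T)^{r-1}=(Y_1^{1/(r-1)}(Y_1+T))^{r-1}$. This is what enables the Minkowski splitting with no spurious constant: a cruder estimate such as $(a+b)^{r-1}\leq 2^{r-2}(a^{r-1}+b^{r-1})$ only yields the weaker linear inequality $u_n(s,r)\leq 2^{r-2}\bigl(\mathbb{E}m_0^{1-r-s}\,u_{n-1}(s,r)+\mathbb{E}m_0^{-s}W_1^r\,u_{n-1}(s,r-1)\bigr)$, rather than the sharp Minkowski-type form claimed. Verifying that the cross term $\|Y_1^{1/(r-1)}T\|_{L^{r-1}}$ collapses cleanly to $\|T\|_{L^{r-1}}$ through independence and $\mathbb{E}_{T\xi}Y_1=1$ is the heart of the argument; the remaining manipulations are bookkeeping via Minkowski and factorization under the i.i.d. environment.
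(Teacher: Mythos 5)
Your proof is correct, and it reaches the stated inequality by a route that is genuinely more elementary than the paper's. The paper works through size-biasing: it differentiates the functional equation for the characteristic function $\varphi_n(t)=\mathbb{E}_\xi e^{itW_n}$ to extract a distributional fixed-point identity $V_n\stackrel{d}{=}m_0^{-1}V_{n-1}^{(1)}+M_n$, where $V_n$ is the size-biased version of $W_n$ (so $\mathbb{E}_\xi W_n^r=\mathbb{E}_\xi V_n^{r-1}$) and $M_n$ is an auxiliary random variable constructed to be independent of $V_{n-1}^{(1)}$; it then applies Minkowski in $L^{r-1}(\mathbb{P})$ to the sum $P_n^{-s/(r-1)}m_0^{-1}V_{n-1}^{(1)}+P_n^{-s/(r-1)}M_n$, and bounds $\mathbb{E}_\xi M_n^{r-1}$ via a power-mean inequality. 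You obtain the same split without ever constructing $V_n$ or $M_n$: the exchangeability identity $\mathbb{E}(\sum_{i=1}^N Y_i)^r=N\,\mathbb{E}\bigl[Y_1(Y_1+T)^{r-1}\bigr]$ plays the role of the size-bias step, and your regrouping $Y_1(Y_1+T)^{r-1}=\bigl(Y_1^{r/(r-1)}+Y_1^{1/(r-1)}T\bigr)^{r-1}$ is precisely what makes the Minkowski split in $L^{r-1}$ constant-free; independence of $Y_1$ and $T$ together with $\mathbb{E}_{T\xi}Y_1=1$ collapses the cross term, just as the paper's construction of $M_n$ independent of $V_{n-1}^{(1)}$ does. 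The two proofs then coincide at the annealed factorization step, where the i.i.d. environment lets $\mathbb{E}[m_0^{1-r-s}W_1]=\mathbb{E}m_0^{1-r-s}$ and $\mathbb{E}[m_0^{-s}W_1^r]$ separate from $u_{n-1}(s,r)$ and $u_{n-1}(s,r-1)$. What your approach buys is transparency: no Fourier transforms, no auxiliary probability spaces for $V_n,M_n$, only the branching recursion and two applications of Minkowski (one conditional, one annealed). What the paper's approach buys is that the size-biased fixed-point equation is a reusable structural fact of independent interest. One small point worth making explicit in a writeup of your argument: the case $Z_1=0$ should be handled separately (then $W_n=0$ and the conditional bound holds trivially), and the estimate $Z_1-1\leq Z_1$ should be stated as a deliberate loosening rather than left implicit.
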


\begin{proof}
Denote $\varphi_n^{(k)}(t)=\mathbb{E}_{T^k\xi}e^{itW_n}$ and $\varphi_n(t)=\varphi_n^{(0)}(t)
=\mathbb{E}_{\xi}e^{itW_n}$.  By
(\ref{CRE2.4.7}), we  get the functional equation
$$\varphi_n(s) =\mathbb{E}_\xi\varphi_{n-1}^{(1)} (\frac{t}{m_0} )^{Z_1}\qquad a.s..$$
By differentiations, this yields
\begin{equation}\label{CRE2.4.8}
\varphi'_n(t)=\mathbb{E}_\xi\frac{Z_1}{m_0}\left(\varphi_{n-1}^{(1)}
(\frac{t}{m_0} )\right)^{Z_1-1}\left(\varphi_{n-1}^{(1)}
(\frac{t}{m_0} )\right)'\qquad a.s..
\end{equation}
Let $V_n$ be a random variable whose distribution is determined by
$$\mathbb{E}_\xi g(V_n)=\mathbb{E}_\xi W_ng(W_n)$$
for all bounded and measurable function $g$, and  $V_n^{(k)}$ with
$\mathbb{P}_\xi(V_n^{(k)}\in\cdot)=\mathbb{P}_{T^k\xi}(V_n\in\cdot)$. Let $M_n$ be a
random variable  independent of $V_{n-1}^{(1)}$ under $\mathbb{P}_\xi$, whose distribution  is determined by
$$\mathbb{E}_\xi g(M_n)=\mathbb{E}_\xi\frac{Z_1}{m_0}g(\frac{1}{m_0}\sum_{i=0}^{Z_1-1}W_{n-1}{(1,i)}),$$
for all  bounded and measurable function $g$. (The probability space ($\Gamma, \mathbb{P}_\xi$) can be taken large enough to define the random variables $V_n$, $V_n^{(k)}$ and $M_n$.)
The Fourier transform of $V_n$ is
$$\mathbb{E}_\xi e^{itV_n}=\mathbb{E}_\xi W_ne^{itW_n}=-i\varphi'_n(t).$$
So (\ref{CRE2.4.8}) implies that
$$\mathbb{E}_\xi e^{itV_n}=\mathbb{E}_\xi e^{it(\frac{1}{m_0}V_{n-1}^{(1)}+M_n)}\qquad a.s.,$$
which is equivalent to the distributional equation
$$V_n\stackrel{d}{=}\frac{1}{m_0}V_{n-1}^{(1)}+M_n$$
under $ \mathbb{P}_\xi$. Therefore,
\begin{eqnarray*}
u_n(s,r)=\mathbb{E}P_n^{-s}W_n^r&=&\mathbb{E}P_n^{-s}\mathbb{E}_\xi W_n^r=\mathbb{E}P_n^{-s}\mathbb{E}_\xi V_n^{r-1}\\
&=&\mathbb{E}P_n^{-s}\mathbb{E}_\xi \left(\frac{1}{m_0}V_{n-1}^{(1)}+M_n\right)^{r-1}\\
&=&\mathbb{E}\left(P_n^{-\frac{s}{r-1}}m_0^{-1}V_{n-1}^{(1)}+P_n^{-\frac{s}{r-1}}M_n\right)^{r-1}.
\end{eqnarray*}
By the triangular inequality in $L^{r-1}$,
\begin{equation}\label{CRE2.4.9}
u_n(s,r)^{\frac{1}{r-1}}\leq
\left(\mathbb{E}P_n^{-s}m_0^{1-r}\left(V_{n-1}^{(1)}\right)^{r-1}\right)^{\frac{1}{r-1}}
+\left(\mathbb{E}P_n^{-s}M_n^{r-1}\right)^{\frac{1}{r-1}}.
\end{equation}
We now calculate the two expectations of the right hand side. We have
\begin{eqnarray}\label{CRE2.4.10}
\mathbb{E}P_n^{-s}m_0^{1-r}\left(V_{n-1}^{(1)}\right)^{r-1}&=&\mathbb{E}P_n^{-s}m_0^{1-r}\mathbb{E}_{T\xi}V_{n-1}^{r-1}\nonumber\\
&=&\mathbb{E}m_0^{1-r-s}\mathbb{E}P_n^{-s}V_{n-1}^{r-1}\nonumber\\
&=&\mathbb{E}m_0^{1-r-s}u_{n-1}(s,r),
\end{eqnarray}
and
\begin{eqnarray}\label{CRE2.4.11}
\mathbb{E}P_n^{-s}M_n^{r-1}&=&\mathbb{E}P_n^{-s}\mathbb{E}_\xi M_n^{r-1}\nonumber\\
&=&\mathbb{E}P_n^{-s}\mathbb{E}_\xi\frac{Z_1}{m_0}\left(\frac{1}{m_0}\sum_{i=0}^{Z_1-1}W_{n-1}{(1,i)}\right)^{r-1}\nonumber\\
&\leq&\mathbb{E}P_n^{-s}m_0^{-r}\mathbb{E}_\xi Z_1^{r}\mathbb{E}_{T\xi}W_{n-1}^{r-1}\nonumber\\
&=&\mathbb{E}m_0^{-s} \left(\frac{Z_1}{m_0} \right)^r\mathbb{E}P_{n-1}^{-s}W_{n-1}^{r-1}\nonumber\\
&=&\mathbb{E}m_0^{-s} \left(\frac{Z_1}{m_0} \right)^ru_{n-1}(s,r-1).
\end{eqnarray}
So (\ref{CRE2.4.6}) is a combination of
(\ref{CRE2.4.9}), (\ref{CRE2.4.10}) and (\ref{CRE2.4.11}).
\end{proof}

\noindent\textbf{Remark}. In particular, $u_n(0,r)=\mathbb{E}W_n^r$. By Lemma
\ref{CRL2.4.1}, we can obtain the recursive formula for $\mathbb{E}W_n^r$:
$$(\mathbb{E}W_n^r)^{\frac{1}{r-1}}\leq
(\mathbb{E}m_0^{r-1})^{\frac{1}{r-1}}+(\mathbb{E}W_1^r)^{\frac{1}{r-1}}(\mathbb{E}W_{n-1}^{r-1})^{\frac{1}{r-1}}\quad(r>2).$$
 \\*

\begin{lem}\label{CRL2.4.2} Let $s\in\mathbb{R}$ and $r\in(b,b+1]$, where $b\geq 1$ is an integer.
If $\mathbb{E}m_0^{-s}<\infty$ and
$\mathbb{E}m_0^{-s}\left(\frac{Z_1}{m_0}\right)^r<\infty$, then
$$u_n(s,r)=O(n^{1+(b-1)r-(b-1)b/2}(\max\{\max_{1\leq i\leq b}\mathbb{E}m_0^{i-r-s},\;\mathbb{E}m_0^{-s}\})^n).$$
\end{lem}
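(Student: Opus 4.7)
\textbf{Proof proposal for Lemma \ref{CRL2.4.2}.}
The plan is to proceed by induction on the integer $b\ge 1$, using the recursive inequality from Lemma \ref{CRL2.4.1} as the engine. Write $e_b:=1+(b-1)r-(b-1)b/2$ and $c_b:=\max\{\max_{1\le i\le b}\mathbb{E} m_0^{i-r-s},\,\mathbb{E} m_0^{-s}\}$. The goal is to show $u_n(s,r)=O(n^{e_b}c_b^n)$ whenever $r\in(b,b+1]$.

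For the base case $b=1$, so $r\in(1,2]$ and $r-1\in(0,1]$, I would use Jensen's inequality in the form $\mathbb{E}_\xi W_n^{r-1}\le (\mathbb{E}_\xi W_n)^{r-1}=1$ to obtain $u_{n-1}(s,r-1)\le (\mathbb{E} m_0^{-s})^{n-1}$. Plugging this into Lemma \ref{CRL2.4.1} gives, setting $v_n=u_n(s,r)^{1/(r-1)}$, $A=(\mathbb{E} m_0^{1-r-s})^{1/(r-1)}$ and $\rho=c_1^{1/(r-1)}\ge A$,
\begin{equation*}
v_n\le A\,v_{n-1}+C_0\,\rho^{\,n-1},
\end{equation*}
which unfolds to $v_n\le \rho^n v_0+nC_0\rho^{n-1}=O(n\rho^n)$, i.e.\ $u_n(s,r)=O(n^{r-1}c_1^n)=O(n\,c_1^n)$ since $r-1\le 1$.

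For the inductive step, assume the bound at level $b-1$; in particular $u_n(s,r-1)=O(n^{e_{b-1}}c_{b-1}^n)$, and note that $c_{b-1}\le c_b$ by the index shift $j=i+1$, while $A^{r-1}=\mathbb{E} m_0^{1-r-s}\le c_b$ (it is the $i=1$ entry). Set again $v_n=u_n(s,r)^{1/(r-1)}$ and $\rho=c_b^{1/(r-1)}$. Lemma \ref{CRL2.4.1} together with the inductive bound yields
\begin{equation*}
v_n\le \rho\,v_{n-1}+C_1\,(n-1)^{e_{b-1}/(r-1)}\,\rho^{\,n-1}.
\end{equation*}
Iterating and using $\sum_{k=0}^{n-1}k^{e_{b-1}/(r-1)}=O\bigl(n^{1+e_{b-1}/(r-1)}\bigr)$ gives $v_n=O\bigl(n^{1+e_{b-1}/(r-1)}\rho^n\bigr)$, hence
\begin{equation*}
u_n(s,r)=v_n^{r-1}=O\bigl(n^{(r-1)+e_{b-1}}\,c_b^n\bigr).
\end{equation*}
The induction then closes provided $(r-1)+e_{b-1}=e_b$, which is a direct algebraic check: $e_b-e_{b-1}=(b-1)r-(b-2)(r-1)-\tfrac{(b-1)b}{2}+\tfrac{(b-2)(b-1)}{2}=r-1$.

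The only delicate point I anticipate is bookkeeping the polynomial exponent through the recursion: one must use $c_{b-1}\le c_b$ to absorb the lower-level geometric rate into $c_b$ (so that the induction runs with a \emph{single} exponential base), and use that $A\le\rho$ so that the recursion for $v_n$ is truly ``resonant'' of rate $\rho$, which is exactly what produces the extra polynomial factor $n^{1+e_{b-1}/(r-1)}$ after iteration. The base case and the arithmetic identity $e_b=e_{b-1}+(r-1)$ are routine; everything else is a geometric-sum estimate.
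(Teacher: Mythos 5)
Your inductive step is essentially the paper's: you apply Lemma \ref{CRL2.4.1}, shift the index to absorb $c_{b-1}$ into $c_b$, note $A\le\rho$, iterate a recursion of the form $v_n\le\rho\,v_{n-1}+O(n^{\gamma}\rho^{n-1})$, and close the induction with the identity $e_b=e_{b-1}+(r-1)$. That part is sound (modulo the small omission that you should also check, as the paper does by H\"older, that $\mathbb{E}m_0^{-s}(Z_1/m_0)^{r-1}<\infty$ so that the inductive hypothesis actually applies to the argument $r-1$).

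The genuine gap is the base case. For $b=1$ you have $r\in(1,2]$, and you plug into Lemma \ref{CRL2.4.1} to get a recursion for $v_n=u_n(s,r)^{1/(r-1)}$. But Lemma \ref{CRL2.4.1} is stated only for $r>2$, and its proof genuinely requires $r>2$: it uses the Minkowski (triangle) inequality in $L^{r-1}$, which needs $r-1\ge1$, and it uses the convexity of $x\mapsto x^{r-1}$ to bound $\mathbb{E}P_n^{-s}M_n^{r-1}$, which again needs $r-1\ge1$. For $r\in(1,2)$ the map $x\mapsto x^{r-1}$ is concave and $L^{r-1}$ is not a normed space, so neither step survives, and the inequality you are invoking is simply not available. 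Consequently the induction never gets off the ground. The paper avoids this by proving the base case directly from Burkholder's inequality: for $r\in(1,2]$ one gets $\mathbb{E}_\xi|W_n-1|^r\le C\sum_{k=0}^{n-1}P_k^{1-r}\mathbb{E}_\xi|\bar X_k-1|^r$ a.s., and then integrating $P_n^{-s}$ against this and using independence of the environment yields $u_n(s,r)\le(\mathbb{E}m_0^{-s})^n+Cn\max\{\mathbb{E}m_0^{1-r-s},\mathbb{E}m_0^{-s}\}^{n-1}$, which is exactly the claimed bound at level $b=1$. You should replace your base-case argument by this Burkholder computation; the rest of your proof then goes through.
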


\begin{proof}
We shall prove this lemma by induction on $b$. For $b=1$, let
$r\in(1,2]$. By Burkholder's inequality,
\begin{eqnarray*}
\mathbb{E}_\xi W_n^r&\leq&1+\sup_n\mathbb{E}_\xi|W_n-1|^r\\
&\leq&1+C\sum_{k=0}^{n-1}P_k^{1-r}\mathbb{E}_\xi|\bar{X}_n-1|^r\quad a.s..
\end{eqnarray*}
Hence
\begin{eqnarray*}
u_n(s,r)&=&\mathbb{E}P_n^{-s}\mathbb{E}_\xi W_n^r\\
&\leq&\mathbb{E}P_n^{-s}\left(1+C\sum_{k=0}^{n-1}P_k^{1-r}\mathbb{E}_\xi|\bar{X}_n-1|^r\right)\\
&=&\mathbb{E}P_n^{-s}+C\sum_{k=0}^{n-1}\mathbb{E}P_n^{-s}P_k^{1-r}\mathbb{E}_\xi|\bar{X}_n-1|^r\\
&=&(\mathbb{E}m_0^{-s})^n+C\sum_{k=0}^{n-1}(\mathbb{E}m_0^{1-r-s})^{k}(\mathbb{E}m_0^{-s})^{n-k-1}\mathbb{E}m_0^{-s}|\bar{X}_0-1|^r\\
&\leq&(\mathbb{E}m_0^{-s})^n+Cn\max\{\mathbb{E}m_0^{1-r-s},\;\mathbb{E}m_0^{-s}\}^{n-1}\\
&=&O(n(\max\{\mathbb{E}m_0^{1-r-s},\;\mathbb{E}m_0^{-s}\})^{n}).
\end{eqnarray*}
So the conclusion holds for $b=1$.

Now we assume that the conclusion is true for $r\in(b,b+1]$ for some integer $b\geq1$. Then
for $r\in(b+1,b+2]$, $r-1\in(b,b+1]$. By H\"older's inequality,
\begin{eqnarray*}
\mathbb{E}m_0^{-s}\left(\frac{Z_1}{m_0}\right)^{r-1}&=&\mathbb{E}m_0^{-s/r}m_0^{-s(r-1)/r}\left(\frac{Z_1}{m_0}\right)^{r-1}\\
&\leq&(\mathbb{E}m_0^{-s})^{1/r}\left(\mathbb{E}m_0^{-s}\left(\frac{Z_1}{m_0}\right)^{r}\right)^{(r-1)/r},
\end{eqnarray*}
which implies that $\mathbb{E}m_0^{-s}(\frac{Z_1}{m_0})^{r-1}<\infty$, since
$\mathbb{E}m_0^{-s}<\infty$ and $\mathbb{E}m_0^{-s}(\frac{Z_1}{m_0})^{r}<\infty$. By
the induction assumption,
\begin{eqnarray}\label{CRE2.4.12}
u_{n-1}(s,r-1)&=&O((n-1)^{1+(b-1)(r-1)-(b-1)b/2}(\max\{\max_{1\leq i\leq b}\mathbb{E}m_0^{i+1-r-s},\;\mathbb{E}m_0^{-s}\})^{n-1})\nonumber\\
&=&O(n^{1+(b-1)(r-1)-(b-1)b/2}(\max\{\max_{2\leq i\leq
b+1}\mathbb{E}m_0^{i-r-s},\;\mathbb{E}m_0^{-s}\})^{n}).
\end{eqnarray}
It is easy to verify that any solution to the recursive inequality
\begin{equation}\label{CRE2.4.13}
c_n\leq\alpha
c_{n-1}+O(n^\gamma\beta^n)\quad(\alpha,\beta,\gamma\geq0)
\end{equation}
satisfies $c_n=O(n^{\gamma+1}\max\{\alpha,\beta\}^n)$. Lemma
\ref{CRL2.4.1} and (\ref{CRE2.4.12}) show that
$u_n(s,r)^{\frac{1}{r-1}}$ is a solution of (\ref{CRE2.4.13}) with
$\alpha=(\mathbb{E}m_0^{1-r-s})^{\frac{1}{r-1}}$, $\beta=\max\{\max_{2\leq i\leq
b+1}(\mathbb{E}m_0^{i-r-s})^{\frac{1}{r-1}}, (\mathbb{E}m_0^{-s})^{\frac{1}{r-1}}\}$
and $\gamma=\frac{1+(b-1)(r-1)-(b-1)b/2}{r-1}$. Thus
\begin{equation}\label{CRE2.4.14}
u_n(s,r)^{\frac{1}{r-1}}=O(n^{\gamma+1}\max\{\alpha,\beta\}^n).
\end{equation}
Notice that $\gamma+1=\frac{1+ b r-b(b+1)/2}{r-1} $ and
\begin{eqnarray*}
\max\{\alpha,\beta\}=
\max\{\max_{1\leq i\leq
b+1}(\mathbb{E}m_0^{i-r-s})^{\frac{1}{r-1}},(\mathbb{E}m_0^{-s})^{\frac{1}{r-1}}\}.
\end{eqnarray*}
Hence (\ref{CRE2.4.14}) becomes
$$u_n(s,r)=O(n^{1+ b r-b(b+1)/2}(\max\{\max_{1\leq i\leq b+1}\mathbb{E}m_0^{i-r-s},  \;\mathbb{E}m_0^{-s} \})^n).$$
So the conclusion still holds for $r\in(b+1,b+2]$. This completes the
proof.
\end{proof}

\noindent\textbf{Remark}. In Lemma \ref{CRL2.4.2},  since
$1-(b-1)b/2\leq0$ for $b\geq2$, we in fact obtain
$$u_n(s,r)=O(n(\max\{\mathbb{E}m_0^{1-r-s}, \;\mathbb{E}m_0^{-s} \})^n)\quad \text{for}\;r\in(1,2],$$
and for any integer $b\geq1$,
$$u_n(s,r)=O(n^{br}(\max\{\max_{1\leq i\leq b+1}\mathbb{E}m_0^{i-r-s}, \;\mathbb{E}m_0^{-s} \})^n)\quad \text{for}\;r\in(b+1,b+2].$$
\\*

\begin{proof}[Proof of Proposition \ref{CRP2.4.3}]
For $p=2$, by Lemmas \ref{CRL1.3.3} and \ref{CRL1.4.1},
\begin{eqnarray}\label{CRE2.4.5}
\sup_n\mathbb{E}|\hat{A}_n|^2
=\sum_{n=0}^{\infty}\rho^{2n}\mathbb{E}(P_n^{-1}\mathbb{E}_\xi|\bar{X}_n-1|^2)=\mathbb{E}|\bar{X}_0-1|^2\sum_{n=0}^{\infty}\left(\rho^{2}\mathbb{E}m_0^{-1}
\right)^n.
\end{eqnarray}
Therefore, $\sup_n\mathbb{E}|\hat{A}_n|^2<\infty$ if and only if
$\mathbb{E}(\frac{Z_1}{m_0})^2<\infty$ and $\rho(\mathbb{E}m_0^{-1})^{1/2}<1$.

Now we consider the case where $p>2$.
Assume that $\mathbb{E}\left(\frac{Z_1}{m_0}\right)^p<\infty$ and
$\rho\max\{(\mathbb{E}m_0^{1-p})^{1/p}$,  $(\mathbb{E}m_0^{-p/2})^{1/p}\}$ $<1$. By
Lemma \ref{CRL1.3.3},
$$\sup_n\mathbb{E}|\hat{A}_n|^p\leq C\left(\sum_{n=0}^{\infty}\rho^{2n}(\mathbb{E}|W_{n+1}-W_n|^p)^{2/p}\right)^{p/2}.$$
To prove $\sup_n\mathbb{E}|\hat{A}_n|^p<\infty$, it suffices to show that
$$\sum_{n=0}^{\infty}\rho^{2n}(\mathbb{E}|W_{n+1}-W_n|^p)^{2/p}<\infty.$$
By Lemma \ref{CRL1.4.1},
\begin{eqnarray*}
\mathbb{E}|W_{n+1}-W_n|^p&\leq&C \mathbb{E}P_n^{-p/2}\mathbb{E}_\xi W_n^{p/2}\mathbb{E}_\xi|\bar{X}_n-1|^p\\
&=&C \mathbb{E}P_n^{-p/2}  W_n^{p/2}\mathbb{E}|\bar{X}_0-1|^p\\
&=&Cu_n(p/2,p/2).
\end{eqnarray*}
Notice that
$$\mathbb{E}m_0^{-p/2}\left(\frac{Z_1}{m_0}\right)^{p/2}=\mathbb{E}m_0^{-p}Z_1^{p/2}\mathbf{1}_{\{Z_1\geq1\}}
\leq\mathbb{E}m_0^{-p}Z_1^{p}\mathbf{1}_{\{Z_1\geq1\}}\leq\mathbb{E}\left(\frac{Z_1}{m_0}\right)^p<\infty,$$
and $\mathbb{E}m_0^{-p/2}<1<\infty$. The
remark after Lemma \ref{CRL2.4.2} shows that
$$u_n(p/2,p/2)=O(n^{\gamma}(\max\{\max_{1\leq i\leq b+1}\mathbb{E}m_0^{i-p},
\mathbb{E}m_0^{-p/2} \})^n)$$ for $p/2\in(b+1,b+2]$ with $\gamma=1$ for $b=0$
and $\gamma=bp/2$ for $b\geq1$. Notice that $\mathbb{E}m_0^x$ is $\log$
convex. Therefore we have
$$\max\{\max_{1\leq i\leq b+1}\mathbb{E}m_0^{i-p}, \;\mathbb{E}m_0^{-p/2} \}
\leq\sup_{1-p\leq x\leq
-p/2}\{\mathbb{E}m_0^x\}=\max\{\mathbb{E}m_0^{1-p}, \;\mathbb{E}m_0^{-p/2}\}.$$ Thus
$$\sum_{n=0}^{\infty}\rho^{2n}(\mathbb{E}|W_{n+1}-W_n|^p)^{2/p}
\leq
C\sum_{n=0}^{\infty}\rho^{2n}n^{2\gamma/p}(\max\{(\mathbb{E}m_0^{1-p})^{2/p},(\mathbb{E}m_0^{-p/2)^{2/p}}\})^n.$$
The series in the right side of the above inequality is finite if and only if
$\rho\max\{(\mathbb{E}m_0^{1-p})^{1/p}$, $(\mathbb{E}m_0^{-p/2})^{1/p}\}$$<1$.

Conversely, assume that $\sup_n\mathbb{E}|\hat{A}_n|^p<\infty$. Obviously,
$\mathbb{E}\left(\frac{Z_1}{m_0}\right)^p<\infty$, since
$\mathbb{E}|\frac{Z_1}{m_0}-1|^p=\mathbb{E}|\hat{A}_0|^p<\infty$. By Lemma
\ref{CRL1.3.3} and Lemma \ref{CRL1.4.1}, we have $\forall r\in[2,p]$,
\begin{eqnarray*}
\sup_n\mathbb{E}|\hat{A}_n|^p&\geq& C\sum_{n=0}^{\infty}\rho^{pn}\mathbb{E}|W_{n+1}-W_n|^p\\
&\geq&C\sum_{n=0}^{\infty}\rho^{pn}\mathbb{E}P_n^{p(1/r-1)}(\mathbb{E}_\xi|\bar{X}_n-1|^r)^{p/r}\\
&=&C\sum_{n=0}^{\infty}\rho^{pn}(\mathbb{E}m_0^{p(1/r-1)})^n\mathbb{E}(\mathbb{E}_\xi|\bar{X}_0-1|^r)^{p/r}.
\end{eqnarray*}
Thus $\rho(\mathbb{E}m_0^{p(1/r-1)})^{1/p}<1$ holds for all $r\in[2,p]$.
Taking $r=p,2$, we get $\rho\max\{(\mathbb{E}m_0^{1-p})^{1/p}$,
$(\mathbb{E}m_0^{-p/2})^{1/p}\}<1$.
\end{proof}

\subsection{Annealed moments of $\hat A_n$: case $p\in(1,2)$}
For the case where $p\in(1,2)$, we have the proposition below.
\begin{pr}[Annealed moments of $\hat A_n$ for $p\in(1, 2)$]\label{CRP2.4.1}
Let $p\in(1,2)$ and $\rho\geq1$.
 If
$\mathbb{E}\left(\mathbb{E}_\xi\left(\frac{Z_1}{m_0}\right)^r\right)^{p/r}<\infty$ and $\rho(\mathbb{E}m_0^{p(1/r-1)})^{1/p}<1$ for some $ r\in[p,2]$,
then
\begin{equation}\label{CREE6.1}
\text{$\sup_n\mathbb{E}|\hat{A}_n|^p<\infty$.}
\end{equation}
Conversely, if (\ref{CREE6.1}) holds, then $\mathbb{E}(\frac{Z_1}{m_0})^p<\infty$ and $\rho(\mathbb{E }m_0^s)^{-1/2s}<1$ for all $ s>0$, so that $\rho\leq \exp({\frac{1}{2}\mathbb{E}\log m_0})$; if additionally
$\mathbb{E}m_0^{-p/2}\log m_0>0$ and $\mathbb{E}m_0^{-p/2-1}Z_1\log^+Z_1<\infty$,
then $\rho(\mathbb{E}m_0^{-p/2})^{1/p}<1$.
\end{pr}

\begin{proof}
Suppose that
$\mathbb{E}\left(\mathbb{E}_\xi\left(\frac{Z_1}{m_0}\right)^r\right)^{p/r}<\infty$ and
$\rho(\mathbb{E}m_0^{p(1/r-1)})^{1/p}<1$ for some $r\in [p, 2]$. By Lemma
\ref{CRL1.4.1},
$$\mathbb{E}_\xi|W_{n+1}-W_n|^p\leq CP_n^{p(1/r-1)}(\mathbb{E}_\xi|\bar{X}_n-1|^r)^{p/r}.$$
Taking expectation we obtain
\begin{equation}\label{CRE2.4.1}
\mathbb{E}|W_{n+1}-W_n|^p\leq C (\mathbb{E}m_0^{p(1/r-1)}
)^n\mathbb{E}(\mathbb{E}_\xi|\bar{X}_0-1|^r)^{p/r}.
\end{equation}
Notice that
$$\mathbb{E}(\mathbb{E}_\xi|\bar{X}_0-1|^r)^{p/r}\leq C\left(\mathbb{E}\left(\mathbb{E}_\xi\left(\frac{Z_1}{m_0} \right)^r\right)^{p/r}+1\right)<\infty.$$
 By Lemma \ref{CRL1.3.3} and (\ref{CRE2.4.1}),
\begin{eqnarray*}
\sup_n\mathbb{E}|\hat{A}_n|^p&\leq& C\sum_{n=0}^{\infty}\rho^{pn}\mathbb{E}|W_{n+1}-W_n|^p\\
&\leq&C\mathbb{E}\left(\mathbb{E}_\xi|\bar{X}_0-1|^r\right)^{p/r}\sum_{n=0}^{\infty}\rho^{pn}(\mathbb{E}m_0^{p(1/r-1)})^n<\infty.
\end{eqnarray*}

Conversely, assume that $\sup_n\mathbb{E}|\hat{A}_n|^p<\infty$. It is
obvious that $\mathbb{E}|\frac{Z_1}{m_0}-1|^p=\mathbb{E}|\hat{A}_0|^p<\infty$. By
Lemmas \ref{CRL1.3.3} and \ref{CRL1.4.1}, we have $\forall N\geq1$,
\begin{eqnarray}\label{CRE2.4.2}
\sup_n\mathbb{E}|\hat{A}_n|^p&\geq& CN^{p/2-1}\sum^{N-1}_{n=0}\rho^{pn}\mathbb{E}|W_{n+1}-W_n|^p\nonumber\\
&\geq&CN^{p/2-1}\sum^{N-1}_{n=0}\rho^{pn}\mathbb{E}P_n^{-p/2}\mathbb{E}_\xi W_n^{p/2}\mathbb{E}_\xi|\bar{X}_n-1|^p\nonumber\\
&=&CN^{p/2-1}\sum^{N-1}_{n=0}\rho^{pn}\mathbb{E}P_n^{-p/2}W_n^{p/2}\mathbb{E}|\bar{X}_0-1|^p.
\end{eqnarray}
The assumption $\mathbb{P}(W_1=1)<1$ ensures that $\mathbb{E}|\bar{X}_0-1|^p>0$.
For  $\alpha>0$, H\"older's inequality gives
\begin{equation}\label{CRE2.4.3}
\mathbb{E}W_n^\alpha=\mathbb{E}W_n^\alpha P_n^{-\alpha}P_n^{\alpha}\leq( \mathbb{E}W_n^{\alpha
p_1} P_n^{-\alpha p_1})^{1/p_1}(\mathbb{E}P_n^{\alpha q_1})^{1/q_1},
\end{equation}
where $p_1,q_1>1$ and $1/p_1+1/q_1=1$.  For $s>0$, take
$\alpha=\frac{sp}{p+2s}$, $p_1=1+p/2s$ and $q_1=1+2s/p$. Then
(\ref{CRE2.4.3}) becomes
\begin{equation}\label{CRE2.4.4}
(\mathbb{E}W_n^\alpha)^{p_1}\leq \mathbb{E}W_n^{p/2}P_n^{-p/2}(\mathbb{E}m_0^s)^{pn/2s}.
\end{equation}
Combing (\ref{CRE2.4.4}) with  (\ref{CRE2.4.2}), we get
\begin{eqnarray*}
\sup_n\mathbb{E}|\hat{A}_n|^p&\geq& CN^{p/2-1}\sum^{N-1}_{n=0}\rho^{pn}
(\mathbb{E}m_0^s)^{-pn/2s}(\mathbb{E}W_n^\alpha)^{p_1}\\
&\geq&
C(\inf_n\mathbb{E}W_n^\alpha)^{p_1}N^{p/2-1}\sum^{N-1}_{n=0}\left(\rho^{p}
(\mathbb{E}m_0^s)^{-p/2s}\right)^n.
\end{eqnarray*}
Hence $\sup_n\mathbb{E}|\hat{A}_n|^p<\infty$ implies that $\rho(E
m_0^s)^{-1/2s}<1$ for all $ s>0$, so that $\log
\rho<\frac{1}{2s}\log \mathbb{E}m_0^s$ for all $ s>0$. Notice that
$(\mathbb{E}m_0^s)^{1/s}$ is increasing as $s$ increases. We have
$$\log \rho\leq\inf_{s>0}\frac{1}{2s}\log \mathbb{E}m_0^s=\frac{1}{2}\lim_{s\rightarrow0^+}\frac{1}{s}\log(\mathbb{E}m_0^s)=\frac{1}{2}\mathbb{E}\log m_0,$$
so that $\rho\leq\exp({\frac{1}{2}\mathbb{E}\log m_0})$.

If additionally $\mathbb{E}m_0^{-p/2}\log m_0>0$ and $\mathbb{E}m_0^{-p/2-1}Z_1\log^+Z_1<\infty$, we introduce a new BPRE. Denote the distribution of $\xi_0$ by $\tau_0$. Define a new distribution $\tilde{\tau}_0$ as
$$\tilde{\tau}_0(dx)=\frac{m(x)^{-p/2}\tau_0(dx)}{\mathbb{E}m_0^{-p/2}},$$
where $m(x)=\mathbb{E}[Z_1|\xi_0=x]=\sum_{k=0}^{\infty}kp_k(x)$.  Consider the new BPRE whose environment
distribution
is $\tilde{\tau}=\tilde{\tau}_0^{\otimes \mathbb N_0}$ instead of $\tau=\tau_0^{\otimes \mathbb N_0}$. The corresponding probability and
expectation are denoted by
$\tilde{\mathbb{P}}=\mathbb{P}_\xi \otimes \tilde \tau$ and $\tilde{\mathbb{E}}$, respectively. Then
\begin{equation}\label{CRE8.5}
\mathbb{E}P_n^{-p/2}W_n^{p/2}=\tilde{\mathbb{E}}W_n^{p/2}(\mathbb{E}m_0^{-p/2})^{n}.
\end{equation}
Combing (\ref{CRE8.5}) with (\ref{CRE2.4.2}), we obtain
$$\sup_n\mathbb{E}|\hat{A}_n|^p\geq C\inf_n\tilde{\mathbb{E}}W_n^{p/2} N^{p/2-1}\sum^{N-1}_{n=0}\left(\rho^{p}\mathbb{E}m_0^{-p/2} \right)^n.$$
Notice that
$$\tilde{\mathbb{E}}\log m_0=\mathbb{E}m_0^{-p/2}\log m_0>0,$$
and
$$\tilde{\mathbb{E}}\frac{Z_1}{m_0}\log^+ Z_1=\mathbb{E}m_0^{-p/2-1}Z_1\log^+Z_1<\infty.$$
Hence $W$ is non-degenerate under $\tilde{\mathbb{P}}$, i.e. $\tilde{\mathbb{P}}(W>0)>0$ (cf. e.g. \cite{a}, \cite{tanny2}), so that $\inf_n\tilde{\mathbb{E}}W_n^{p/2}=\tilde{\mathbb{E}}W^{p/2}>0$. Therefore, $\sup_n\mathbb{E}|\hat{A}_n|^p<\infty$ implies that $\rho(\mathbb{E}m_0^{-p/2})^{1/p}<1$.
\end{proof}

\subsection{Exponential  rate  of  $W_n$}
Again, by the relations of $\hat A_n$, $A(\rho)$ and $\rho^n(W-W_n)$ , combined with  Propositions \ref{CRP2.4.3} and \ref{CRP2.4.1},  we obtain  the following criteria for the  annealed  $L^p$  convergence rate of $W_n$.
\begin{thm}[Exponential  rate of annealed $L^p$ convergence of  $W_n$]\label{CRC5.8}
 Let  $\rho>1$.
\begin{itemize}
\item[] (i) Let $p\in(1,2)$. If $\mathbb{E}\left(\mathbb{E}_\xi\left(\frac{Z_1}{m_0}\right)^r\right)^{p/r}<\infty$ and
$\rho(\mathbb{E}m_0^{p(1/r-1)})^{1/p}<1$ for some $r\in[p,2]$, then
\begin{equation}\label{CRE2.1.2}
(\mathbb{E}|W-W_n|^p)^{1/p}=o(\rho^{-n}).
\end{equation}
Conversely, if (\ref{CRE2.1.2}) holds, then $\rho\leq \exp({\frac{1}{2}\mathbb{E}\log m_0})$; if additionally $\mathbb{E}m_0^{-p/2}\log m_0>0$ and $\mathbb{E}m_0^{-p/2-1}Z_1\log^+Z_1<\infty $, then $\rho(\mathbb{E}m_0^{-p/2})^{1/p}\leq1$.\\

\item[] (ii) Let $p\geq2$. If
$\mathbb{E}(\frac{Z_1}{m_0})^p<\infty\;\text{ and }\;
\rho\max\{(\mathbb{E}m_0^{1-p})^{1/p},(\mathbb{E}m_0^{-p/2})^{1/p}\}<1$, then
(\ref{CRE2.1.2}) holds. Conversely, if (\ref{CRE2.1.2}) holds, then  $
\rho\max\{(\mathbb{E}m_0^{1-p})^{1/p},(\mathbb{E}m_0^{-p/2})^{1/p}\}\leq1$.
\end{itemize}
\end{thm}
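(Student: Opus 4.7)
My plan is to derive Theorem \ref{CRC5.8} directly from Theorem \ref{CRT2.1.6} by exploiting the simple algebraic identity $A_n(\rho)-A_{n-1}(\rho)=\rho^{n}(W-W_{n})$, which links the $L^p$ behavior of $(W-W_n)$ to the $L^p$ convergence of the series $A(\rho)$.

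For the forward (sufficiency) direction of both (i) and (ii), I would simply observe that the hypotheses on the moments and on $\rho$ are exactly the sufficient conditions of Theorem \ref{CRT2.1.6}. Hence $A(\rho)=\sum_{n\ge 0}\rho^n(W-W_n)$ converges in $L^p$ under $\mathbb{P}$, which forces $\|A_n-A_{n-1}\|_p=\rho^n\|W-W_n\|_p\to 0$, i.e.\ $(\mathbb{E}|W-W_n|^p)^{1/p}=o(\rho^{-n})$.

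For the converse direction I would show that whenever (\ref{CRE2.1.2}) holds and $\rho_1\in(1,\rho)$, the series $A(\rho_1)$ converges in $L^p$. By the triangular inequality and geometric summation,
\begin{equation*}
\sum_{n\ge 0}\rho_1^n\|W-W_n\|_p
 = \sum_{n\ge 0}\Bigl(\frac{\rho_1}{\rho}\Bigr)^n\bigl(\rho^n\|W-W_n\|_p\bigr)<\infty,
\end{equation*}
since $\rho_1/\rho<1$ and the factor in parentheses tends to $0$. Thus the partial sums of $A(\rho_1)$ form a Cauchy sequence in $L^p$ and converge in $L^p$. Applying Theorem \ref{CRT2.1.6} to $\rho_1$ in place of $\rho$ then yields the corresponding moment/spectral conditions with strict inequality: in case (i), $\rho_1\le\exp(\tfrac12\mathbb{E}\log m_0)$ (and, under the extra integrability assumptions, $\rho_1(\mathbb{E}m_0^{-p/2})^{1/p}<1$); in case (ii), $\rho_1\max\{(\mathbb{E}m_0^{1-p})^{1/p},(\mathbb{E}m_0^{-p/2})^{1/p}\}<1$.

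The final step is to let $\rho_1\uparrow\rho$. Because the quantities $\exp(\tfrac12\mathbb{E}\log m_0)$, $(\mathbb{E}m_0^{-p/2})^{1/p}$, and $(\mathbb{E}m_0^{1-p})^{1/p}$ are independent of $\rho_1$, passing to the limit converts each strict inequality into the weak inequality stated in the theorem, with ``$\le 1$'' (resp.\ ``$\le\exp(\tfrac12\mathbb{E}\log m_0)$'') in place of ``$<1$''. The only mild obstacle is bookkeeping the different cases for $p\in(1,2)$ and $p\ge 2$ and tracking which auxiliary moment hypotheses (e.g.\ $\mathbb{E}m_0^{-p/2-1}Z_1\log^+Z_1<\infty$) are needed to invoke the sharp converse part of Theorem \ref{CRT2.1.6}; once these are lined up, the argument is just a transfer lemma between the $L^p$ convergence of $A(\rho)$ and the $o(\rho^{-n})$ decay of $\|W-W_n\|_p$, together with the continuity of the threshold conditions in $\rho$.
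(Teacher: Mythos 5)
Your proposal is correct and takes essentially the same route as the paper: the paper also derives Theorem \ref{CRC5.8} directly from Theorem \ref{CRT2.1.6} by using that $L^p$ convergence of $A(\rho)$ forces $\rho^n\|W-W_n\|_p\to 0$, and conversely that $\rho^n\|W-W_n\|_p\to 0$ gives $L^p$ convergence of $A(\rho_1)$ for any $\rho_1\in(1,\rho)$, then letting $\rho_1\uparrow\rho$. Your write-up simply makes explicit the Cauchy/geometric-series bookkeeping that the paper states in a single sentence.
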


Note that
$\rho^{pn}\mathbb{E}|W-W_n|^p\rightarrow0$ implies that
$\forall \rho_1\in(1,\rho)$,
$\rho_1^{pn}\mathbb{E}_\xi|W-W_n|^p\rightarrow0$ a.s. by Borel-Cantelli's lemma and Markov's inequality. So under the conditions of Theorem \ref{CRC5.8},
we can also obtain (\ref{CRE2.1.1}). However,   by
Jensen's inequality,  it can be seen that  the conditions
of Theorem \ref{CRC5.8} are stronger than those of Theorem \ref{CRC5.5}.
 \\*

The proof of Theorem \ref{CRC5.9} is now easy.

\begin{proof}[Proof of Theorem \ref{CRC5.9}]
 Theorem \ref{CRC5.9} is a direct consequence of Theorem \ref{CRC5.8}: taking $r=p$ in Theorem \ref{CRC5.8} gives (a), and taking $r=2$ yields (b).
\end{proof}

\end{document}